\documentclass[11pt,american,preprint]{elsarticle}
\makeatletter
\def\ps@pprintTitle{%
 \let\@oddhead\@empty
 \let\@evenhead\@empty
 \def\@oddfoot{\centerline{\thepage}}%
 \let\@evenfoot\@oddfoot}
\makeatother

\usepackage[margin=1in]{geometry}

\usepackage{xcolor}
\usepackage{lipsum}
\usepackage{changepage}
\usepackage{centernot}
\usepackage{apptools}
\usepackage{mathrsfs}

\usepackage[T1]{fontenc}
\usepackage[latin9]{luainputenc}
\usepackage{amsmath}
\usepackage{float}
\usepackage{graphicx}
\usepackage{amsthm}
\usepackage{amssymb}
\usepackage{stmaryrd}
\usepackage{esint}
\usepackage{nameref}
\usepackage{hyperref} 
\usepackage{cleveref} 

\usepackage[shortlabels]{enumitem}
\usepackage{chngcntr}

\newsavebox{\foobox}

\usepackage{amssymb}

\usepackage{array}
\newcolumntype{M}[1]{>{\centering\arraybackslash}m{#1}}

\usepackage{bbm} 
\usepackage{babel}
\usepackage{fancyhdr}
\usepackage{chngcntr}
\usepackage{apptools}
\makeatletter
\numberwithin{equation}{section}
\theoremstyle{plain}
\newtheorem{thm}{\protect\theoremname}[section]

\theoremstyle{plain*}
\newtheorem*{thm*}{\protect\theoremname}

\theoremstyle{plain}
\newtheorem{lem}[thm]{\protect\lemmaname}
  
\theoremstyle{plain*}
\newtheorem*{lem*}{\protect\lemmaname}  
  
  \theoremstyle{plain}
  \newtheorem{prop}[thm]{\protect\propositionname}
  
    \theoremstyle{plain*}
  \newtheorem*{prop*}{\protect\propositionname}

\theoremstyle{remark}
\newtheorem{question}[thm]{Question}
\theoremstyle{remark*}
\newtheorem*{question*}{Question} 
\theoremstyle{remark}
\newtheorem{rem}[thm]{\protect\remarkname}
\theoremstyle{remark*}
\newtheorem*{rem*}{\protect\remarkname}
\theoremstyle{remark}
\newtheorem{example}[thm]{Example}
\theoremstyle{remark*}
\newtheorem*{example*}{\protect\examplename}

\theoremstyle{plain}
\newtheorem{cor}[thm]{\protect\corollaryname}
\providecommand{\corollaryname}{Corollary}

\theoremstyle{plain}
\newtheorem{conjecture}[thm]{\protect\conjecturename}
\providecommand{\conjecturename}{Conjecture}
  
\theoremstyle{definition}
  
\makeatother

\theoremstyle{plain} 
\newcommand{\thistheoremname}{}
\newtheorem{genericthm}[thm]{\thistheoremname}

\newtheorem*{genericthm*}{\thistheoremname}
\newenvironment{namedthm*}[1]
  {\renewcommand{\thistheoremname}{#1}%
   \begin{genericthm*}}
  {\end{genericthm*}}

\makeatother

\providecommand{\lemmaname}{Lemma}
\providecommand{\propositionname}{Proposition}
\providecommand{\remarkname}{Remark}
\providecommand{\theoremname}{Theorem}
\providecommand{\corollaryname}{Corollary}

\newcommand{\R}{\mathbb{R}}
\newcommand{\N}{\mathbb{N}}

\newcommand{\Z}{\mathbb Z}

\newcommand{\FF}{{\mathbb F_2^\omega}}

\newcommand\precdot{\mathrel{\ooalign{$\prec$\cr
  \hidewidth\raise0ex\hbox{$\cdot\mkern0.5mu$}\cr}}}
\newcommand\preceqdot{\mathrel{\ooalign{$\preceq$\cr
  \hidewidth\raise0.225ex\hbox{$\cdot\mkern0.5mu$}\cr}}}

\title{Polynomial maps which are not good for nice recurrence and applications}
\author{R. Zelada}
\AtAppendix{\counterwithin{thm}{section}}

\begin{document}
\begin{abstract}
Let $\mathbb F_2$ be the finite field with two elements and let $\FF$ denote the countably infinite-dimensional vector space over $\mathbb F_2$. We show that, unlike the case of polynomial maps $p:\Z\rightarrow\Z$ vanishing at zero which are always good for nice recurrence, 
there are polynomials $p:\FF\rightarrow \FF$ with $p(0_\FF)=0_\FF$ which fail to have this property. This  disproves a conjecture of Bergelson and McCutcheon (c. 2000), which predicted that for any countably infinite abelian groups $H$ and $G$, every polynomial map $p:H\to G$ with $p(0_H)=0_G$ is good for nice recurrence. Moreover, we develop a dynamical mechanism which shows that the magnitude of intersections along polynomial paths  is  degree-sensitive (even when one considers only weakly mixing systems).\\
Among other things, we also show that the Furstenberg-S{\'a}rk{\"o}zy theorem for  $\FF$-valued polynomials of degree at most $d$ vanishing at zero is equivalent to a $d$-dimensional symmetric-difference weakening of the density polynomial Hales-Jewett conjecture. Thus, as we explain in detail in this paper,
our observations not only shed new light on the phenomenon of polynomial recurrence 
but also 
constrain possible strategies for  proving or disproving   the density polynomial Hales-Jewett conjecture. 
\end{abstract}
\maketitle
\tableofcontents
\section{Introduction}
The Furstenberg-S{\'a}rk{\"o}zy theorem  \cite[Theorem 3.16]{Fbook}, \cite{sarkozy1978difference}, which can be viewed as a polynomial extension of the classical Poincar{\'e} recurrence theorem, can be stated as follows.
\begin{thm}[Furstenberg-S{\'a}rk{\"o}zy Theorem] \label{0.FSTheorem}
    Let $p\in\Z[x]$ be a non-constant polynomial with zero constant term and let $(X,\mathcal A,\mu,T)$ be an invertible probability preserving system. For any $A\in\mathcal A$ with $\mu(A)>0$, there are infinitely many $n\in\Z$ with $\mu(A\cap T^{-p(n)}A)>0$.
\end{thm}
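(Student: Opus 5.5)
We prove the statement in its Hilbert-space form and then specialize. Let $U$ be the Koopman operator $Uf=f\circ T$ on $L^2(\mu)$ and set $f=\mathbbm 1_A$, so that $\mu(A\cap T^{-p(n)}A)=\langle U^{p(n)}f,f\rangle$. We decompose $L^2(\mu)=\mathcal H_{c}\oplus\mathcal H_{wm}$, where $\mathcal H_c$ is the closed span of eigenfunctions of $U$ (the Kronecker factor) and $\mathcal H_{wm}$ is its orthogonal complement, on which $U$ has continuous spectral measures. Write $f=f_c+f_{wm}$. Both subspaces are $U$-invariant, so
\[
\langle U^{p(n)}f,f\rangle=\langle U^{p(n)}f_c,f_c\rangle+\langle U^{p(n)}f_{wm},f_{wm}\rangle .
\]
The argument then averages over a suitable set of $n$ and shows that the average is bounded below by a positive constant.

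The first step is to show that the weak mixing part vanishes on average:
\[
\lim_{N\to\infty}\frac1N\sum_{n=1}^N \bigl|\langle U^{p(n)}f_{wm},f_{wm}\rangle\bigr|^2=0 .
\]
By the spectral theorem, $\langle U^{p(n)}g,g\rangle=\int_{\mathbb T} e^{2\pi i p(n)x}\,d\sigma_g(x)$, where $\sigma_g$ is continuous. Squaring gives
\[
\frac1N\sum_{n=1}^N\iint e^{2\pi i p(n)(x-y)}\,d\sigma_g(x)\,d\sigma_g(y).
\]
For $x-y$ irrational, Weyl's equidistribution theorem (with $p$ non-constant) makes the inner average tend to $0$. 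The set of $(x,y)$ with $x-y$ rational has $\sigma_g\times\sigma_g$-measure zero, since $\sigma_g$ is continuous and this set is a countable union of translates of the diagonal. Dominated convergence then finishes the step. The same argument works along $n\in q\N$ for any fixed $q$, because $p(qm)$ is again a non-constant polynomial in $m$.

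The second step handles the compact part, and this is where the hypothesis $p(0)=0$ enters. Fix $\varepsilon>0$ and approximate $f_c$ within $\varepsilon$ by a finite combination $\sum_{j\le k}c_j e_j$ of eigenfunctions with $Ue_j=e^{2\pi i\alpha_j}e_j$. We need $n$ with $\|U^{p(n)}f_c-f_c\|<3\varepsilon$, which holds whenever $\|p(n)\alpha_j\|_{\mathbb T}$ is small for every $j\le k$. The rational $\alpha_j$ can be handled by restricting to $n\in q\N$, where $q$ is a common denominator: since $p(0)=0$ we have $q\mid p(qm)$. For the irrational part, we apply Weyl equidistribution of the polynomial orbit $(p(qm)\alpha_1,\dots,p(qm)\alpha_k)$ on the closure of its orbit, a subtorus coset containing $0$ because $p(0)=0$. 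This shows that the set $R_\varepsilon$ of $m$ for which every $\|p(qm)\alpha_j\|$ is small has positive lower density $\delta>0$.

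For $m\in R_\varepsilon$ we get $\langle U^{p(qm)}f_c,f_c\rangle\ge\|f_c\|^2-3\varepsilon\|f_c\|$. Also $\|f_c\|^2\ge\langle f,\mathbbm 1\rangle^2=\mu(A)^2$, since constants lie in $\mathcal H_c$. Choosing $\varepsilon$ small relative to $\mu(A)$, and using the first step along $q\N$ together with Cauchy--Schwarz, we obtain
\[
\liminf_N\frac1N\sum_{m\le N,\ m\in R_\varepsilon}\langle U^{p(qm)}f,f\rangle\ \ge\ \delta\,\mu(A)^2/2>0 .
\]
Hence $\mu(A\cap T^{-p(qm)}A)>0$ for a set of $m$ of positive upper density, which gives infinitely many $n=qm$.

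The main obstacle is the second step: producing simultaneous polynomial Diophantine approximation $\|p(n)\alpha_j\|\approx0$ on a set of positive density. This requires the full multidimensional Weyl theorem for polynomial sequences, applied to the subtorus generated by the $\alpha_j$ after separating rational dependencies. The hypothesis $p(0)=0$ is exactly what ensures that $0$ lies in the limiting orbit closure.
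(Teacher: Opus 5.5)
The paper does not prove this theorem. It quotes it from Furstenberg's book and S\'ark\"ozy, remarking only that a careful reading of Furstenberg's proof yields Corollary~\ref{0.cor:FScor}.

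Your argument is correct, but it splits $L^2(\mu)$ along a different line than Furstenberg's standard spectral proof.

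\textbf{Furstenberg's route.} He writes $\mathbbm 1_A=f_{\mathrm{rat}}+f_{\mathrm{te}}$. Here $f_{\mathrm{rat}}$ lies in the closed span of eigenfunctions with \emph{rational} eigenvalues, and $f_{\mathrm{te}}$ is orthogonal to it.
\begin{itemize}
\item The spectral measure of $f_{\mathrm{te}}$ gives no mass to $\mathbb Q/\Z$. So one-dimensional Weyl and dominated convergence give $\frac1N\sum_{m\le N}U^{p(qm)}f_{\mathrm{te}}\to0$ in norm for every $q$. Irrational atoms are harmless, so neither continuity of the spectral measure nor your product-measure argument is needed.
\item $f_{\mathrm{rat}}$ is approximated by a $U^q$-invariant vector, which every $U^{p(qm)}$ fixes because $q\mid p(qm)$.
\end{itemize}
Together these give directly $\liminf_N\frac1N\sum_{m\le N}\mu(A\cap T^{-p(qm)}A)\ge\mu(A)^2-\varepsilon$. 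This is exactly the nice-recurrence strengthening the paper has in mind.

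\textbf{Your route.} Because you put the irrational eigenvalues in the structured (Kronecker) part, you are forced into the simultaneous Diophantine problem $\|p(qm)\alpha_j\|\approx0$ and multidimensional Weyl. The ``main obstacle'' you identify comes from the Kronecker/weak-mixing split, not from the problem itself. In exchange you get an explicit positive-density set $R_\varepsilon$ on which the whole Kronecker component nearly returns. Combined with your first step, this gives $\mu(A\cap T^{-p(qm)}A)\ge\mu(A)^2-4\varepsilon$ on a positive-density set of $m$, so your route also yields nice recurrence.

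\textbf{One step needs stating more carefully.} If $q$ only clears the denominators of the rational $\alpha_j$, the orbit closure of $m\mapsto p(qm)(\alpha_1,\dots,\alpha_k)$ need not be a single subtorus coset. For example, with $\alpha=(\sqrt2,\sqrt2+\tfrac13)$ and $p(n)=n^2$, it is the diagonal together with its translate by $(0,\tfrac13)$.

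The fix is to let $\Lambda=\{\chi\in\Z^k:\chi\cdot\alpha\in\mathbb Q\}$ and take $q$ to be a common denominator of all values $\chi\cdot\alpha$, $\chi\in\Lambda$. These values form a finitely generated subgroup of $\mathbb Q$, so such a $q$ exists. Then $(p(qm)\alpha)_{m\in\N}$ is equidistributed for Haar measure on the closed subgroup $\{x\in\mathbb T^k:\chi\cdot x=0\ \text{for all}\ \chi\in\Lambda\}$. This subgroup contains $0$, so every neighbourhood of $0$ in it has positive Haar measure, and your positive-lower-density claim for $R_\varepsilon$ follows.
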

A careful analysis of Furstenberg's proof of  \cref{0.FSTheorem} yields the following stronger result dealing with what we  call "functions  good for nice recurrence". Given a countably infinite set $S$ and an abelian group $G$, a map $p:S\rightarrow G$ is said to be \textit{good for nice recurrence}  if for any invertible probability preserving system $(X,\mathcal A,\mu,(T^g)_{g\in G})$, any $A\in\mathcal A$ with $\mu(A)>0$, and any $\epsilon>0$ there are infinitely many  $s\in S$ for which 
$$
\mu(A\cap T^{-p(s)}A)>\mu^2(A)-\epsilon.
$$
If one only has that $\mu(A\cap T^{-p(s)}A)>0$ for infinitely many $s\in S$ instead, we will say that $p$ is \textit{good for recurrence}.
\begin{cor}\label[corollary]{0.cor:FScor}
Let $p\in\Z[x]$ be a non-constant polynomial with zero constant term. The polynomial $p:\Z\rightarrow \Z$ is good for nice recurrence. 
\end{cor}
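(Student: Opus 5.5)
The approach is Furstenberg's: decompose $L^2$ into the rational Kronecker part and its complement, and average $\mu(A\cap T^{-p(n)}A)$ along a sequence of arithmetic progressions $n\in q\Z$ adapted to the rational spectrum. Write $f=\mathbbm 1_A$ and $U$ for the Koopman operator of $T$. Split $L^2(\mu)=\mathcal H_{\mathrm{rat}}\oplus\mathcal H_{\mathrm{rat}}^\perp$. Here $\mathcal H_{\mathrm{rat}}$ is the closed span of eigenfunctions of $U$ whose eigenvalues are roots of unity. Write $f=f_r+f_\perp$ accordingly. Since $U$ preserves both subspaces,
$$\mu(A\cap T^{-p(n)}A)=\langle U^{p(n)}f_r,f_r\rangle+\langle U^{p(n)}f_\perp,f_\perp\rangle .$$

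\textbf{Step 1 (the rational part).} Fix $\epsilon>0$. Approximate $f_r$ within $\epsilon$ in $L^2$ by a finite combination $g$ of eigenfunctions whose eigenvalues are $q_i$-th roots of unity, and let $Q=\mathrm{lcm}(q_i)$. Since $p$ has zero constant term, $Q\mid p(Qm)$ for every $m\in\Z$, so $U^{p(Qm)}g=g$. It follows that $\langle U^{p(Qm)}f_r,f_r\rangle\ge \|f_r\|^2-2\epsilon\|f\|-\epsilon^2$ for all $m$; in fact it is $\ge\|f_r\|^2-3\epsilon$, since $\|f\|\le 1$. Moreover $\|f_r\|^2\ge\langle f_r,\mathbbm 1\rangle^2=\mu(A)^2$, because constants lie in $\mathcal H_{\mathrm{rat}}$ and so $\langle f,\mathbbm 1\rangle=\langle f_r,\mathbbm 1\rangle$.

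\textbf{Step 2 (the complement).} I will show that
$$\frac1N\sum_{m=1}^N\langle U^{p(Qm)}f_\perp,f_\perp\rangle\to 0 .$$
By the spectral theorem this equals $\int_{\mathbb T}\frac1N\sum_{m=1}^N e^{2\pi i p(Qm)x}\,d\sigma(x)$, where $\sigma$ is the spectral measure of $f_\perp$. This measure gives no mass to rational points: the atoms of $\sigma$ are eigenvalues of $U$ restricted to $\mathcal H_{\mathrm{rat}}^\perp$, and such a restriction has no rational eigenvalues. For irrational $x$, the polynomial $p(Qm)x$ is non-constant in $m$ with an irrational non-constant coefficient, so Weyl's equidistribution theorem makes the inner averages tend to $0$. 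These averages are bounded by $1$, so dominated convergence gives the limit $0$.

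\textbf{Step 3 (conclusion).} Averaging over $m\in[1,N]$, Steps 1 and 2 give
$$\liminf_N\frac1N\sum_{m=1}^N\mu(A\cap T^{-p(Qm)}A)\ge\mu(A)^2-3\epsilon .$$
Each term is at most $1$. Hence the set of $m$ with $\mu(A\cap T^{-p(Qm)}A)>\mu(A)^2-4\epsilon$ has positive lower density, and in particular is infinite. Distinct $m$ may yield the same $n=Qm$ only finitely often (they do not), so infinitely many $n$ qualify. Since $\epsilon$ is arbitrary, this proves the corollary. The $\Z$-action is given by powers of $T$, which matches the definition with $S=G=\Z$.

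The main obstacle is Step 2: verifying that $\sigma$ has no atoms at rationals, and correctly applying Weyl's theorem to $x\,p(Qm)$ for irrational $x$. In that polynomial the leading coefficient $x\cdot a_dQ^d$ is irrational, so Weyl's theorem applies. Everything else is routine.
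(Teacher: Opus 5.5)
Your proof is correct and is essentially the argument the paper has in mind. The paper gives no separate proof; it points to a careful reading of Furstenberg's proof of Theorem~\ref{0.FSTheorem}, and that proof is exactly this splitting of $\mathbbm 1_A$ into a rational-spectrum component and its orthocomplement: the first is made almost invariant along $n\in Q\Z$ using $p(0)=0$, with $\|f_r\|^2\ge\mu(A)^2$ giving the main term, and the second vanishes on average by the spectral theorem and Weyl equidistribution.
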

More generally, it is known that a  polynomial map $p:\Z\rightarrow \Z$ is good for nice recurrence if and only if it is intersective (i.e. for each $M\in\N$, there is an $n\in\Z$ with $p(n)\equiv 0\mod M$). Actually, one also has that $p$ is good for recurrence if and only if it is intersective \cite{KamaeFranceNiceRec1978} (see also Theorem 4.5 and Remark 4.6 in \cite{AckBer2025RingsOfIntegers}). Thus, in  a certain sense, the \textit{algebraic properties} of $p$ completely determine its recurrence properties. A conjecture of V. Bergelson and R. McCutcheon \cite[Conjecture 2]{berMcCuIPPolySzemeredi} predicted that this pattern generalizes to arbitrary group-theoretic polynomial maps vanishing at zero or, equivalently, that Corollary \ref{0.cor:FScor} can be extended to arbitrary countable abelian groups: for any given countably infinite abelian groups $H$ and $G$, every polynomial map $p:H\rightarrow G$ with $p(0_H)=0_G$ is good for nice recurrence.\\
Several results supported this prediction by verifying it for natural families of polynomial expressions on various abelian groups  (see, for example, \cite{BFM}, \cite{BHM}, \cite{BDonaldRobertsonIP_r}, \cite{McCutWSarkozy2014}, \cite{AckBer2025RingsOfIntegers}). The Polynomial Hales-Jewett theorem \cite{BerLeibPolyHJ} provided further evidence for the plausibility of  \cite[Conjecture 2]{berMcCuIPPolySzemeredi}. However, an example in  \cite{BFM} already foreshadowed  the difficulties  that can arise for infinitely generated abelian groups.
In fact, our main result in this paper implies that \Cref{0.cor:FScor} cannot be extended to arbitrary countably infinite abelian groups (and, so, \cite[Conjecture 2]{berMcCuIPPolySzemeredi} is false).\\
Our main result,  Theorem \hyperlink{0.thm:FailureOfPolynomialKhintchine}{A}, provides a family of  counterexamples to  \cite[Conjecture 2]{berMcCuIPPolySzemeredi} each of which is associated with a group-theoretic polynomial $p_d:\FF\rightarrow \FF$ of degree $d$, $d\geq 2$ (here and throughout this work, $\FF$ denotes the vector space with countably infinite dimension over the field with exactly two elements $\mathbb F_2$). The subsequent analysis of these examples reveals several interesting features of $\FF$-valued polynomial recurrence, including connections with the density polynomial Hales-Jewett conjecture and with a wider class of polynomial-like expressions known as VIP-systems. \\
More precisely, these counterexamples demonstrate that, over $\FF$, polynomial recurrence is not merely a qualitative issue: its strength depends on the degree and appears to be governed by \textit{combinatorial/algebraic} obstructions. Moreover, unlike the situation for polynomial maps $p:\Z\rightarrow\Z$, these examples indicate that an $\FF$-valued polynomial could be good for recurrence while failing to be good for nice recurrence.
Furthermore, as we show in Appendix \ref{C.Sec}, the recurrence properties of the  polynomials $p_d$, $d\geq 2$, have a  \textit{genuinely} combinatorial/algebraic character:
$p_d$ is good for recurrence if and only if every $\FF$-valued VIP-system of degree at most $d$ is good for recurrence, and this is equivalent to 
the following symmetric-difference weakening  of the density polynomial Hales-Jewett conjecture (DPHJ).
\begin{adjustwidth}{2em}{2em}
{\bf A symmetric-difference weakening  of DPHJ of dimension $d$:} For any $\delta>0$ there is an $N\in\N$ such that if $\mathcal S\subseteq \mathcal P(\{1,...,N\}^d)$ satisfies $|\mathcal S|\geq \delta 2^{N^d}$, then one can find $E,F\in\mathcal S$ and a non-empty $\gamma\subseteq \{1,...,N\}$ with $E\triangle F=\gamma^d$ (Cf. \cite[Gil Kalai comment, Mar/05/2013]{GowersDPHJjBlog}).\\
\end{adjustwidth}
The preceding paragraphs give a brief outline of the main features of our examples. In the next six subsections, we make this outline precise: we first state Theorem \hyperlink{0.thm:FailureOfPolynomialKhintchine}{A} and discuss its consequences for $\FF$-valued polynomial recurrence; then we relate these results to DPHJ, VIP-systems, and IP-/IP$^*$-convergence.

\subsection{ Background on polynomial maps and a consequence of the conjecture of Bergelson-McCutcheon}
In this subsection we review the necessary background on  group-theoretic polynomials and   record a consequence of \cite[Conjecture 2] {berMcCuIPPolySzemeredi} which is disproved by Theorem \hyperlink{0.thm:FailureOfPolynomialKhintchine}{A}. As we will see, this consequence would have provided  a far-reaching  extension of the Furstenberg-S{\'a}rk{\"o}zy theorem to arbitrary countable abelian groups, and thus stating it here helps explain the interest in \cite[Conjecture 2] {berMcCuIPPolySzemeredi}.\\

Let $G$ and $H$ be  abelian groups and let $p:H\rightarrow G$ be a function. For any $h\in H$, we define $D_hp:H\rightarrow G$, the \textit{discrete derivative of $p$ at $h$},  by the rule 
$$
D_hp(x)=p(x+h)-p(x).
$$
We say that $p$ is a polynomial mapping if there exists a $t\in \N$  with the property that for any $h_0,...,h_t\in H$ and any $g\in H$, 
\begin{equation}\label{0.eq:DefnDP}
D_{h_t}\cdots D_{h_0}p(g)=0_{G}.
\end{equation}
The \textit{degree of $p$}, denoted $\deg(p)$, is the  least $t\in\N$ for which \eqref{0.eq:DefnDP} holds.
When $H=G=\Z$, every polynomial mapping $p:\Z\rightarrow \Z$ belongs to the group generated by the "algebraic polynomials"
$$
\binom{x}{d}=\frac{(x-d+1)\cdots(x-1)(x)}{d!},\,d\in\N\cup\{0\},
$$
where we adopt the conventions that $\binom{x}{1}=x$ and $\binom{x}{0}=1$ \cite[Proposition 2.6]{leibman1998polynomial}.\\

The next proposition is the consequence of \cite[Conjecture 2]{berMcCuIPPolySzemeredi} mentioned above.  It mirrors  Corollary  \ref{0.cor:FScor} by  asserting that if \cite[Conjecture 2]{berMcCuIPPolySzemeredi} holds (and $H$ is infinite),  then any polynomial $p:H\rightarrow G$ with $p(O_H)=O_G$ is good for nice recurrence \textit{regardless} of its degree. 
\begin{prop}\label{prop:SyndeticReturns}
    Suppose that \cite[Conjecture 2]{berMcCuIPPolySzemeredi} holds and let $H$ and $G$ be  countable  abelian groups. For any polynomial $p:H\rightarrow G$ with $p(0_H)=0_G$, any invertible probability preserving system $(X,\mathcal A,\mu,(T^g)_{g\in G})$, any $A\in\mathcal A$, and any $\epsilon>0$, the set 
    $$R_\epsilon^p(A):=\{h\in H\,|\,\mu(A\cap T^{-p(h)}A)>\mu^2(A)-\epsilon\}$$
    is syndetic, meaning that there is a non-empty finite subset $F\subseteq H$ such that $\bigcup_{h\in F}(R_\epsilon^p(A)+h)=H$.
\end{prop}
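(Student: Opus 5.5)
The plan is to argue by contradiction: if $R_\epsilon^p(A)$ fails to be syndetic, I build a new polynomial $q$ with $q(0)=0$ and a measure preserving system in which $q$ is not good for nice recurrence. That contradicts \cite[Conjecture 2]{berMcCuIPPolySzemeredi}.

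First I dispose of the finite case. If $H$ is finite, then $H$ itself is a finite set whose translates of $R_\epsilon^p(A)$ cover $H$, provided $R_\epsilon^p(A)\neq\emptyset$. It is non-empty because $0_H\in R_\epsilon^p(A)$: indeed $\mu(A\cap T^{0}A)=\mu(A)\geq\mu^2(A)>\mu^2(A)-\epsilon$. So assume $H$ is countably infinite.

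Next I reformulate non-syndeticity. A set $R\subseteq H$ is syndetic iff its complement $H\setminus R$ is not thick. Here thick means that for every finite $F\subseteq H$ there is $x$ with $x+F\subseteq H\setminus R$. So suppose $R_\epsilon^p(A)$ is not syndetic. Enumerate $H=\{h_1,h_2,\dots\}$ and let $F_n=\{h_1,\dots,h_n\}$. We obtain $x_n\in H$ with
$$\mu(A\cap T^{-p(x_n+h)}A)\leq \mu^2(A)-\epsilon\quad\text{for all } h\in F_n.$$

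The key step is to encode these shifts in a single polynomial on a larger group. Let $H'=\bigoplus_{n\in\N}H$, which is countable and abelian. Define
$$q:H'\to G,\qquad q\big((y_n)_n\big)=\sum_n\big(p(x_n+y_n)-p(x_n)\big).$$
Each summand $y\mapsto p(x_n+y)-p(x_n)$ is a polynomial of degree at most $\deg p$ vanishing at $0$. A sum of such maps in separate coordinates is again a polynomial of degree at most $\deg p$, because every discrete derivative splits coordinatewise. Hence $q(0_{H'})=0_G$.

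Since $q$ carries the unwanted constants $p(x_n)$, I change the system to absorb them. Take the product system $X\times X^{\N}$, or more simply consider the return times directly. The cleaner route is to pass to the group $G'=G\oplus\bigoplus_n\Z$ and let $T'$ act by $T$ on $X$ and trivially on the extra coordinates. This doesn't remove the constants, however. The simplest alternative is to replace $q$ by a map whose values at "generic" points are exactly $p(x_n+h)$. For this, use the polynomial on $H\oplus\bigoplus_n\Z$ given by $(y,(k_n))\mapsto p(y+\sum_n k_nx_n)$. This is a polynomial (the composition of $p$ with a homomorphism) and it vanishes at $0$. By the conjecture it is good for nice recurrence, so infinitely many arguments give value $>\mu^2(A)-\epsilon$. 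I then need those arguments to produce $y+\sum k_nx_n\in x_m+F_m$, and this is where the difficulty lies.

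The main obstacle is exactly this transfer: the conjecture gives infinitely many good arguments, but not necessarily arguments of the special form $x_m+h$ with $h\in F_m$. To fix this I would choose the domain so that \emph{every} non-zero argument, or all but finitely many, lands in such a set. Take $H''=\bigoplus_m F_m$-indexed copies and define $r:H''\to G$ sending the $m$-th coordinate $e_m$-direction to $p(x_m+\cdot)-p(x_m)$. Then use a system with an extra rotation to cancel $p(x_m)$, via a Rokhlin-type skew product in which $T^{p(x_m)}$ is realized by an independent factor. If that fails, I would instead use the conjecture for $p$ itself on the infinite set $S=\bigcup_m(x_m+F_m)$. I would verify that the proof of the conjecture's consequence only requires infinitely many good points among a sequence of finite configurations tending to infinity, and that the $x_n$ can be chosen to escape to infinity.
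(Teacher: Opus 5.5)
\textbf{There is a genuine gap.} With $R:=R_\epsilon^p(A)$, you correctly reduce to the case where $H\setminus R$ is thick. From there, however, you never produce an object to which the conjecture applies and whose good values are forced into $R$, and the alternatives you list do not work.
\begin{itemize}
\item The constants $p(x_n)$ in your $q$ cannot be absorbed by changing the system. The set $R$ is defined relative to the fixed action $T$, and $\mu(A\cap T^{-(p(x_m+y)-p(x_m))}A)$ is simply a different correlation from $\mu(A\cap T^{-p(x_m+y)}A)$.
\item For $(y,(k_n))\mapsto p(y+\sum_n k_nx_n)$, the conjecture does give infinitely many good arguments. But they may all lie in the kernel of $(y,(k_n))\mapsto y+\sum_n k_nx_n$, i.e.\ they may all correspond to $0_H\in R$.
\item The fallback of applying the conjecture ``to $p$ on $S=\bigcup_m(x_m+F_m)$'' has no content. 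By construction $S\subseteq H\setminus R$, and $p|_S$ is neither a polynomial nor a VIP-system, so nothing forces a good point in $S$. Whether the $x_n$ escape to infinity is irrelevant.
\end{itemize}

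\textbf{The missing idea.} Invoke the conjecture in its VIP form, and build inside the thick complement a finite-sums set based at $0_H$, rather than translates $x_n+F_n$.
\begin{enumerate}
\item Pick $h_1\in H\setminus R$.
\item Given $h_1,\dots,h_n$ whose finite sums all lie in $H\setminus R$, use thickness to choose $h_{n+1}$ with
$h_{n+1}+\bigl(\{0_H\}\cup\{h_\alpha:\emptyset\neq\alpha\subseteq\{1,\dots,n\}\}\bigr)\subseteq H\setminus R$.
\item Then $h_\alpha\notin R$ for every $\alpha\in\mathcal F$.
\item The IP-polynomial $v(\alpha)=p(h_\alpha)$ is a VIP-system of degree at most $\deg p$. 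This is the computation with $D_{y_d}\cdots D_{y_0}p(0_H)$ in Subsection~\ref{1.5:Sec}.
\item Conjecture~\ref{0.Conjecture A}, which follows from Conjecture 2 as shown in Appendix~\ref{B.Sec}, then yields an $\alpha$ with $\mu(A\cap T^{-p(h_\alpha)}A)>\mu^2(A)-\epsilon$. That is, $h_\alpha\in R$, a contradiction.
\end{enumerate}

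This is exactly the ``domain in which every non-zero argument lands in the bad set'' you were looking for. The domain is $\mathcal F$, not a group, which is why the conjecture must be used for VIP-systems rather than for group polynomials. This is the paper's argument: $R$ meets every finite-sums set, and such sets are syndetic.
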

\begin{rem}\label{0.rem:FK}
When $\deg(p)=1$ and $p$ vanishes at zero,  every set of the form $R_\epsilon^p(A)$ is syndetic. See  \cite[p. 34]{MultifariousPoincare} for a simple proof. 
\end{rem}
\subsection{Statement of our main result}
 Given a countable abelian group $G$ and a probability preserving system $(X,\mathcal A,\mu,(T^g)_{g\in G})$, we say that $(T^g)_{g\in G}$ is weakly mixing if for every $A\in\mathcal A$ there exists a sequence $(g_k)_{k\in\N}$ in $G$ with the property that for any $B\in\mathcal A$, 
$$
\lim_{k\rightarrow\infty}\mu(A\cap T^{g_k}B)=\mu(A)\mu(B).
$$
For other equivalent definitions of a weakly mixing system see \cite[Theorem 4.1]{BerRos1988-MixingForGroups} (see also \cite[Theorem 1.8]{BerGor2005-WeakMixing}). Recall that  $\mathbb{F}_2$ denotes the field with two elements and $\FF$ is the vector space over $\mathbb{F}_2$ with countably infinite dimension. 
\begin{namedthm*}{Theorem A}\hypertarget{0.thm:FailureOfPolynomialKhintchine}
Let $d\in\N$. There is a  weakly mixing probability measure preserving system  
$$(X,\mathcal A,\mu,(T^\xi)_{\xi\in\mathbb F_2^\omega}
)$$
and a collection of sets $A_\delta\in\mathcal A$ with $\mu(A_\delta)=\delta$, $\delta\in (0,1/2]$, such that for every $\epsilon\in(0,1/2)$, there is a polynomial  $p_{d,\epsilon}:\FF\rightarrow \FF$ of degree $d$ mapping  $0_\FF$ to $0_\FF$ and with the property   that 
\begin{equation}\label{0.eq:SmallIntersections}
\{\xi\in \mathbb F_2^\omega\,|\,|\mu(A_\delta\cap T^{-p_{d,\epsilon}(\xi)}A_\delta)-\frac{\delta}{2^{d}}|\geq \epsilon\cdot\delta\}=\{0_\FF\}
\end{equation} 
for each $\delta\in (0,1/2]$. In fact, there is a polynomial $p_d:\FF\rightarrow\FF$ of degree $d$ with the property that for every $\epsilon\in (0,1/2)$ one can find an injective homomorphism $\varphi_{d,\epsilon}:\FF\rightarrow \FF$ such that $p_{d,\epsilon}=p_d\circ\varphi_{d,\epsilon}$.
\end{namedthm*}
\begin{rem}
Notice that the injective homomorphisms of the form $\varphi_{d,\epsilon}$ encode the following asymptotic feature of Theorem \hyperlink{0.thm:FailureOfPolynomialKhintchine}{A}:
    \begin{equation}\label{0.eq:IntuitiveIP*Limit}
\lim_{\epsilon\rightarrow 0^{+}}\sup_{\xi\in H_{d,\epsilon},\,\xi\neq 0_\FF}|\mu(A_\delta\cap T^{-p_d(\xi)}A_\delta)-\frac{\delta}{2^d}|=0,
    \end{equation}
where for each $\epsilon\in (0,1/2)$, $H_{d,\epsilon}$ is the infinite subgroup $\varphi_{d,\epsilon}(\FF)$ of $\FF$. As we will see in Example \ref{0.ExampleOfConvergence} below, the construction underlying Theorem \hyperlink{0.thm:FailureOfPolynomialKhintchine}{A} possesses related but somewhat stronger asymptotic properties which can be formalized by employing the language of IP$^*$-limits.
\end{rem}
\begin{rem}
    For an alternative formulation of Theorem \hyperlink{0.thm:FailureOfPolynomialKhintchine}{A} in which $p_{d,\epsilon}=p_d$ in formula \eqref{0.eq:SmallIntersections} for each $\epsilon>0$ (at the cost of modifying $(T^\xi)_{\xi\in\mathbb F_2^\omega}$), see Section \ref{6.Sec}.
\end{rem}
Next we state Corollary \hyperlink{0.Cor:MainResult}{B} which disproves \cite[Conjecture 2]{berMcCuIPPolySzemeredi} by showing that Corollary \ref{0.cor:FScor} cannot be extended to arbitrary countably infinite abelian groups. To deduce Corollary \hyperlink{0.Cor:MainResult}{B}, apply  Theorem \hyperlink{0.thm:FailureOfPolynomialKhintchine}{A} with $d\geq 2$, $\delta=1/2$, and $\epsilon=1/8$. Then for every non-zero $\xi\in\FF$,  
$$\mu(A_{1/2}\cap T^{-p_{d,1/8}(\xi)}A_{1/2})<\frac{1}{2^{d+1}}+\frac{1}{16}\leq \frac{3}{16}=\mu^2(A_{1/2})-\frac{1}{16}.$$
Hence $v_d:=p_{d,1/8}$ is not good for nice recurrence. 
\begin{namedthm*}{Corollary B}\hypertarget{0.Cor:MainResult}
For every $d>1$ there is a polynomial  $v_d:\mathbb F_2^\omega\rightarrow \mathbb F_2^\omega$ of degree $d$ vanishing at zero which is not good for nice recurrence. 
\end{namedthm*}
\subsection{The "sharpness" of Theorem A}
The result below establishes that, in a certain sense, Theorem \hyperlink{0.thm:FailureOfPolynomialKhintchine}{A} is "sharp". 
For each $d\in\N$, we let $a_d$ be the largest positive number with the property that for every invertible probability preserving system $(X,\mathcal A,\mu, (T^\xi)_{\xi\in\FF})$, every set $A\in\mathcal A$ with $\mu(A)=\frac{1}{2}$, and every polynomial  $p:\FF\rightarrow \FF$ of degree $d$ with $p(0_\FF)=0_\FF$, one has
$$
\sup_{\xi\in\FF,\,\xi\neq0_\FF}\mu(A\cap T^{-p(\xi)}A)\geq a_d.
$$
Thus $a_d$ is the best universal lower bound for degree $d$ polynomial-recurrence when $\mu(A)=1/2$.
\begin{namedthm*}{Corollary C}\hypertarget{0.prop:CorrectRateOfGrowth}
    For every $d\in\N$, 
    \begin{equation}\label{0.eq:SharpnessPoly}
    \frac{1}{2^{d+1}}\left(\frac{1}{2-1/2^d}\right)\leq a_d\leq \frac{1}{2^{d+1}}.
    \end{equation}
    Thus, $(a_d)_{d\in\N}$ is $O(\frac{1}{2^{d+1}})$.
\end{namedthm*}
It is worth pointing out that the lower bound of \eqref{0.eq:SharpnessPoly} establishes that for every $d\in\N$, every invertible probability preserving system $(X,\mathcal A,\mu, (T^\xi)_{\xi\in\FF})$, every $A\in\mathcal A$ with $\mu(A)\geq\frac{1}{2}-\frac{1}{2^{d+3}}$, and any polynomial $p:\FF\rightarrow\FF$ of degree at most $d$ vanishing at zero,  one has that for some non-zero $\xi\in\FF$,
\begin{equation}\label{0.eq:LargeIntersectionForDegreed}
\mu(A\cap T^{-p(\xi)}A)> \frac{1}{2^{2d+3}}.
\end{equation}
As a matter of fact, because every infinite subgroup of $\FF$ is isomorphic to $\FF$ and for any injective homomorphism $\varphi:\FF\rightarrow\FF$, $p\circ\varphi$ is a polynomial of degree at most $d$, it follows from  \eqref{0.eq:LargeIntersectionForDegreed}  that the set 
\begin{equation}\label{0.eq:LargeSyndeticReturns}
R^p(A):=\{\xi\in\FF\,|\,\mu(A\cap T^{-p(\xi)}A)>\frac{1}{2^{2d+3}}\}
\end{equation}
has  a non-trivial intersection with every infinite subgroup of $\FF$. So, in particular, it is syndetic. 
In the next subsection we will employ this observation to derive a combinatorial application of Theorem \hyperlink{0.thm:FailureOfPolynomialKhintchine}{A}.
\begin{rem}
    Combining \eqref{0.eq:LargeIntersectionForDegreed} with our results in Appendix \ref{C.Sec}, one obtains the following special case of the symmetric-difference weakening of DPHJ mentioned at the beginning of this Introduction: 
    \begin{adjustwidth}{2em}{2em}
Let $d\in\N$ and  let $\delta=\frac{1}{2}-\frac{1}{2^{d+3}}$. There is an $N\in\N$ such that if $\mathcal S\subseteq \mathcal P(\{1,...,N\}^d)$ satisfies $|\mathcal S|\geq \delta 2^{N^d}$, then one can find $E,F\in\mathcal S$ and a non-empty $\gamma\subseteq \{1,...,N\}$ with $E\triangle F=\gamma^d$.
\end{adjustwidth}
\end{rem}
\subsection{A combinatorial application of Theorem A}\label{1.6:Sec}
Before stating the combinatorial application of Theorem \hyperlink{0.thm:FailureOfPolynomialKhintchine}{A} mentioned above, we need to introduce some notation.\\

Let $(G,+)$ be a countable abelian group. Recall that a  sequence $(\Phi_N)_{N\in\N}$ of non-empty, finite  subsets  of $G$ is called a F{\o}lner sequence if for any $g\in G$,
$$
\lim_{N\rightarrow\infty}\frac{|\Phi_N\cap(\Phi_N-g)|}{|\Phi_N|}=1.
$$
For any $E\subseteq G$, we define the upper Banach density of $E$, denoted $d^*(E)$, by 
$$
d^*(E)=\sup_{\text{F{\o}lner sequences } (\Phi_N)}\limsup_{N\rightarrow\infty}\frac{|E\cap \Phi_N|}{|\Phi_N|}.
$$
We say that an element $g\in G$ 
 is  an \textit{$\epsilon$-popular difference} (for the set $E$) if  
 $$d^*(E\cap (E-g))\geq (d^*(E))^2-\epsilon.$$
 Equivalently, $g$ is an $\epsilon$-popular difference if there is a F{\o}lner sequence $(\Phi_N)_{N\in\N}$ in $G$ such that 
 $$
\limsup_{N\rightarrow\infty}\frac{|\{(h,h')\in (E\cap \Phi_N)^2\,|\,h-h'=g\}|}{|\Phi_N|}\geq(d^*(E))^2-\epsilon.
 $$ 
Since the polynomial $p_d$ defined in Theorem \hyperlink{0.thm:FailureOfPolynomialKhintchine}{A} has degree $d$,  Furstenberg's correspondence principle and formula \eqref{0.eq:LargeSyndeticReturns}
imply that for any $E\subseteq \FF$ with $d^*(E)\geq\frac{1}{2}-\frac{1}{2^{d+3}}$, the set 
$$
\{\xi\in\FF\,|\,p_d(\xi)\in E-E\}
$$
 is syndetic. The next result shows that this qualitative recurrence is compatible with the complete absence of non-trivial popular differences along $p_d(\FF)$.
 \begin{namedthm*}{Corollary D}\hypertarget{0.cor:CombinatorialApplication2}
    Let $d>1$ and let $p_d$ be as in the statement of Theorem \hyperlink{0.thm:FailureOfPolynomialKhintchine}{A}. There is  a set $E\subseteq \FF$ with $d^*(E)\geq \frac{1}{2}-\frac{1}{2^{d+3}}$ and an $\epsilon>0$, such that for every non-zero $\xi\in \FF$, $p_d(\xi)$ is not an $\epsilon$-popular difference for $E$. 
\end{namedthm*}
Several facts make  Corollary \hyperlink{0.cor:CombinatorialApplication2}{D}  somewhat surprising:
\begin{itemize}
\item Remark \ref{0.rem:FK} implies that for any countable abelian group $G$, any polynomial $p:\FF\rightarrow G$ vanishing at zero of degree one, any set $E\subseteq G$ with $d^*(E)>0$, and any $\epsilon>0$, the set 
    $$\{\xi\in\FF\,|\,p(\xi)\text{ is an $\epsilon$-popular difference for }E\}$$
    is syndetic.
\item  As shown in \cite[Theorem 4.13]{AckBer2025RingsOfIntegers}, there are many countable abelian groups $(G,+)$ with the property that for any member $p:G\rightarrow G$ of a certain class of polynomials, one has that if for every $E\subseteq G$ with $d^*(E)>0$, there are distinct $x,y\in E$ and a $g\in G$ with $p(g)=x-y$, then for every $E\subseteq G$ with $d^*(E)>0$ and every $\epsilon>0$, the set 
  $$\{g\in G\,|\,p(g)\text{ is an $\epsilon$-popular difference for }E\}$$
is syndetic.
\item  The results of \cite{BHKNilSystems2005} show that, in the context of multiple recurrence and measure preserving $\Z$-actions, the existence of measure-theoretical examples that fail to exhibit nice recurrence (see \cite[Theorem 2.1]{BHKNilSystems2005}) does not always translate into the existence of subsets of $\Z$ without $\epsilon$-popular differences (see \cite[Corollary 1.5]{BHKNilSystems2005}). This phenomenon was shown to hold for more general abelian groups in \cite{ABBLargeIntersection2021}.
\end{itemize}
\subsection{Background on VIP-systems and {\rm IP$^*$}-convergence}\label{1.5:Sec}
As we mentioned at the beginning of the Introduction, many of the results associated with  Theorem \hyperlink{0.thm:FailureOfPolynomialKhintchine}{A} are relevant not only for polynomial maps but also for a wider class of polynomial-like maps known as VIP-systems. 
In fact, much of the recurrence-theoretic structure surrounding Theorem \hyperlink{0.thm:FailureOfPolynomialKhintchine}{A} can be succinctly expressed with the help of IP$^*$-limits. 
In this subsection we review the necessary background on VIP-systems and IP-/IP$^*$-convergence. We remark that the material presented in this subsection will be useful throughout this work and  is necessary to state \cite[Conjecture 2]{berMcCuIPPolySzemeredi} in its original form, 
and hence to fully justify Proposition \ref{prop:SyndeticReturns} and the main claim of this paper.
\subsubsection{VIP-systems}
In \cite{BFM}, V. Bergelson, H. Furstenberg, and R. McCutcheon introduced the notion of a VIP-system which generalizes that of a polynomial mapping $p:H\rightarrow G$ satisfying $p(0_H)=0_G$. Let $\mathcal F$ denote the class of all non-empty finite subsets of $\N$ and let $G$ be an abelian group. A map $v:\mathcal F\rightarrow G$ is called a VIP-system if there exists a $t\in\N$ with the property that for any pairwise disjoint $\alpha_0,...,\alpha_t\in \mathcal F$,
\begin{equation}\label{0.eq:DefnVIP}
    \sum_{\beta\subseteq\{0,...,t\},\,\beta\neq\emptyset} (-1)^{|\beta|}v(\bigcup_{j\in\beta}\alpha_j)=0_G,
\end{equation}
where $|\beta|$ denotes the cardinality of $\beta$. We say that the degree of $v$ is $d\in\N$, denoted $\deg(v)$, if $d$ is the least $t\in\N$ for which \eqref{0.eq:DefnVIP} holds. When a VIP-system $v:\mathcal F\rightarrow G$ has degree 1, we call it  an IP-system. Notice that in this case one has that the sequence $g_k=v(\{k\})$, $k\in\N$, has the property that for any $\alpha\in\mathcal F$,
$$
g_\alpha:=\sum_{j\in\alpha}g_j=v(\alpha).
$$
The following observation relates polynomial mappings to VIP-systems: for any polynomial mapping $p:H\rightarrow G$ of degree $d$ satisfying $p(0_H)=0_G$ and any sequence $(h_k)_{k\in\N}$ in $H$, one has that the map $v:\mathcal F\rightarrow G$ given by
$v(\alpha)=p(h_\alpha)$
is a VIP-system of degree at most $d$ which we call the \textit{IP-polynomial generated by $p$ and $(h_\alpha)_{\alpha\in\mathcal F}$}. (To see why $\deg(v)\leq d$, it is sufficient to note that for any  $y_0,...,y_d\in H$, 
\begin{multline*}
0_G=D_{y_d}\cdots D_{y_0}p(0_H)=\sum_{\beta\subseteq\{0,...,d\},\,\beta\neq\emptyset} (-1)^{(d+1)-|\beta|}p(y_\beta)+(-1)^{d+1}p(0_H)\\
=(-1)^{(d+1)}\sum_{\beta\subseteq\{0,...,d\},\,\beta\neq\emptyset} (-1)^{|\beta|}p(y_\beta).)
\end{multline*}
Moreover, as shown in \cite[Theorem 1.2]{BHM}, not all VIP-systems can be written as the composition of a polynomial mapping with an IP-system. (In other words, not every VIP-system is an IP-polynomial.)\\
Next we present  a "simplified", equivalent form of \cite[Conjecture 2]{berMcCuIPPolySzemeredi} that does not involve the use of IP- or IP$^*$-convergence. We will present (a slight variant of) the original statement of \cite[Conjecture 2]{berMcCuIPPolySzemeredi} below. 
\begin{conjecture}\label{0.Conjecture A}
    Let $(G,+)$ be a countable abelian group and let $v:\mathcal F\rightarrow G$ be a VIP-system. Then, $v$ is good for nice recurrence. 
\end{conjecture}
\begin{rem}
    Observe that Proposition \ref{prop:SyndeticReturns} is an immediate consequence of  Conjecture \ref{0.Conjecture A}. Indeed, consider abelian groups $H$ and $G$ and let $p:H\rightarrow G$ be a polynomial mapping vanishing at zero. By Conjecture \ref{0.Conjecture A}, one has that for every IP-system $(h_\alpha)_{\alpha\in\mathcal F}$ in $H$ and any set of the form $R_\epsilon^p(A)$, there is an $\alpha\in \mathcal F$, with $h_\alpha\in R_\epsilon^p(A)$. Since any subset of $H$ having a non-trivial intersection with every set of finite sums $\{h_\alpha\,|\,\alpha\in\mathcal F\}$ is syndetic, it follows that $R_\epsilon^p(A)$ is syndetic. 
\end{rem}
We conclude our discussion of VIP-systems by noting that they can be naturally interpreted as polynomial-like maps whose domain is $\FF$. As we will see, this interpretation turns out to be particularly well-suited for the flow of our discussion. \\
It is well-known that  $(\FF,+)$ is isomorphic to $(\mathcal F_\emptyset,\triangle)$, where $\mathcal F_\emptyset:=\mathcal F\cup\{\emptyset\}$ and $\triangle$ denotes the symmetric difference operator.
This isomorphism, which will be used implicitly throughout the paper, can be described as follows:
Identify  $\FF$ with the set 
$$
\{\xi\in\{0,1\}^\N\,|\,\exists N\in\N\forall n\geq N,\,\xi(n)=0\}.
$$
Under this identification for any  $\xi,\eta\in\FF$ and any $n\in\N$, 
$$(\xi+\eta)(n)\equiv \xi(n)+\eta(n)\mod 2.$$
It follows that the map 
$$\xi\mapsto \alpha_\xi:=\{n\in\N\,|\,\xi(n)\neq 0\}$$
is the desired group isomorphism.
Under this identification, one has that for any given abelian group $(G,+)$, polynomial mappings $p:\FF\rightarrow G$ and $G$-valued VIP-systems are related in two simple ways:
\begin{itemize}
    \item [-] The restriction of any  polynomial mapping $p:\FF\rightarrow G$ with $p(0_\FF)=0_G$ to $\mathcal F=\FF\setminus\{0_\FF\}$ is a VIP-system. Notice that the degree of $p$ viewed as a VIP-system  is at most that of $p$ when viewed as a polynomial.
    \item [-] For any VIP-system $v:\mathcal F\rightarrow G$ one can extend the domain of $v$ to $\mathcal F_\emptyset$ by setting $v(\emptyset)=0_G$. Thus $v$ may be viewed as a map from $\FF$ to $G$ with properties similar to those of a  regular polynomial (although, $v$ may not satisfy \eqref{0.eq:DefnDP}). 
\end{itemize}
\subsubsection{{\rm IP$^*$}-convergence}
The notion of IP-limit  plays a central role in stating and proving key results in IP-ergodic theory (see, for example, \cite{Fbook}, \cite{FKIPSzemerediLong}, \cite{BFM}, \cite{mccutcheon2005fvip}, \cite{ZorinIPNilpotentSz}). We next  define  a variant of this notion of convergence dealing with $\FF$-indexed sequences and which corresponds to the classical notion of  IP$^*$-limit introduced by Furstenberg in \cite{Fbook}. As we will see, IP-limits and IP$^*$-limits of $\FF$-indexed sequences are, so to say, \textit{quasi-equivalent} (see Remark \ref{0.rem"IPvsIP*Lim} below).\\
Let $(X,d)$ be a metric space and let $(x_\xi)_{\xi\in\FF}$ be an $\FF$-indexed sequence. We write
$$
\mathop{\text{{\rm IP$^*$-lim}}}_{\xi\in\FF}\; x_\xi=x
$$
if there is an $x\in X$ with the property that for every infinite subgroup $H$ of $\FF$ and every $\epsilon>0$ there is a non-zero $\xi\in H$ such that
$d(x,x_\xi)<\epsilon$. The following result due to N. Hindman \cite{HIPPartitionRegular} implies that whenever $X$ is a compact metric space, one can always find an injective homomorphism $\varphi:\FF\rightarrow \FF$ and an $x\in X$ such that 
$
\mathop{\text{{\rm IP$^*$-lim}}}_{\xi\in\FF}\; x_{\varphi(\xi)}=x.
$ Here we use the elementary observation that  any infinite subgroup of $\FF$ is isomorphic to $\FF$.
\begin{thm}[Hindman's Theorem on $\FF$]\label{0.thm:Hindman}
Let $r\in\N$ and let $C_1,...,C_r\subseteq \FF$ be such that $\FF=\bigcup_{j=1}^rC_j$. Then, there is a $j_0\in\{1,...,r\}$ and an infinite subgroup $H$ of $\FF$ such that $H\subseteq C_{j_0}\cup \{0_\FF\}$. 
\end{thm}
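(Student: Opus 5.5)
The approach is the Galvin--Glazer ultrafilter proof of Hindman's theorem, carried out in $(\FF,+)$ but restricted to ultrafilters that "live at infinity". The goal is a sequence $(\xi_k)_{k\in\N}$ of non-zero elements of $\FF$ with pairwise disjoint supports (even $\max\alpha_{\xi_k}<\min\alpha_{\xi_{k+1}}$) such that every finite sum $\sum_{k\in\beta}\xi_k$, $\beta\in\mathcal F$, lies in a single cell $C_{j_0}$. Once we have this, $H:=\mathrm{span}\{\xi_k\,|\,k\in\N\}$ is the desired subgroup. Because the supports are disjoint, the $\xi_k$ are linearly independent, so $H$ is infinite. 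Every non-zero element of $H$ has the form $\sum_{k\in\beta}\xi_k$ with $\beta\in\mathcal F$, so $H\subseteq C_{j_0}\cup\{0_\FF\}$.

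\textbf{Step 1: an idempotent at infinity.} Extend $+$ to the Stone--\v{C}ech compactification $\beta\FF$ in the usual way: $A\in p+q$ iff $\{x\,|\,A-x\in q\}\in p$. This makes $\beta\FF$ a compact right-topological semigroup. Let $\mathbb S$ be the set of ultrafilters $p$ such that for every $n\in\N$,
\[
\{\xi\neq0_\FF\,|\,\min\alpha_\xi>n\}\in p .
\]
Then $\mathbb S$ is closed, as an intersection of basic clopen sets, and non-empty by the finite intersection property. To see that $\mathbb S$ is a subsemigroup, fix $n$ and let $p,q\in\mathbb S$. For any $x\neq0_\FF$ with $\min\alpha_x>n$, the set $-x+\{\xi\neq 0\,|\,\min\alpha_\xi>n\}$ contains $\{\eta\neq0\,|\,\min\alpha_\eta>\max\alpha_x\}\in q$. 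Hence the set of such $x$ witnesses that $\{\xi\neq0\,|\,\min\alpha_\xi>n\}\in p+q$. The Ellis--Numakura lemma then yields an idempotent $p=p+p\in\mathbb S$.

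\textbf{Step 2: the Galvin--Glazer recursion.} Since $\FF=\bigcup_{j=1}^r C_j$, some $C_{j_0}$ belongs to $p$. For $A\in p$ put $A^\star:=\{x\in A\,|\,A-x\in p\}$. Idempotence gives $A^\star\in p$, and for $x\in A^\star$ one has $A^\star-x\in p$. We choose the $\xi_k$ recursively, maintaining sets $B_k\in p$ with $B_1=C_{j_0}^\star$. At stage $k$, pick
\[
\xi_k\in B_k\cap\{\xi\neq0_\FF\,|\,\min\alpha_\xi>\max\alpha_{\xi_{k-1}}\},
\]
which is possible because this set lies in $p\in\mathbb S$. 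Then set
\[
B_{k+1}:=B_k\cap\bigcap_{\beta\subseteq\{1,\dots,k\},\,\beta\neq\emptyset}\bigl(C_{j_0}^\star-\textstyle\sum_{i\in\beta}\xi_i\bigr).
\]
The invariant to carry along is that every finite sum $\sum_{i\in\beta}\xi_i$ with $\beta\subseteq\{1,\dots,k\}$ non-empty lies in $C_{j_0}^\star$. From this invariant, the standard property of $A^\star$ shows that $B_{k+1}\in p$. It also gives inductively that all finite sums of the $\xi_k$ lie in $C_{j_0}$.

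\textbf{Main obstacle.} The only point needing care is Step 1, namely ensuring the idempotent is not the principal ultrafilter at $0_\FF$. That ultrafilter is idempotent, but it would give nothing, and ruling it out is exactly why we restrict to $\mathbb S$. The semigroup-closure check above is where the argument has to be done carefully. Everything else is the routine verification that $A^\star\in p$ and that $A^\star-x\in p$ for $x\in A^\star$.
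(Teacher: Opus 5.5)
Your proof is correct, but the paper gives nothing to compare it with step by step. It does not prove this statement; it attributes it to Hindman \cite{HIPPartitionRegular}, whose result is the finite sums theorem in $\N$, and adds only the remark that every infinite subgroup of $\FF$ is isomorphic to $\FF$. The citation therefore implicitly relies on translating Hindman's theorem into its finite-unions form, for instance via binary expansions and passing to a sum subsystem whose binary supports are disjoint and increasing. You instead give a self-contained Galvin--Glazer argument run directly in $(\FF,+)$.

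The points you flag check out. For $x\in U_n:=\{\xi\neq 0_\FF\,|\,\min\alpha_\xi>n\}$ one has $U_n-x\supseteq U_{\max\alpha_x}$, so $\mathbb S$ is a non-empty closed subsemigroup and Ellis--Numakura applies. Since $x=-x$ in $\FF$, there is no left/right ambiguity in $A-x$. The invariant gives $B_{k+1}\in p$ and places every non-empty finite sum in $C_{j_0}^\star\subseteq C_{j_0}$.

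Restricting to $\mathbb S$ is convenient rather than essential. Any idempotent in $\beta\FF\setminus\FF$ would do, provided each $\xi_k$ is chosen outside the span of $\xi_1,\dots,\xi_{k-1}$. What $\mathbb S$ buys is a stronger conclusion: $H$ is generated by elements with block-ordered supports, $\max\alpha_{\xi_k}<\min\alpha_{\xi_{k+1}}$. The paper recovers exactly this kind of structure separately when it applies Hindman's theorem in Appendix B, through Lemma \ref{B.lem:NiceInjectiveRestrictionForVIP} and Remark \ref{0.VIPficationRemark}, so that $v\circ\varphi$ becomes a VIP-system. With your version, the Hindman-produced homomorphism would already have that property.
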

\begin{rem}\label{0.rem"IPvsIP*Lim}
Let $(X,d)$ be a compact metric space and let $(x_\xi)_{\xi\in\FF}$ be a sequence in $X$. 
    Recall that 
$\mathop{\text{{\rm IP-lim}}}_{\xi\in\FF}\; x_{\xi}=x
$ if as $\min \alpha_\xi\to\infty$, one has that $x_\xi\to x$. We now explain the sense in which IP-limits and IP$^*$-limits are quasi-equivalent:
\begin{enumerate}
    \item [-] If $\mathop{\text{{\rm IP-lim}}}_{\xi\in\FF}\; x_{\xi}=x
$, then $\mathop{\text{{\rm IP$^*$-lim}}}_{\xi\in\FF}\; x_{\xi}=x
$.
\item [-] On the other hand, if  $\mathop{\text{{\rm IP$^*$-lim}}}_{\xi\in\FF}\; x_{\xi}=x
$, one has that for every injective homomorphism $\varphi:\FF\rightarrow\FF$ for which $\mathop{\text{{\rm IP-lim}}}_{\xi\in\FF}\; x_{\varphi(\xi)}$ exists,  the  limit must be $x$. (Notice that because $X$ is compact, Hindman's theorem implies that at least one such homomorphism exists.)
\end{enumerate}
\end{rem}
\begin{example}\label{0.ExampleOfConvergence}
Let $d\in\N$ and let $(X,\mathcal A,\mu,(T^\xi)_{\xi\in\mathbb F_2^\omega}
)$, $p_d$, and $A_\delta\in\mathcal A$ with $\mu(A_\delta)=\delta$, $\delta\in (0,1/2]$, be as in the statement of Theorem \hyperlink{0.thm:FailureOfPolynomialKhintchine}{A}. One can show that for every $\delta\in (0,1/2]$, 
\begin{equation}\label{0.eq:IP*LimitThmA}
    \mathop{\text{{\rm IP$^*$-lim}}}_{\xi\in\FF}\,\mu(A_\delta\cap T^{-p_d(\xi)}A_\delta)=\frac{\delta}{2^d}.
\end{equation}
Indeed, our proof of Theorem \hyperlink{0.thm:FailureOfPolynomialKhintchine}{A} implies that
$$
\lim_{|\alpha_\xi|\rightarrow\infty}\mu(A_\delta\cap T^{-p_d(\xi)}A_\delta)=\frac{\delta}{2^d}.
$$
So, since for any injective homomorphism $\varphi:\FF\rightarrow\FF$ and any $N\in\N$, one can always find a $\xi\in\FF$ with 
$|\alpha_{\varphi(\xi)}|\geq N$, it follows that \eqref{0.eq:IP*LimitThmA} holds.\\
Notice that Hindman's theorem implies that for every $\epsilon>0$ there is an injective $\varphi:\FF\rightarrow\FF$ with the property that for every non-zero $\xi\in\FF$, $$|\mu(A_\delta\cap T^{-p_d(\varphi(\xi))}A_\delta)-\frac{\delta}{2^d}|<\epsilon\cdot\delta.$$
Thus, the IP$^*$-limit statement \eqref{0.eq:IP*LimitThmA}, combined with Hindman's theorem, guarantees the existence of an injective homomorphism $\tilde\varphi_{d,\epsilon}$ along which the estimate in  \eqref{0.eq:SmallIntersections} holds uniformly for all non-zero $\xi\in\FF$, although the argument presented here may produce a homomorphism depending also on $\delta$.
\end{example}
We are now in position to state  a version of \cite[Conjecture 2]{berMcCuIPPolySzemeredi} adapted to the terminology of this paper.
 \begin{conjecture}[Conjecture 2 in \cite{berMcCuIPPolySzemeredi}]\label{0.Conjecture2}
Let $\mathcal H$ be a Hilbert space and let $(V_\xi)_{\xi\in\FF}$ be a VIP-system of unitary operators acting on $\mathcal H$. Suppose that for every $f\in \mathcal H$, there is a $Qf\in\mathcal H$ with  
$$\mathop{\text{{\rm IP-lim}}}_{\xi\in\FF} V_{\xi}f=Qf$$
weakly. Then, $\Re(\langle f,Qf\rangle)\geq 0$ or,  equivalently, 
$$
\mathop{\text{{\rm IP$^*$-lim}}}_{\xi\in\FF} \Re(\langle f, V_{\xi}f\rangle) \geq 0.
$$
 \end{conjecture}
In Appendix \ref{B.Sec} we will show that Conjectures \ref{0.Conjecture A} and \ref{0.Conjecture2} are equivalent. While the fact that Conjecture \ref{0.Conjecture2} implies Conjecture \ref{0.Conjecture A} is transparent, the other direction will make use of a sharp condition connecting nice recurrence for Gaussian systems and measurable sets of measure $1/2$ with arbitrary positive definite sequences.  
\subsection{Connections with the density polynomial Hales-Jewett conjecture}\label{0.sec:dPHJ}
Theorem \hyperlink{0.thm:FailureOfPolynomialKhintchine}{A} shows that Corollary \ref{0.cor:FScor} does not extend to arbitrary countably infinite abelian groups. In particular, Theorem \hyperlink{0.thm:FailureOfPolynomialKhintchine}{A} suggests that the techniques used to establish variants of \cref{0.FSTheorem} in \cite{BFM}, \cite{BHM}, \cite{BDonaldRobertsonIP_r}, \cite{McCutWSarkozy2014}, \cite{AckBer2025RingsOfIntegers} do not suffice to extend the Furstenberg-S{\'a}rk{\"o}zy theorem to arbitrary countable abelian groups. 
In this subsection we discuss the type of positive results that one could still expect despite the consequences of Theorem \hyperlink{0.thm:FailureOfPolynomialKhintchine}{A}. 
We  proceed in three steps: 
First, we record how a suitable variant of the density polynomial Hales-Jewett conjecture (DPHJ) would imply a result involving recurrence along (not necessarily $\FF$-valued)  VIP-systems. 
Then, we discuss  how Theorem \hyperlink{0.thm:FailureOfPolynomialKhintchine}{A} constrains this recurrence result. 
Finally, we show that the methods we employ to prove Theorem \hyperlink{0.thm:FailureOfPolynomialKhintchine}{A} cannot be easily modified to disprove DPHJ. 
\subsubsection{A consequence of DPHJ}
 We begin by stating the following restricted form of the density polynomial Hales-Jewett conjecture.
\begin{conjecture}[Cf. Conjecture 4 in  \cite{GowersDPHJjBlog}]\label{0.DPHJSpecialCase}
    Let $d\in\N$ and let $\delta\in(0,1)$. There exists  an $N:=N(d,\delta)$ in $\N$ with the property that for any $S\subseteq \mathcal P(\{1,...,N\}^{d})$ with $|S|>\delta2^{N^d}$ there exist an $A\in S$ and a non-empty $\gamma\subseteq \{1,...,N\}$ with the properties that $A\cap \gamma^d=\emptyset$ and $A\cup \gamma^d\in S$.
\end{conjecture}
\begin{rem}
The density polynomial Hales-Jewett conjecture (DPHJ) is widely regarded as one of the central open problems in Ramsey theory. It was first proposed by V. Bergelson in \cite[p. 56]{ERTaU} and is expected to refine the Polynomial Hales-Jewett theorem \cite{BerLeibPolyHJ} in much the same way that the density Hales-Jewett theorem of H. Furstenberg and Y. Katznelson \cite{FurKatzDHJ} strengthens the classical Hales-Jewett theorem \cite{HalesJewett1963}.
\end{rem}
We next state the conditional strengthened version of the Furstenberg-S{\'a}rk{\"o}zy theorem mentioned above. It can be deduced from Conjecture \ref{0.DPHJSpecialCase} in a way similar to the one in which (i)$\implies$(ii) in Theorem \ref{C.thm:EquivalentForms} is proved (see also \cite[Theorem 6.15]{berMcCuIPPolySzemeredi}, \cite[Appendix A]{BerZel-NiceRecurrence}). 
\begin{prop}\label[proposition]{0.prop:ErgodicConsequenceOfDPHJ}\label{0.prop.StrongRec}
   Let $d\in\N$ and $\delta\in (0,1]$. Suppose that there is a $t\in(0,1)$ for which Conjecture \ref{0.DPHJSpecialCase} holds for the pair $(d,(1-t)\delta)$. Then, there is a constant $c_{d,\delta}>0$ with the property that for any countable  abelian group $G$, any probability preserving system $(X,\mathcal A,\mu,(T^g)_{g\in G})$, any $A\in\mathcal A$ with $\mu(A)\geq \delta$, any $\epsilon>0$, and any VIP-system $v:\FF\rightarrow G$ of degree at most $d$, there is a non-zero $\xi_\epsilon\in\FF$ with 
    \begin{equation*}
\mu(A\cap T^{-v(\xi_\epsilon)}A)> c_{d,\delta}-\epsilon.
    \end{equation*}
Equivalently, for any injective homomorphism $\varphi:\FF\rightarrow\FF$ for which $\mathop{\text{{\rm IP$^*$-lim}}}_{\xi\in\FF}\mu(A\cap T^{-v(\varphi(\xi))}A)$ exists, one has that  
\begin{equation}\label{0.eq:DPHJLowerBound}
\mathop{\text{{\rm IP$^*$-lim}}}_{\xi\in\FF}\mu(A\cap T^{-v(\varphi(\xi))}A)\geq c_{d,\delta}.
\end{equation}
\end{prop}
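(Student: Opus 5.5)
Assume Conjecture \ref{0.DPHJSpecialCase} holds for $(d,(1-t)\delta)$ with $N:=N(d,(1-t)\delta)$. The plan is a Furstenberg-correspondence-style averaging over a finite cube of the VIP-system. The candidate constant is
\[
c_{d,\delta}=\frac{t\delta}{2^{N^d}},
\]
or a comparable explicit quantity depending only on $d,\delta,t$.

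First, pick generators adapted to $\varphi$. Given a VIP-system $v$ of degree at most $d$, choose pairwise disjoint finite sets $\alpha_1,\dots,\alpha_N\in\mathcal F$ (far out, so that any $\varphi$-restriction is respected). For $\gamma\subseteq\{1,\dots,N\}$ write $\alpha_\gamma=\bigcup_{i\in\gamma}\alpha_i$. The degree condition \eqref{0.eq:DefnVIP} implies, by the standard Möbius-inversion argument, that there are group elements $g_{\mathbf i}\in G$ indexed by $\mathbf i\in\{1,\dots,N\}^d$ such that
\[
v(\alpha_\gamma)=\sum_{\mathbf i\in\gamma^d}g_{\mathbf i}\qquad\text{for all }\gamma.
\]
This is the $d$-dimensional analogue of $g_\alpha=\sum_{j\in\alpha}g_j$. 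To build the $g_{\mathbf i}$, take $w(\beta)=\sum_{\emptyset\ne\beta'\subseteq\beta}(-1)^{|\beta\setminus\beta'|}v(\alpha_{\beta'})$, which vanishes for $|\beta|>d$. Then distribute $w(\beta)$ over the tuples $\mathbf i$ whose set of coordinates is exactly $\beta$, for instance by assigning it to one chosen such tuple. This step is routine, and it is where I will check that $\deg v\le d$ is exactly what is needed.

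Second, for each $x\in X$ and $S\subseteq\{1,\dots,N\}^d$, set $h_S=\sum_{\mathbf i\in S}g_{\mathbf i}$ and $\mathcal S_x=\{S : T^{h_S}x\in A\}$. By invariance, $\int|\mathcal S_x|\,d\mu=2^{N^d}\mu(A)\ge 2^{N^d}\delta$. Let $B=\{x : |\mathcal S_x|>(1-t)\delta 2^{N^d}\}$; a Markov-type estimate gives $\mu(B)\ge t\delta$. For $x\in B$, Conjecture \ref{0.DPHJSpecialCase} yields $S\in\mathcal S_x$ and a nonempty $\gamma$ with $S\cap\gamma^d=\emptyset$ and $S\cup\gamma^d\in\mathcal S_x$. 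Then $T^{h_S}x\in A\cap T^{-v(\alpha_\gamma)}A$, since $h_{S\cup\gamma^d}=h_S+v(\alpha_\gamma)$. Pigeonholing over the finitely many pairs $(S,\gamma)$ gives
\[
\sum_{\gamma\neq\emptyset}\mu\bigl(A\cap T^{-v(\alpha_\gamma)}A\bigr)\ge \frac{t\delta}{2^{N^d}},
\]
so some nonzero $\xi=\alpha_\gamma$ has $\mu(A\cap T^{-v(\xi)}A)\ge t\delta/2^{N^d}2^{-N}$, which is a positive constant depending only on $(d,\delta,t)$. Call it $c_{d,\delta}$; no $\epsilon$-loss is even needed.

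Third, for the IP$^*$-formulation, apply the above to the VIP-system $v\circ\varphi$, which is still of degree at most $d$, using generators $\alpha_i$ chosen inside any prescribed infinite subgroup. This shows that every infinite subgroup contains nonzero $\xi$ with $\mu(A\cap T^{-v(\varphi(\xi))}A)\ge c_{d,\delta}$, which forces the IP$^*$-limit to be at least $c_{d,\delta}$. Conversely, if \eqref{0.eq:DPHJLowerBound} holds for every $\varphi$ for which the limit exists, Hindman's theorem (Theorem \ref{0.thm:Hindman}) supplies such a $\varphi$, and hence a nonzero $\xi$. The main point of care is the first step, namely the cube decomposition of a general VIP-system that is not necessarily an IP-polynomial. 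Restricting to unions of disjoint blocks makes it a finite Möbius computation, so I expect it to go through.
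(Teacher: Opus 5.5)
Your argument is correct and is essentially the paper's. The paper deduces this proposition by running the proof of (i)$\Rightarrow$(ii) in Theorem~\ref{C.thm:EquivalentForms} (modelled on Bergelson--McCutcheon's Theorem 6.15) with $(1-t)\delta$ in place of $\delta$, and your M\"obius-inversion cube decomposition is a finite version of Lemma~\ref{C.lem:ShiftToSetPolynomial}.

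One small correction in your third step: $v\circ\varphi$ need not be a VIP-system for an arbitrary injective $\varphi$ (see Remark~\ref{0.VIPficationRemark}). You do not need it, though: it suffices to apply your second step to $v$ itself, taking as pairwise disjoint generators $\alpha_i$ the supports of suitable elements of $\varphi(H)$. Also, passing from ``every infinite subgroup contains a good $\xi$'' to the IP$^*$-limit bound uses Hindman's theorem, which you should invoke explicitly there.
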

\subsubsection{Constraints on the constants of the form $c_{d,\delta}$ that follow from Theorem A} 
Notice that \cite[Conjecture 2]{berMcCuIPPolySzemeredi} predicted that
for every $d$ and every $\delta$, the constant  $c_{d,\delta}$ in formula \eqref{0.eq:DPHJLowerBound} can be taken to  equal $\delta^2$ (regardless of the value of $d$).  Theorem \hyperlink{0.thm:FailureOfPolynomialKhintchine}{A} shows  that this cannot be the case. In fact, Theorem \hyperlink{0.thm:FailureOfPolynomialKhintchine}{A} implies that for any
$\delta\in (0,1/2]$ and any choice of $(c_{d,\delta})_{d\in\N}$, one necessarily has 
\begin{equation}\label{0.eq:LimitsAre0}
\lim_{d\rightarrow\infty}c_{d,\delta}=0.
\end{equation}
Observe that, in principle, formula \eqref{0.eq:LimitsAre0} is compatible with the assertion that for every VIP-system $v$ taking values in some abelian group $G$, one always has that for every probability preserving system $(X,\mathcal A,\mu,(T^g)_{g\in G})$, and every $A\in\mathcal A$ with $\mu(A)>0$, the set 
$$
R^{v}(A):=\{\xi\in\FF\,|\,\mu(A\cap T^{-v(\xi)}A)>0\}
$$
is syndetic. 
As a matter of fact, the extension of Corollary \hyperlink{0.prop:CorrectRateOfGrowth}{C} proved in Section \ref{5.Sec}
confirms that the coexistence of these phenomena occurs at the very least in the case where $G=\FF$ and $\mu(A)=1/2$. However, one may be tempted to interpret formula \eqref{0.eq:LimitsAre0} as an omen of the existence of a VIP-system $v$  for which a set of the form $R^v(A)$ fails to be syndetic, disproving DPHJ.\\
When we restrict our attention to the case when $\mu(A)=1/2$ and look at the functions of the form 
$$f_A=2(\mathbbm 1_A-\mu(A)),$$
one sees that  confirming that there is no such  $v$ is tantamount to answering negatively  the following question (as we do below).
\begin{question}\label{0.Q:NonRecurrence?}
    Is there a VIP-system $(V_\xi)_{\xi\in\FF}$   taking values in an abelian group of unitary operators acting on a Hilbert space $\mathcal H$ and  an  $f\in\mathcal H$ with $\|f\|=1$, for which
$$
\mathop{\text{{\rm IP$^*$-lim}}}_{\xi\in\FF}\;\Re(\langle f,V_{\xi}f\rangle)=-1?
$$
\end{question}
\subsubsection{VIP-recurrence for general abelian groups}
The result below demonstrates that the conclusion of Proposition \ref{0.prop.StrongRec} does hold when $\delta=1/2$ and therefore provides a negative answer to Question \ref{0.Q:NonRecurrence?}. 
 We remark that our proof of Proposition \hyperlink{0.prop:UnitaryLowerBound}{E} makes use of a highly restricted version of the density polynomial Hales-Jewett conjecture which we prove in Appendix \ref{A.Sec}.
\begin{namedthm*}{Proposition E}\hypertarget{0.prop:UnitaryLowerBound}
    Let $d\in\N$. There exists an $\epsilon_d>0$ with the property that  for any VIP-system $(V_\xi)_{\xi\in\FF}$ of degree at most $d$  taking values in an abelian group of unitary operators acting on a Hilbert space $\mathcal H$,  any $f\in\mathcal H$, and any injective homomorphism $\varphi:\FF\rightarrow \FF$ for which 
    $
\mathop{\text{{\rm IP$^*$-lim}}}_{\xi\in\FF}\;\langle f,V_{\varphi(\xi)}f\rangle
$
exists,
one has that 
\begin{equation}\label{0.eq:PropositionE}
\mathop{\text{{\rm IP$^*$-lim}}}_{\xi\in\FF}\;\Re(\langle f,V_{\varphi(\xi)}f\rangle)\geq \|f\|_\mathcal H^2(-1+\epsilon_d).
\end{equation}
\end{namedthm*}
\begin{rem}\label{0.VIPficationRemark}
This remark deals with the VIP-structure of the expression $(V_{\varphi(\xi)})_{\xi\in\FF}$ appearing in \eqref{0.eq:PropositionE}.
   Let $(G,+)$ be an abelian group, let $v:\FF\rightarrow G$ be a VIP-system of degree $d$, and let $\varphi:\FF\rightarrow\FF$ be an injective homomorphism. Notice  that if $\xi,\eta\in\FF\setminus\{0_\FF\}$ satisfy $\alpha_{\xi}\cap\alpha_{\eta}=\emptyset$ it may happen that  $\alpha_{\varphi(\xi)}\cap\alpha_{\varphi(\eta)}\neq\emptyset$ and, so, $v\circ\varphi$ may fail to be a VIP-system. Nonetheless, one can always find an injective homomorphism $\psi:\FF\rightarrow \FF$ with the property that for any non-zero $\xi,\eta\in\FF$, if $\max \alpha_\xi<\min \alpha_\eta$, then $\max \alpha_{\psi(\xi)}<\min \alpha_{\psi(\eta)}$ and  $\max \alpha_{\varphi(\psi(\xi))}<\min \alpha_{\varphi(\psi(\eta))}$ (see Lemma \ref{B.lem:NiceInjectiveRestrictionForVIP} below).  Thus, the map $v\circ \varphi\circ \psi$ is a VIP-system of degree at most $d$ which, in view of the definition of IP$^*$-limit, has the additional property that if   $\mathop{\text{{\rm IP$^*$-lim}}}_{\xi\in\FF}\, x_{v(\varphi(\xi))}$ exists, then 
   \begin{equation}
\mathop{\text{{\rm IP$^*$-lim}}}_{\xi\in\FF}\, x_{v(\varphi(\xi))}=\mathop{\text{{\rm IP$^*$-lim}}}_{\xi\in\FF}\, x_{v(\varphi(\psi(\xi)))}
   \end{equation}
(Thus, any such IP$^*$-limit  can be \textit{computed} by looking at the "VIP-sequence" $v(\varphi(\psi(\xi))),$ $\xi\in\FF$).
\end{rem}
\begin{rem}
At first glance, the proof of Proposition \hyperlink{0.prop:UnitaryLowerBound}{E} may seem to require a substantially more sophisticated argument than that of  Corollary \hyperlink{0.prop:CorrectRateOfGrowth}{C}.
 After all, $\FF$ is one of the simplest (if not the simplest) infinitely generated abelian groups, and a general VIP-system leaves more room for pathological behavior than an $\FF$-valued polynomial. 
 However,  with sufficient bookkeeping and error control, the Fourier-theoretic proof of Corollary \hyperlink{0.prop:CorrectRateOfGrowth}{C} in Section \ref{5.Sec} can be adapted to prove Proposition \hyperlink{0.prop:UnitaryLowerBound}{E} directly. We use the restricted DPHJ statement proved in Appendix \ref{A.Sec} instead in order to make visible the  combinatorial mechanism behind the argument. 
\end{rem}

The structure of this paper is as follows: In Section \ref{2.Sec} we define the sequence of polynomial maps $p_d:\FF\rightarrow \FF$, $d\in\N$,  appearing in the statement of Theorem \hyperlink{0.thm:FailureOfPolynomialKhintchine}{A}. In Section 3 we prove several technical lemmas dealing with Markov processes. These lemmas will facilitate the necessary estimates in the subsequent  section. In Section 4 we prove Theorem \hyperlink{0.thm:FailureOfPolynomialKhintchine}{A}.
In Section 5 we prove an extension of Corollary  \hyperlink{0.prop:CorrectRateOfGrowth}{C} dealing with VIP-systems.
In Section 6 we prove  Corollary \hyperlink{0.cor:CombinatorialApplication2}{D} and deduce a variant of Theorem \hyperlink{0.thm:FailureOfPolynomialKhintchine}{A}.
In Appendix \ref{A.Sec} we prove the highly restricted variant of DPHJ needed for the proof of Proposition \hyperlink{0.prop:UnitaryLowerBound}{E}.
In Appendix \ref{B.Sec} we establish a general result which provides a correspondence between measurable recurrence phenomena and positive definite sequences and which allows us to prove the equivalence of  Conjectures \ref{0.Conjecture A} and \ref{0.Conjecture2} and to prove Proposition \hyperlink{0.prop:UnitaryLowerBound}{E}.
In Appendix \ref{C.Sec} we establish various equivalent forms of a variant of the Furstenberg-S{\'a}rk{\"o}zy theorem for $\FF$-valued polynomials of degree at most $d$ and sets of measure at least $\delta>0$. \\

\textit{Acknowledgments.} Rigoberto Zelada is supported by EPSRC through Joel Moreira's Frontier Research Guarantee grant, ref. EP/Y014030/1. 
\section{Defining the maps $p_d$, $d\in\N$}\label{2.Sec}
In this section we define the maps $p_d:\mathbb F_2^\omega\rightarrow \mathbb F_2^\omega$ mentioned in the statement of Theorem \hyperlink{0.thm:FailureOfPolynomialKhintchine}{A} and prove that there is a sequence of injective homomorphisms $\Phi_N:\FF\rightarrow \FF$, $N\in\N$, with the property that for every $d\in\N$ and every $N\in\N$, $p_d\circ \Phi_N$ is  a polynomial of degree $d$ satisfying  $(p_d\circ \Phi_N)(0_{\mathbb F_2^\omega})=0_{\mathbb F_2^\omega}$. Before defining the sequences  $(p_d)_{d\in\N }$ and $(\Phi_N)_{N\in\N}$, we need to introduce some notation.
\subsection{Convenient representations for the elements of $\mathbb F_2^\omega$.}
For each $d\in\N$, let $\mathcal I_d:\N\rightarrow \N^{d}$ be a bijection. Fix $d\in\N$ and let $(x_n^{(1)})_{n\in\N}$,...,$(x_n^{(d)})_{n\in\N}$ be sequences of $\{0,1\}$-valued variables.  For every $\xi\in\FF$ we identify $\xi$ with the polynomial 
$
p_{d,\xi}
$, the $\mathbb F_2$-valued polynomial in the variables $(x_n^{(j)})_{n\in\N}$, $j\in\{1,...,d\}$, defined by 
$$p_{d,\xi}:=\sum_{i\in\alpha_\xi}x_{\mathcal I_d(i,1)}^{(1)}\cdots x_{\mathcal I_d(i,d)}^{(d)},$$
where we adopt the conventions that 
$$\mathcal I_d(i)=(\mathcal I_d(i,1),...,\mathcal I_d(i,d))$$
and that 
$\sum_{i\in\emptyset}x_{\mathcal I_d(i,1)}^{(1)}\cdots x_{\mathcal I_d(i,d)}^{(d)}=0_{\mathbb F_2}$. Via the identification $\xi\mapsto p_{d,\xi}$,  we obtain that  $(\mathbb F_2^\omega,+)$ is isomorphic to $\mathcal P_{d}$, the set of polynomials of the form $\sum_{i\in\alpha}x_{\mathcal I_d(i,1)}^{(1)}\cdots x_{\mathcal I_d(i,d)}^{(d)}$, $\alpha\in\mathcal F_\emptyset$, with addition $\mod 2$.
\subsection{The maps $p_d:\FF\rightarrow \FF$, $d\in\N$}
In order to define the maps $p_d:\FF\rightarrow \FF$, $d\in\N$, we will first define the maps $q_d:\FF\rightarrow \mathcal P_d$, $d\in\N$. For each $d\in\N$, let $q_d(0_\FF)=0_{\mathcal P_d}$ and for every non-zero $\xi\in\FF$, let 
\begin{equation}\label{2.eq:Defnq_d}
q_d(\xi)= \prod_{t=1}^d(\sum_{i\in\alpha_\xi} x_i^{(t)}).
\end{equation}
Identifying first $\FF$ with $\FF\times \FF$ and then $\FF\times \FF$ with $\mathcal P_d\times \FF$, we define 
\begin{equation}\label{2.eq:Defnp_d}
p_d:\FF\rightarrow \FF\simeq \mathcal P_d\times \FF
\end{equation}
by $p_d(\xi)=(q_d(\xi),0_\FF)$. Notice that when we view  $p_d$ as a map from $\FF$ to $\FF\times\FF$, we have that for any non-zero $\xi\in\FF$, $p_d(\xi)=(\eta,0_\FF)$, where $\eta\in\FF$ satisfies 
\begin{equation}\label{2.eq:Defnp_d'}
\prod_{t=1}^d(\sum_{i\in\alpha_\xi} x_i^{(t)})=\sum_{i\in\alpha_\eta}x_{\mathcal I_d(i,1)}^{(1)}\cdots x_{\mathcal I_d(i,d)}^{(d)}.
\end{equation}
In other words, for every $\xi\in\FF$, $p_d(\xi)=(p_{d,\eta}, 0_\FF)$. 
\\
For every $N\in\N$ and every $\alpha\in\mathcal F_\emptyset$, we let 
\begin{equation}\label{2.eq:dAmplification}
\langle \alpha\rangle_N:=\bigcup_{s=0}^{N-1}\{jN-s\,|\,j\in\alpha\}=\bigcup_{j\in\alpha}\{(j-1)N+1,...,jN\}.
\end{equation}
So, when $\alpha=\emptyset$, $\langle \alpha\rangle_N =\emptyset$ and  when $N=1$, $\langle \alpha\rangle_1=\alpha$. For each $N\in\N$, we will let $\Phi_N:\FF\rightarrow \FF$ be the injective homomorphism given by $\alpha_{\Phi_N(\xi)}=\langle \alpha_\xi\rangle_N$. \\
The following result shows that for any $N\in\N$,  $p_d\circ \Phi_N$ is a polynomial of degree $d$.
\begin{lem}\label{2.p_dIsVIPofDegreeD}
    Let $d\in\N$ and let $N\in\N$. The map  $p_d\circ \Phi_N$ is a polynomial of degree $d$.
\end{lem}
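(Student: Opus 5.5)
The plan is to show that $q_d$, and hence $p_d\circ\Phi_N$, is the diagonal of a symmetric-in-spirit $d$-linear map over $\mathbb F_2$. Then iterated discrete derivatives can be computed explicitly: the $d$-th one is constant and nonzero for suitable increments, and the $(d+1)$-th one vanishes.

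\textbf{Step 1: $q_d$ is the diagonal of a $d$-linear map.} For each $t$, the map $\xi\mapsto L_t(\xi):=\sum_{i\in\alpha_\xi}x_i^{(t)}$ is a group homomorphism from $\FF$ to the $\mathbb F_2$-span of the variables, since $\alpha_{\xi+\eta}=\alpha_\xi\triangle\alpha_\eta$ and coefficients are taken mod $2$. Expanding the product in \eqref{2.eq:Defnq_d} gives
$$q_d(\xi)=\sum_{(i_1,\dots,i_d)\in\alpha_\xi^d}x_{i_1}^{(1)}\cdots x_{i_d}^{(d)}.$$
This formula also holds for $\xi=0_\FF$, since both sides are then $0$. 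Via $\mathcal I_d$, each such monomial is a distinct basis element of $\mathcal P_d\simeq\FF$. Define
$$B(\xi_1,\dots,\xi_d)=\sum_{(i_1,\dots,i_d)\in\alpha_{\xi_1}\times\cdots\times\alpha_{\xi_d}}x_{i_1}^{(1)}\cdots x_{i_d}^{(d)}.$$
This map is additive in each variable separately, and $q_d(\xi)=B(\xi,\dots,\xi)$. Composing with the homomorphism $\Phi_N$ and with the group isomorphisms $\mathcal P_d\simeq\FF$ and $\eta\mapsto(\eta,0_\FF)$ into $\FF\times\FF\simeq\FF$ preserves all of this. Hence $p_d\circ\Phi_N$ corresponds to the diagonal of the $d$-linear map $B_N(\xi_1,\dots,\xi_d):=B(\Phi_N\xi_1,\dots,\Phi_N\xi_d)$, and it vanishes at $0_\FF$.

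\textbf{Step 2: degree at most $d$.} Using multilinearity, expand $D_hQ(x)$ for $Q(x)=B_N(x,\dots,x)$ as the sum of $B_N$ evaluated at all tuples with entries in $\{x,h\}$, excluding the all-$x$ tuple. Each term has fewer than $d$ copies of $x$. By induction on the number of derivatives, $D_{h_k}\cdots D_{h_1}Q(x)$ is a sum of $B_N$-values at tuples in which every $h_j$ occurs at least once, with all other entries equal to $x$. In particular,
$$D_{h_d}\cdots D_{h_1}Q(x)=\sum_{\sigma\in S_d}B_N(h_{\sigma(1)},\dots,h_{\sigma(d)}),$$
which is independent of $x$. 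One further derivative therefore gives $0$, so \eqref{0.eq:DefnDP} holds with $t=d$.

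\textbf{Step 3: degree at least $d$.} This is the only place requiring care. Take $h_j=\xi$ with $\alpha_{h_j}=\{k_j\}$ for distinct $k_1,\dots,k_d$. Then $\alpha_{\Phi_N h_j}=\{(k_j-1)N+1,\dots,k_jN\}$, and these blocks are pairwise disjoint. For each $\sigma$, the term $B_N(h_{\sigma(1)},\dots,h_{\sigma(d)})$ is the sum of the basis elements indexed by the nonempty product set of blocks taken in the order $\sigma$. Distinct permutations give disjoint product sets, because the blocks are disjoint. So the $d!$ terms involve pairwise disjoint nonempty sets of basis vectors, no cancellation mod $2$ occurs, and the $d$-th derivative is nonzero. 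Hence \eqref{0.eq:DefnDP} fails for $t=d-1$, and $\deg(p_d\circ\Phi_N)=d$.
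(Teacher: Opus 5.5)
Your proof is correct, but your upper bound takes a different route from the paper's. The paper never introduces the $d$-linear map $B$. Instead it expands $\sum_{\beta\subseteq\{0,\dots,d\}}q_d(\xi+\sum_{j\in\beta}\xi_j)$ monomial by monomial. It then shows that each coefficient $\gamma_{i_1,\dots,i_d}$ vanishes, because it equals the parity of the number of solutions of an affine system $L(\mathbf y)=c$ with $L:\mathbb F_2^{d+1}\to\mathbb F_2^d$. That solution set is empty or a coset of the nontrivial kernel, hence of even size.

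For the lower bound the paper does essentially what you do. With $n=d-1$ its increments are the singletons $\{1\},\dots,\{d\}$. It tracks only the monomials $\prod_t x^{(t)}_{tN-s_t}$ through the inclusion--exclusion sum, noting that they occur in $q_d(\Phi_N(\{1,\dots,d\}))$ and in no smaller union. These monomials are exactly the identity-permutation part of your $\sum_{\sigma\in S_d}B_N(h_{\sigma(1)},\dots,h_{\sigma(d)})$. The paper also repeats the argument for every $n\le d-1$, which is unnecessary.

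What your multilinear viewpoint buys:
\begin{itemize}
\item A single formula, that the $d$-th derivative is the symmetrization of $B_N$, gives both bounds at once.
\item It makes transparent that precomposing with $\Phi_N$, or any homomorphism, cannot raise the degree.
\item It applies verbatim to diagonals of multilinear maps between arbitrary abelian groups.
\end{itemize}
The paper's parity count is more hands-on, but it is tied to $\mathbb F_2$ and to the specific product form of $q_d$.

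Two points you leave implicit, both immediate:
\begin{itemize}
\item Failure of \eqref{0.eq:DefnDP} at $t=d-1$ forces failure at every smaller $t$, since a further derivative of the zero map is zero.
\item Post-composing with the injective homomorphism $\eta\mapsto(\eta,0_\FF)$ leaves the degree unchanged.
\end{itemize}
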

\begin{proof}
Fix the natural numbers $d$ and $N$. In order to prove that $p_d\circ\Phi_N$ is a polynomial of degree $d$ it is sufficient to show that $q_d$ is a polynomial of degree not more than $d$ and that $q_d\circ\Phi_N$  has degree strictly larger than $d-1$. 
We will first show that $q_d$ is a polynomial of degree at most $d$.\\

$\bullet$ \textit{ The degree of $q_d$ is at most $d$.} Pick any $\xi,\xi_0,...,\xi_{d}\in \FF$. We will prove that 
$$D_{\xi_d}\cdots D_{\xi_0}q_d(\xi)=0_\FF.$$
To do this, first note that 
\begin{multline*}
D_{\xi_d}\cdots D_{\xi_0}q_d(\xi)\equiv\sum_{\beta\subseteq \{0,...,d\}}(-1)^{(d+1)-|\beta|}q_d(\xi+\sum_{j\in\beta}\xi_j)
\equiv\sum_{\beta\subseteq \{0,...,d\}}q_d(\xi+\sum_{j\in\beta}\xi_j)\\
\equiv\sum_{\beta\subseteq\{0,...,d\}}\prod_{t=1}^d(\sum_{i\in\alpha_\xi}x_i^{(t)}+\sum_{j\in\beta}\sum_{i\in\alpha_{\xi_j}}x_i^{(t)})\equiv\sum_{i_1,...,i_d\in \alpha_\xi\cup\bigcup_{j=0}^d\alpha_{\xi_j}}\gamma_{i_1,...,i_d}x_{i_1}^{(1)}\cdots x_{i_d}^{(d)}\mod 2,
\end{multline*}
where for each $i_1,...,i_d\in \alpha_\xi\cup\bigcup_{j=0}^d\alpha_{\xi_j}$, the constant $\gamma_{i_1,...,i_d}$ is given by 
$$
\gamma_{i_1,...,i_d}\equiv\sum_{\beta\subseteq\{0,...,d\}}\prod_{t=1}^d(\mathbbm 1_{\alpha_\xi}(i_t)+\sum_{j\in\beta}\mathbbm 1_{\alpha_{\xi_j}}(i_t))\mod 2.
$$
To complete the proof of the fact that $\deg(q_d)\leq d$, we will show that for any given $i_1,...,i_d\in \alpha_\xi\cup\bigcup_{j=0}^d\alpha_{\xi_j}$, $\gamma_{i_1,...,i_d}\equiv 0\mod 2$. For each $t\in\{1,...,d\}$, define the homomorphism $\sigma_t:\mathbb F_2^{d+1}\rightarrow \mathbb F_2$ by 
$$
\sigma_t(y_0,...,y_d)\equiv\sum_{j=0}^d y_j\mathbbm 1_{\alpha_{\xi_j}}(i_t)\mod 2
$$
and note that $\gamma_{i_1,...,i_d}\equiv 1\mod 2$ if and only if the number of solutions $(y_0,...,y_d)\in \mathbb F_2^{d+1}$ to the system of equations
\begin{equation}\label{eq:SystemOfEquations}
1\equiv \mathbbm 1_{\alpha_\xi}(i_t)+\sigma_t(y_0,...,y_d)\mod 2,\,t\in\{1,...,d\},
\end{equation}
is odd. 
Define the homomorphism $L:\mathbb F_2^{d+1}\rightarrow \mathbb F_2^d$ by 
$$L({\bf y})=(\sigma_1({\bf y}),...,\sigma_d({\bf y})).
$$
It follows that the set of solutions of \eqref{eq:SystemOfEquations}  coincides with 
$L^{-1}(\mathbbm 1_{\alpha_\xi}(i_1)+1,...,\mathbbm 1_{\alpha_\xi}(i_d)+1))$. Noting that $|\text{Ker}(L)|\cdot|L(\mathbb F^{d+1}_2)|=|\mathbb F^{d+1}_2|$, we obtain that the number of solutions to 
\eqref{eq:SystemOfEquations} must be even.\\

$\bullet$ \textit{ For any $N\in\N$, $q_d\circ \Phi_N$ has degree more than $d-1$.} 
Assume first that 
$d=1$. By  our definition of the degree of a group-theoretical polynomial, we must have $\deg(q_d\circ\Phi_N)\in\N$. It follows that $\deg(q_d\circ\Phi_N)=1$. Assume now that $d>1$, and consider any $n\in\{0,...,d-1\}$.
Let $\alpha_0,...,\alpha_n\subseteq\{1,...,d\}$ be non-empty, pairwise disjoint sets with the property that $\bigcup_{j=0}^n\alpha_j=\{1,...,d\}$. It follows that 
 for any $s_1,...,s_{d}\in\{0,...,N-1\}$, the term
\begin{equation}\label{2.eq:GoodMonomial}
\prod_{t=1}^d x_{tN-s_t}^{(t)}
\end{equation}
does not appear in any of the polynomials
$$q_d(\Phi_N(\alpha_{i_1}\triangle\cdots\triangle \alpha_{i_r})),\,0\leq i_1<\cdots< i_{r}\leq n\text{ and }r<n+1.$$
Furthermore, every term of the form \eqref{2.eq:GoodMonomial} appears exactly once in $$q_d(\Phi_N(\alpha_{0}\triangle\cdots\triangle \alpha_{n}))=q_d(\Phi_N(\{1,...,d\})).$$ Thus,  every term of the form \eqref{2.eq:GoodMonomial} appears exactly once in the expression 
\begin{equation}\label{2.eq:VIPExpression}
D_{\alpha_0}\cdots D_{\alpha_n}(q_d\circ \Phi_N)(0_\FF)\equiv\sum_{r=1}^{n+1}\sum_{0\leq i_1<\cdots< i_{r}\leq n}q_d(\Phi_N(\alpha_{i_1}\triangle\cdots\triangle \alpha_{i_r}))+q_d(\Phi_N(0_\FF))\mod 2,
\end{equation}
proving that \eqref{2.eq:VIPExpression} is non-zero and, so, that $q_d\circ \Phi_N$ has degree at least $d$.
We are done. 
\end{proof}
\begin{rem}\label{2.rem:VipDegOFp_d}
    Let $d,N\in\N$. Notice that our proof of  Lemma \ref{2.p_dIsVIPofDegreeD} also shows that, when restricted to $\FF\setminus\{0_\FF\}$, the map $p_d\circ\Phi_N$ is a VIP-system of degree $d$.
\end{rem}
\section{Preliminary stochastic constructions}
The following two results are needed for the proof of Theorem \hyperlink{0.thm:FailureOfPolynomialKhintchine}{A}. They generalize the Markov-process estimates used in the proof of \cite[Theorem A]{ZelIP0Khintchine2023}.
\begin{lem}\label[lemma]{3.lem:EstimateLemma}
    Let $d\in\N$ and let $(y_n)_{n\in\N}$ be a sequence of independent, uniformly distributed $\{0,...,d\}$-valued random variables. For each $N\in\N$ and each $j\in\{1,...,d\}$, set 
    $$
\Sigma_j^{(N)}=\sum_{n=1}^N\mathbbm 1_{\{j\}}(y_n).
    $$
Then, for every $\epsilon>0$ there is an $N_\epsilon\in\N$ such that for any $N\geq N_\epsilon$ and any $A\subseteq \{1,...,d\}$,
$$
|\mathbb P(\bigcap_{j=1}^{d}\{\Sigma_j^{(N)}\equiv \mathbbm 1_A(j)\mod 2\})-\frac{1}{2^{d}}|<\epsilon. 
$$
\end{lem}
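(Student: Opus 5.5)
We will compute the probability exactly with a characters (Fourier) argument on $\mathbb F_2^d$. For each $n$, set ${\bf Z}_n\in\mathbb F_2^d$ to be the vector with coordinates $\mathbbm 1_{\{j\}}(y_n)\bmod 2$, $j\in\{1,...,d\}$. Then ${\bf Z}_n$ is $0$ with probability $\frac{1}{d+1}$ (when $y_n=0$) and equals the standard basis vector $e_j$ with probability $\frac{1}{d+1}$ for each $j$. The vector $(\Sigma_j^{(N)}\bmod 2)_{j=1}^d$ is exactly ${\bf S}_N:={\bf Z}_1+\cdots+{\bf Z}_N$, a random walk on the finite group $\mathbb F_2^d$ whose steps are i.i.d. The event in the statement is $\{{\bf S}_N={\bf a}\}$, where ${\bf a}=(\mathbbm 1_A(j))_{j=1}^d$.

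By Fourier inversion on $\mathbb F_2^d$ and independence of the ${\bf Z}_n$,
$$
\mathbb P({\bf S}_N={\bf a})=\frac{1}{2^d}\sum_{{\bf w}\in\mathbb F_2^d}(-1)^{{\bf w}\cdot{\bf a}}\,\big(\mathbb E[(-1)^{{\bf w}\cdot{\bf Z}_1}]\big)^N .
$$
For ${\bf w}$ with $k=|\{j:w_j=1\}|$ nonzero coordinates, we have $\mathbb E[(-1)^{{\bf w}\cdot{\bf Z}_1}]=\frac{(d+1-k)-k}{d+1}=1-\frac{2k}{d+1}$. The ${\bf w}=0$ term contributes exactly $\frac{1}{2^d}$. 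For $k\in\{1,...,d\}$ the quantity $|1-\frac{2k}{d+1}|$ is at most $\frac{d-1}{d+1}<1$. The extreme cases are $k=1$ and $k=d$, where it equals $\pm\frac{d-1}{d+1}$; the value $-1$ would require $k=d+1$, which is impossible, and this is the one point to check. Hence
$$
\Big|\mathbb P({\bf S}_N={\bf a})-\frac1{2^d}\Big|\le \Big(\frac{d-1}{d+1}\Big)^N\cdot\frac{2^d-1}{2^d}\le\Big(\frac{d-1}{d+1}\Big)^N,
$$
uniformly in $A$. Choosing $N_\epsilon$ with $(\frac{d-1}{d+1})^{N_\epsilon}<\epsilon$ finishes the proof; when $d=1$ the bound is $0$, so any $N_\epsilon$ works.

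There is no serious obstacle. The only step needing care is the verification that no nontrivial character has $|\mathbb E[(-1)^{{\bf w}\cdot{\bf Z}_1}]|=1$, which is exactly where the presence of the outcome $0$ among the values of $y_n$ (giving aperiodicity of the walk) is used. If one preferred the Markov-process language of the paper, the same argument reads as follows: ${\bf S}_N$ is an irreducible, aperiodic Markov chain on $\mathbb F_2^d$ with doubly stochastic transition matrix, so it converges to the uniform distribution. The character computation above gives the explicit rate.
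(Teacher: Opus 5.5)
Your proof is correct, and it takes a different, though closely related, route from the paper's.

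The paper encodes the parity vector $(\Sigma_j^{(N)} \bmod 2)_{j}$ as a Markov chain on the $2^d$ subsets of $\{1,\dots,d\}$. Its transition matrix $M$ has entries $\frac{1}{d+1}$ when $A_i\triangle A_j$ is a singleton or $i=j$. The paper notes that $M$ is doubly stochastic and that $M^{2^d}$ has all entries positive. It then invokes the Perron--Frobenius theorem to get $M^N\to\frac{1}{2^d}J$, with $J$ the all-ones matrix. This yields the lemma without any rate.

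You instead use that $M$ is a convolution operator on the group $\mathbb F_2^d$, so it is diagonalized by the characters $\mathbf a\mapsto(-1)^{\mathbf w\cdot\mathbf a}$. Your computation amounts to showing that the spectrum of $M$ is $\{1-\frac{2k}{d+1}:0\le k\le d\}$, with multiplicities $\binom{d}{k}$. This buys you three things: an exact formula for the probability, the uniform rate $\left(\frac{d-1}{d+1}\right)^N$, and an explicit $N_\epsilon$. The last one makes the choice of $N$ in the proof of Theorem A explicit, and you avoid Perron--Frobenius entirely. The paper's argument is softer, using only double stochasticity and primitivity of $M$.

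The lazy outcome $y_n=0$ plays the same role in both arguments. In the paper it supplies the positive diagonal of $M$ that makes the chain aperiodic. In yours it is exactly what keeps the character $\mathbf w=(1,\dots,1)$ from having eigenvalue $-1$.
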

\begin{proof}
    Let $A_1,...,A_{2^{d}}$ be an enumeration of the  subsets of  $\{1,...,d\}$ with $A_1=\emptyset$ and let $M\in \R^{2^{d}\times 2^{d}}$ be the matrix given by 
    $$
M_{i,j}=\begin{cases}
    \frac{1}{d+1},\text{ if }\exists t\in\{1,...,d\},\,A_i\triangle A_j=\{t\},\\
    \frac{1}{d+1},\text{ if }i=j,\\
    0,\, \text{ otherwise.}
\end{cases}
    $$
It is not hard to check that $M$ is a stochastic matrix and that every entry of the matrix $M^{2^d}$ is a positive real number. Noting that $(1,...,1)M=(1,...,1)$ and $M(1,...,1)^T=(1,...,1)^T$, by the  Perron-Frobenius theorem we obtain
$$
\lim_{N\rightarrow\infty}M^N=\begin{pmatrix}
    \frac{1}{2^d}&\cdots&\frac{1}{2^d}\\
    \vdots &\ddots& \vdots\\
    \frac{1}{2^d}&\cdots&\frac{1}{2^d}
\end{pmatrix}.
$$
Noting that 
$$
\lim_{N\rightarrow\infty} \begin{pmatrix}
    \mathbb P(\bigcap_{j=1}^{d}\{\Sigma_j^{(N)}\equiv \mathbbm 1_{A_1}(j)\mod 2\})\\
    \mathbb P(\bigcap_{j=1}^{d}\{\Sigma_j^{(N)}\equiv \mathbbm 1_{A_2}(j)\mod 2\})\\
    \vdots\\
    \mathbb P(\bigcap_{j=1}^{d}\{\Sigma_j^{(N)}\equiv \mathbbm 1_{A_{2^d}}(j)\mod 2\})
\end{pmatrix}=\lim_{N\rightarrow\infty}M^N\begin{pmatrix}
        1\\ 0 \\\vdots \\ 0
    \end{pmatrix}=\begin{pmatrix}
        \frac{1}{2^d}\\ \frac{1}{2^d} \\\vdots \\ \frac{1}{2^d}
    \end{pmatrix}
$$
we see that the result follows. 
\end{proof}
\begin{lem}\label{3.lem:MainEstimate}
    Fix $d\in\N$, $\epsilon>0$, and $k\in\N$.  Let $(y_n)_{n\in\N}$, $N:=N_{\epsilon}$,  and $\Sigma_j^{(kN)}$,  $j\in\{1,...,d\}$, be as in the statement of \Cref{3.lem:EstimateLemma}. Then, 
    $$
\left|\mathbb P(\sum_{t=1}^d\sum_{1\leq i_1<\cdots<i_t\leq d}\prod_{s=1}^t\Sigma_{i_s}^{(kN)}\equiv 0\mod 2)-\frac{1}{2^d}\right|<\epsilon.
    $$
\end{lem}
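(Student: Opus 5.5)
The plan is to reduce the parity condition to a statement about the joint parity vector of $(\Sigma_1^{(kN)},\dots,\Sigma_d^{(kN)})$ and then apply \Cref{3.lem:EstimateLemma}.

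First, the polynomial is the sum of all elementary symmetric polynomials of positive degree. Hence, modulo $2$,
$$
\sum_{t=1}^d\sum_{1\leq i_1<\cdots<i_t\leq d}\prod_{s=1}^t\Sigma_{i_s}^{(kN)}\equiv \prod_{j=1}^d\bigl(1+\Sigma_j^{(kN)}\bigr)-1 \mod 2.
$$
The product $\prod_{j}(1+\Sigma_j^{(kN)})$ is odd exactly when every $\Sigma_j^{(kN)}$ is even. So the sum is $\equiv 0\mod 2$ precisely on the event $\bigcap_{j=1}^d\{\Sigma_j^{(kN)}\equiv 0\mod 2\}$. This is the event in \Cref{3.lem:EstimateLemma} with $A=\emptyset$.

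Second, I apply \Cref{3.lem:EstimateLemma} with $A=\emptyset$ at time $kN$. Since $N=N_\epsilon$ and $k\geq 1$, we have $kN\geq N_\epsilon$. The lemma then gives
$$
\Bigl|\mathbb P\Bigl(\bigcap_{j=1}^{d}\{\Sigma_j^{(kN)}\equiv 0 \mod 2\}\Bigr)-\frac{1}{2^d}\Bigr|<\epsilon,
$$
which is the claimed estimate.

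The only point needing care is the algebraic identity in the first step. I would verify it by expanding $\prod_j(1+a_j)=\sum_{\beta\subseteq\{1,\dots,d\}}\prod_{j\in\beta}a_j$ and noting that only the empty subset is missing from the sum in the statement. I would also use that, over $\mathbb F_2$, the product $\prod_j(1+a_j)$ equals $1$ iff each $a_j\equiv 0$. Beyond this there is no real obstacle, because the conclusion of \Cref{3.lem:EstimateLemma} holds for all $N\geq N_\epsilon$, in particular for $kN_\epsilon$.
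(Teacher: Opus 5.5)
Your proposal is correct and follows the paper's proof: you rewrite the sum as $\prod_{j=1}^d(1+\Sigma_j^{(kN)})-1 \bmod 2$, so the event that it is $\equiv 0 \bmod 2$ is exactly the event that every $\Sigma_j^{(kN)}$ is even. You then apply \Cref{3.lem:EstimateLemma} with $A=\emptyset$, using $kN\geq N_\epsilon$.
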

\begin{proof}
Observe that 
$$\sum_{t=1}^d\sum_{1\leq i_1<\cdots<i_t\leq d}\prod_{s=1}^t\Sigma_{i_s}^{(kN)}\equiv \prod_{t=1}^d(1+\Sigma_t^{(kN)})+1\mod 2.$$
Thus, one can find a $j\in\{1,...,d\}$ with $\Sigma_j^{(kN)}\equiv 1\mod 2$ if and only if 
    $$\sum_{t=1}^d\sum_{1\leq i_1<\cdots<i_t\leq d}\prod_{s=1}^t\Sigma_{i_s}^{(kN)}\equiv 1\mod 2.$$
    Invoking  \Cref{3.lem:EstimateLemma}, the result follows.
\end{proof}
\section{Proof of Theorem A}\label{4.Sec:MainResult}
 Fix $d\in\N$. We will first show that the conclusion of Theorem \hyperlink{0.thm:FailureOfPolynomialKhintchine}{A} holds for a non-ergodic system  $(Y,\mathcal B,\nu, (R^\xi)_{\xi\in\FF})$, a collection of sets $B_\delta$ with $\nu(B_\delta)=\delta$, $\delta\in (0,\frac{1}{2}]$, and the polynomial 
 $$
q_{d,\epsilon}:=q_d\circ \Phi_N,
 $$
 where $N\in\N$ is a natural number depending on each  $\epsilon\in (0,1/2)$ yet to be defined  and $q_d,\Phi_N$
 are as  in Section \ref{2.Sec}.
 Then we will explain how these objects can be employed to prove Theorem \hyperlink{0.thm:FailureOfPolynomialKhintchine}{A} in the correct generality.
\subsection{ Proof of a non-ergodic version of Theorem A}
Let $\lambda$ denote the Lebesgue measure on $[0,1)$ and let $\mathbb P$ denote the product probability measure on $\Omega:=\{0,...,d\}^\N$ induced by the normalized counting measure   on $\{0,...,d\}$. Set $Y=[0,1)\times\Omega$, 
let $\mathcal B$ denote the product $\sigma$-algebra on $Y$, let $\nu:=\lambda\otimes\mathbb P$, and for each $\delta\in (0,\frac{1}{2}]$, let $B_\delta:=[0,\delta)\times\Omega$. Clearly, $\nu(B_\delta)=\delta$ for each $\delta\in (0,\frac{1}{2}]$.\\
Identify $\FF$ with $\mathcal P_d$. We will let   $(R^\xi)_{\xi\in \FF}$ be the $\FF$-action generated by the family of $\nu$-preserving and commuting involutions
$$
R^{x_{i_1}^{(1)}\cdots x_{i_d}^{(d)}}:[0,1)\times\Omega\rightarrow [0,1)\times \Omega,(i_1,...,i_d)\in \N^d, 
$$
defined  as follows: If   there is a $t\in\{1,...,d\}$ with $i_1<\cdots<i_t=i_{t+1}=\cdots=i_d$, we set 
\begin{equation}\label{4.defnInfolutionRCase1}
R^{x_{i_1}^{(1)}\cdots x_{i_d}^{(d)}}(x,(y_n)_{n\in\N}):=(x+\frac{1}{2}\sum_{\substack{\alpha\subseteq\{1,...,d\},\\|\alpha|=t}}\sum_{\substack{\sigma:\alpha\rightarrow\{i_1,...,i_t\}\\\text{ is a bijection}}}\left(\prod_{j\in\alpha}\mathbbm 1_{\{j\}}(y_{\sigma(j)})\right)\mod 1,(y_n)_{n\in\N}). 
\end{equation}
In all other cases, we set  
\begin{equation}\label{4.defnInfolutionRCase2}
R^{x_{i_1}^{(1)}\cdots x_{i_d}^{(d)}}(x,(y_n)_{n\in\N})=(x,(y_n)_{n\in\N}).
\end{equation}
(Notice that because every map of the form $R^{x_{i_1}^{(1)}\cdots x_{i_d}^{(d)}}$ only affects the first coordinate by adding either $0$ or $1/2$ and leaves the second coordinate fixed, these maps are commuting $\nu$-preserving involutions.) \\ 
Pick $\epsilon\in (0,1/2)$ and set $N:=N_{\epsilon}$, where $N_\epsilon$ is as in \Cref{3.lem:EstimateLemma}, and let $\Phi_N:\FF\rightarrow \FF$ be as defined in Section \ref{2.Sec} (i.e. it is given by $\Phi_N(\xi)=\langle \alpha_\xi\rangle_N$). In other words, after identifying $\FF$ with $\mathcal F_\emptyset$, 
$$
\Phi_{N}(\xi)=\bigcup_{j\in\alpha_\xi}\{N(j-1)+1,...,Nj\}.
$$
Without loss of generality, assume $N>d$. Clearly, $\Phi_{N}$ is an injective homomorphism. Take now $\delta\in (0,1/2]$ and $\xi\in \FF$, $\xi\neq 0_\FF$. Set $\gamma=\langle \alpha_\xi\rangle_{N}$ and let $q_d$ be as in \eqref{2.eq:Defnq_d}. Note that 
\begingroup
\allowdisplaybreaks
\begin{multline}
    \nu(B_\delta\cap R^{-q_d(\Phi_N(\xi))} B_\delta)=(\lambda\otimes\mathbb P)(B_\delta\cap R^{-\prod_{t=1}^d(\sum_{i\in\gamma} x_i^{(t)})}B_\delta)
    =\int_{[0,1)}\int_\Omega \mathbbm 1_{[0,\delta)}(x)\mathbbm 1_{[0,\delta)}\\
    \left(x+\frac{1}{2}\sum_{t=1}^d
    \sum_{\beta\subseteq \gamma,\,|\beta|=t}
    \sum_{\substack{\alpha\subseteq\{1,...,d\},\\|\alpha|=t}}\sum_{\substack{\sigma:\alpha\rightarrow\beta\\\text{ is a bijection}}}\prod_{j\in\alpha}\mathbbm 1_{\{j\}}(\omega_{\sigma(j)})\mod 1\right)\text{d}\mathbb P(\omega)\text{d}\lambda(x)\\
    =\int_{[0,1)}\int_\Omega \mathbbm 1_{[0,\delta)}(x)\mathbbm 1_{[0,\delta)}(x+\frac{1}{2}\sum_{t=1}^d\sum_{1\leq i_1<\cdots<i_t\leq d}\prod_{s=1}^t\Sigma_{i_s}^{(|\alpha_\xi|N)}(\omega)\mod 1)\text{d}\mathbb P(\omega)\text{d}\lambda(x)\\
    =\int_{[0,1)} \mathbbm 1_{[0,\delta)}(x)\mathbb P\left(\sum_{t=1}^d\sum_{1\leq i_1<\cdots<i_t\leq d}\prod_{s=1}^t\Sigma_{i_s}^{(|\alpha_\xi|N)}\equiv0\mod 2\right)\text{d}\lambda(x).
\end{multline}
\endgroup
So, by Lemma \ref{3.lem:MainEstimate}, we obtain
\begingroup
\allowdisplaybreaks
\begin{multline}\label{4.eq:mainEstimate}
|\nu(B_\delta\cap R^{-q_d(\Phi_N(\xi))}B_\delta)-\frac{\delta}{2^d}|\\
=|\int_{[0,1)} \mathbbm 1_{[0,\delta)}(x)\mathbb P\left(\sum_{t=1}^d\sum_{1\leq i_1<\cdots<i_t\leq d}\prod_{s=1}^t\Sigma_{i_s}^{(|\alpha_\xi|N)}\equiv0\mod 2\right)\text{d}\lambda(x)-\frac{\delta}{2^d}|\\
=\delta|\mathbb P\left(\sum_{t=1}^d\sum_{1\leq i_1<\cdots<i_t\leq d}\prod_{s=1}^t\Sigma_{i_s}^{(|\alpha_\xi|N)}\equiv0\mod 2\right)-\frac{1}{2^d}|
<\delta\cdot\epsilon.
\end{multline}
\endgroup
\subsection{Proof of Theorem A in full generality}
We are now in position to construct the weakly mixing system $(X,\mathcal A,\mu, (T^\xi)_{\xi\in\FF})$  and the family of sets $A_\delta$, $\delta\in (0,1/2]$, with $\mu(A_\delta)=\delta$ for which \eqref{0.eq:SmallIntersections} holds.\\
We will let $X=\prod_{\Gamma\in\FF}Y$, $\mathcal A$ denote the product $\sigma$-algebra on $X$, and set $\mu$ to be the product measure induced by $\nu$ on $X$. For each $\xi\in\FF$, define the projection $\pi_\xi:X\rightarrow Y$ by $\pi_\xi(\omega)=\omega(\xi)$. For each  $\delta\in (0,1/2]$, we set $A_\delta=\pi^{-1}_{0_\FF}(B_\delta)$.\\
Identify $\FF$ with $\FF\times\FF$. The $\mu$-preserving action $(T^{(\xi,\xi')})_{(\xi,\xi')\in\FF\times\FF}$ is the  $\FF\times\FF$-action generated by the $\mu$-preserving and commuting involutions of the form 
$$
T^{(\xi,0_\FF)}:X\rightarrow X\text{ and }T^{(0_\FF,\xi')}:X\rightarrow X,\,\xi,\xi'\in\FF,
$$
defined by 
$$
[T^{(\xi,0_\FF)}\omega](\eta)=R^\xi(\omega(\eta))
$$
and
$$
[T^{(0_\FF,\xi')}\omega](\eta)=\omega(\eta+\xi').
$$
(Note that we indeed have that for any $\xi,\xi'\in \FF$, $T^{(\xi,0_\FF)}$ and $T^{(0_\FF,\xi')}$ are involutions and that  $T^{(\xi,0_\FF)}T^{(0_\FF,\xi')}=T^{(0_\FF,\xi')}T^{(\xi,0_\FF)}$).\\
To check that $(T^{(\xi,\xi')})_{(\xi,\xi')\in\FF\times \FF}$ is weakly mixing, let $\xi'_k\in \FF$, $k\in\N$, be such that $\alpha_{\xi'_k}=\{k\}$. It follows that for any $A,B\in\mathcal A$, 
$$
\lim_{k\rightarrow\infty}\mu(A\cap T^{(0_\FF,\xi'_k)}B)=\mu(A)\mu(B).
$$
Recall that the polynomial $p_d:\FF\rightarrow \FF$ 
(where we view the codomain of $p_d$ as $\FF\times \FF$)  is given by the rule $p_d(\xi)=(q_d(\xi),0_{\FF})$. Thus, for any $\epsilon\in (0,1/2)$ and $N$ picked as before, we have that for any $\delta\in (0,1/2]$ and any $\xi\in\FF$,
$$
\mu(A_\delta \cap T^{-p_d(\Phi_{N}(\xi))}A_\delta)=\mu(\pi^{-1}_{0_\FF}(B_\delta\cap R^{-q_d(\Phi_{N}(\xi))}B_\delta))=\nu(B_\delta\cap R^{-q_d(\Phi_{N}(\xi))}B_\delta),
$$
and, so,  
$$\{\xi\in \mathbb F_2^\omega\,|\,|\mu(A_\delta\cap T^{-(p_d\circ\Phi_{N})(\xi)}A_\delta)-\frac{\delta}{2^{d}}|\geq \epsilon\cdot\delta\}=\{0_\FF\}.$$
That  $p_d\circ \Phi_N$ is a polynomial  of degree $d$ follows from  Lemma \ref{2.p_dIsVIPofDegreeD}. 
\qedsymbol 
\subsection{One further consequence of Theorem A}
A closer look at the proof of Theorem \hyperlink{0.thm:FailureOfPolynomialKhintchine}{A} yields the following result.
\begin{cor}
    Fix $d\in\N$ and let $p_d$, $(X,\mathcal A,\mu,(T^\xi)_{\xi\in\FF})$, and $A_\delta$, $\delta \in (0,1/2]$, be as in the statement of Theorem \hyperlink{0.thm:FailureOfPolynomialKhintchine}{A}. There is a degree $d$ polynomial $p:\FF\rightarrow \FF$ vanishing at zero such that for every $\delta\in (0,1/2]$, the  only accumulation point of the set
    $$
\{\mu(A_\delta\cap T^{-p(\xi)}A_\delta)\,|\,\xi\in\FF\}
    $$
 is $\delta/2^d$. In other words, for every $\epsilon>0$, the set 
    \begin{equation}\label{4.eq:FiniteSet}
\{\xi\in\FF\,|\,|\mu(A_\delta\cap T^{-p(\xi)}A_\delta)-\frac{\delta}{2^d}|\geq \epsilon\}
    \end{equation}
    is finite. (As we will see, one can take $p=p_d\circ \varphi$, where $\varphi$ is an injective homomorphism.) 
\end{cor}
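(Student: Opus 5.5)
The plan is to exploit the fact, visible in the proof of Theorem A, that the intersection $\mu(A_\delta\cap T^{-p_d(\Phi_N(\xi))}A_\delta)$ depends on $\xi$ only through $|\alpha_\xi|$. Indeed, the computation in Section 4.1 gives
$$
\mu(A_\delta\cap T^{-p_d(\Phi_N(\xi))}A_\delta)=\delta\cdot\mathbb P\Big(\sum_{t=1}^d\sum_{1\leq i_1<\cdots<i_t\leq d}\prod_{s=1}^t\Sigma_{i_s}^{(|\alpha_\xi|N)}\equiv0\mod 2\Big),
$$
and the same computation works verbatim with $N=1$, i.e. with $\Phi_1=\mathrm{id}$. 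The quantity depends on $|\alpha_\xi|$ and not on $\xi$ itself. So I would take $\varphi=\Phi_N$ for a fixed $N$, say $N=\max(d+1,1)$ or simply $N=1$ (so $p=p_d$ itself). Lemma \ref{2.p_dIsVIPofDegreeD} gives that $p=p_d\circ\Phi_N$ is a polynomial of degree $d$, and it vanishes at zero since $q_d(0_\FF)=0$.

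The key step is the limit statement. By Lemma \ref{3.lem:MainEstimate}, equivalently by the Perron--Frobenius limit $M^n\to\frac{1}{2^d}J$ established in the proof of Lemma \ref{3.lem:EstimateLemma}, the probability above tends to $1/2^d$ as $|\alpha_\xi|N\to\infty$. Here it is cleaner to use the limit directly rather than the $\epsilon$-dependent $N_\epsilon$. Writing $m=|\alpha_\xi|N$, Lemma \ref{3.lem:EstimateLemma} gives an $N_{\epsilon'}$ such that for all $m\geq N_{\epsilon'}$ every parity pattern has probability within $\epsilon'$ of $2^{-d}$. As in the proof of Lemma \ref{3.lem:MainEstimate}, the event in question is exactly the single pattern "all $\Sigma_j^{(m)}$ even", so
$$
\big|\mu(A_\delta\cap T^{-p(\xi)}A_\delta)-\delta/2^d\big|<\delta\epsilon'\leq\epsilon'
$$
whenever $|\alpha_\xi|N\geq N_{\epsilon'}$. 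Lemma \ref{3.lem:MainEstimate} is stated only for multiples $kN_\epsilon$, so I would restate its proof with an arbitrary $m\geq N_\epsilon$; this works because Lemma \ref{3.lem:EstimateLemma} holds for all $N\geq N_\epsilon$.

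To conclude, given $\epsilon>0$, the set in \eqref{4.eq:FiniteSet} is contained in $\{\xi:|\alpha_\xi|<N_\epsilon/N\}$. This set is \emph{not} finite in $\FF$, since there are infinitely many singletons. That is the main obstacle: dependence on $|\alpha_\xi|$ alone gives cofiniteness only in the size parameter. To get genuine finiteness, I would choose $\varphi$ so that $|\alpha_{\varphi(\xi)}|\to\infty$ along the cofinite filter of $\FF$. Concretely, let $\varphi$ be the injective homomorphism with $\alpha_{\varphi(e_k)}$ a block of $kN$ consecutive integers disjoint from the other blocks, where $e_k$ is the $k$-th basis vector. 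Then $|\alpha_{\varphi(\xi)}|\geq N\max\alpha_\xi$, and only finitely many $\xi$ have $\max\alpha_\xi$ below any bound.

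It remains to check that the Section 4.1 computation applies to such $\varphi(\xi)$. It does, because that computation used only that $\alpha_{\varphi(\xi)}=\gamma$ is a finite set, with $R$ acting through the $\Sigma_j^{(|\gamma|)}$ up to relabeling of the i.i.d. coordinates. I also need the degree of $p_d\circ\varphi$ to be exactly $d$. I would verify this as in Lemma \ref{2.p_dIsVIPofDegreeD}: the upper bound comes from $\deg q_d\leq d$, and the lower bound comes from the monomial-counting argument, which carries over once each block has at least $d$ elements.

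Finally, since the values along the finitely many exceptional $\xi$ form a finite set and all other values lie in arbitrarily small neighborhoods of $\delta/2^d$, the only accumulation point is $\delta/2^d$. For $\delta\in(0,1/2]$, infinitely many values must approach it.
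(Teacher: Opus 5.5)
Your proposal is correct and is essentially the paper's argument. The correlation $\mu(A_\delta\cap T^{-p_d(\eta)}A_\delta)$ depends only on $|\alpha_\eta|$ and tends to $\delta/2^d$, so one composes $p_d$ with an injective homomorphism sending singletons to disjoint blocks of growing size, which forces $|\alpha_{\varphi(\xi)}|\to\infty$ as $\max\alpha_\xi\to\infty$. The only difference is cosmetic: the paper's $\varphi$ is the identity on subsets of $\{1,\dots,d+1\}$, so $p_d\circ\varphi$ inherits degree exactly $d$ directly from $p_d$. You instead re-run the monomial count of Lemma \ref{2.p_dIsVIPofDegreeD}; that count works for any disjoint non-empty blocks, so your requirement of at least $d$ elements per block is not needed.
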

\begin{proof}
    Define the injective homomorphism $\varphi:\FF\rightarrow\FF$ by 
    $$
\varphi(\alpha)=\alpha\cap\{1,...,d\}\cup\bigcup_{j\in\alpha,\,j>d}\{d+1+(j-d-1)^2,...,d+(j-d)^2\}.
    $$
    Set $p=p_d\circ \varphi$. Since $(p_d\circ \varphi)(\alpha)=p_d(\alpha)$ for each $\alpha\subseteq\{1,...,d+1\}$, it follows that $p_d\circ \varphi$ is a polynomial of degree $d$. That \eqref{4.eq:FiniteSet} holds for every $\epsilon>0$ follows from the fact that $\lim_{\max \alpha\rightarrow\infty}|\varphi(\alpha)|=\infty$.
\end{proof}

\section{Theorem A is "sharp".}\label{5.Sec}
In this Section we prove the following extension of  Corollary \hyperlink{0.prop:CorrectRateOfGrowth}{C}.
For each $d\in\N$, we let $\alpha_d$ be the largest positive number with the property that for every invertible probability preserving system $(X,\mathcal A,\mu, (T^\xi)_{\xi\in\FF})$, every set $A\in\mathcal A$ with $\mu(A)=\frac{1}{2}$, and every VIP-system  $v:\FF\rightarrow \FF$ of degree $d$, one has
$$
\sup_{\xi\in\FF,\,\xi\neq0_\FF}\mu(A\cap T^{-v(\xi)}A)\geq \alpha_d.
$$
\begin{cor}
    For every $d\in\N$, 
    \begin{equation}\label{6.eq:SharpnessPoly}
    \frac{1}{2^{d+1}}\left(\frac{1}{2-1/2^d}\right)\leq \alpha_d\leq \frac{1}{2^{d+1}}.
    \end{equation}
    Thus, $(\alpha_d)_{d\in\N}$ is $O(\frac{1}{2^{d+1}})$.
\end{cor}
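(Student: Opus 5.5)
The plan has two halves: an upper bound obtained from Theorem A, and a lower bound obtained from a Fourier/positive-definiteness argument.

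\emph{Upper bound.} Apply Theorem \hyperlink{0.thm:FailureOfPolynomialKhintchine}{A} with $\delta=1/2$. For every $\epsilon\in(0,1/2)$ the map $v=p_{d,\epsilon}$ restricted to $\FF\setminus\{0_\FF\}$ is a VIP-system of degree $d$; this follows from Remark \ref{2.rem:VipDegOFp_d}, since $p_{d,\epsilon}=p_d\circ\Phi_N$. Moreover, for every nonzero $\xi$ we have $\mu(A_{1/2}\cap T^{-v(\xi)}A_{1/2})<\frac{1}{2^{d+1}}+\frac{\epsilon}{2}$. Letting $\epsilon\to0$ gives $\alpha_d\le 2^{-(d+1)}$. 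Nothing more is needed here.

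\emph{Lower bound.} Take $A$ with $\mu(A)=1/2$ and put $f=2\mathbbm 1_A-1$. Then $\|f\|_2=1$ and
$$\mu(A\cap T^{-g}A)=\tfrac14\bigl(1+\langle f,T^gf\rangle\bigr).$$
So it suffices to find a nonzero $\xi$ with
$$\langle f,T^{v(\xi)}f\rangle\ \ge\ \frac{2}{2^{d}(2-2^{-d})}-1-\eta$$
for an arbitrarily small $\eta>0$. Writing $c:=\sup_{\xi\ne0}\langle f,T^{v(\xi)}f\rangle$, I argue by contradiction and assume $c$ is below this threshold.

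The function $\rho(g)=\langle f,T^gf\rangle$ is positive definite on $\FF$. By Bochner's theorem, $\rho(g)=\int_{\widehat{\FF}}\chi(g)\,d\sigma(\chi)$ for a probability measure $\sigma$ on the dual group $\widehat{\FF}=\{0,1\}^{\N}$, and every character takes values $\pm1$.

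Now fix a finite ``cube'' of inputs. Choose $\xi_1,\dots,\xi_n$ with disjoint supports, to be taken with $n\to\infty$, and consider the vectors $v(\xi_\beta)$, where $\xi_\beta=\sum_{j\in\beta}\xi_j$ and $\beta\subseteq\{1,\dots,n\}$. Since $v$ has degree at most $d$, the map $\beta\mapsto\chi(v(\xi_\beta))$ is, for each fixed character $\chi$, $(-1)^{P_\chi(\beta)}$ with $P_\chi$ an $\mathbb F_2$-polynomial of degree at most $d$ in the indicator variables of $\beta$, and $P_\chi(\emptyset)=0$. The key counting fact is that a nonzero $\mathbb F_2$-polynomial of degree at most $d$ on $\mathbb F_2^n$ is nonzero on at least a $2^{-d}$ fraction of points; this is the Schwartz--Zippel/Reed--Muller minimum distance bound. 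Consequently, for each $\chi$,
$$\frac{1}{2^n}\sum_{\beta}\chi\bigl(v(\xi_\beta)\bigr)\ \ge\ -1+2\cdot 2^{-d}\quad\text{(using $P_\chi(\emptyset)=0$ to exclude the all-ones case).}$$

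Averaging this over $\beta$ against $\sigma$ gives
$$\frac{1}{2^n}\Bigl(1+\sum_{\beta\ne\emptyset}\rho\bigl(v(\xi_\beta)\bigr)\Bigr)\ \ge\ -1+2^{1-d}.$$
This yields $c\ge -1+2^{1-d}-O(2^{-n})$. That already gives $\mu\ge 2^{-(d+1)}$ asymptotically, except at $d=1$, where the estimate must be handled with care. This is stronger than the claimed bound; if the counting step fails, it is at least a valid bound. The claimed factor $1/(2-2^{-d})$ suggests the authors weight the empty set differently, or use the bound $\ge 2^{-d}$ on the \emph{number} of zeros instead, which gives $c\ge(2^{1-d}-1)/(\dots)$. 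I would recompute precisely which Reed--Muller estimate is valid for $P_\chi$ vanishing at $0$ and track the constant accordingly.

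\emph{Main obstacle.} The delicate step is the Reed--Muller reduction. First, I must verify that $\beta\mapsto\chi(v(\xi_\beta))$ is genuinely a degree-$\le d$ Boolean polynomial; this follows from the VIP identity \eqref{0.eq:DefnVIP} because $\chi$ is $\{\pm1\}$-valued and multiplicative. Second, the constant must be fixed carefully. In particular, $P_\chi$ could be identically zero, which gives average $1$ and is harmless. If the sharp constant does not come out directly, I would fall back to proving the weaker bound with constant $1/(2-2^{-d})$, which is what the statement requires.
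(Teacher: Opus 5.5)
Your argument is correct, and it is a refinement of the paper's proof rather than a different strategy. The paper uses the same Bochner setup, but only on a cube of dimension $d+1$. For $\xi_0,\dots,\xi_d$ with disjoint supports, the degree-$d$ VIP identity forces the $2^{d+1}-1$ values $\chi(v(\xi_\beta))$, $\beta\neq\emptyset$, to have product $1$, so at least one of them equals $+1$. Pigeonholing over these $\beta$ then gives
$\mu(A\cap T^{-v(\xi_\beta)}A)=\tfrac12\sigma(\{\chi:\chi(v(\xi_\beta))=1\})\ge \frac{1}{2(2^{d+1}-1)}$, which is exactly $\frac{1}{2^{d+1}}\cdot\frac{1}{2-2^{-d}}$.

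That parity fact is precisely the case $n=d+1$ of your Reed--Muller bound: a polynomial of degree $\le d$ in $d+1$ variables vanishing at $0$ has a second zero. If you run your averaging with $n=d+1$, you recover the paper's constant exactly. This is where the factor $1/(2-2^{-d})$ comes from: the empty set carries weight $2^{-(d+1)}$, which is not negligible.

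Your hedging is unnecessary. Applying the minimum distance $2^{n-d}$ of $\mathrm{RM}(d,n)$ to the nonzero function $1+P_\chi$ correctly gives $\frac{1}{2^n}\sum_\beta\chi(v(\xi_\beta))\ge 2^{1-d}-1$ for every $\chi$. Nothing special happens at $d=1$. Letting $n\to\infty$ yields $\sup_{\xi\neq0}\mu(A\cap T^{-v(\xi)}A)\ge 2^{-(d+1)}$. Combined with the upper bound from Theorem A, this gives $\alpha_d=2^{-(d+1)}$ exactly. So your route buys the sharp constant and shows Theorem A is sharp on the nose. The paper's route is shorter and uses only the single identity \eqref{0.eq:DefnVIP}.

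The one step you should actually write out is that $w(\beta):=\chi'(v(\xi_\beta))$, where $\chi=(-1)^{\chi'}$, has degree $\le d$ as a multilinear $\mathbb F_2$-polynomial. Its coefficients are $c_S=\sum_{T\subseteq S}w(T)$. The VIP identity with $d+1$ blocks, applied to singleton blocks, only kills $c_S$ for $|S|=d+1$. For $|S|>d+1$ you need the identity with more blocks, since vanishing of the top-order coefficients does not propagate upward on its own (think of $x_1x_2x_3$ when $d=1$).

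This follows by induction. Over $\mathbb F_2$, summing the degree-$t$ identity for the three tuples $(\gamma_0,\gamma_1,\dots,\gamma_t)$, $(\gamma_{t+1},\gamma_1,\dots,\gamma_t)$ and $(\gamma_0\cup\gamma_{t+1},\gamma_1,\dots,\gamma_t)$ gives the identity for $(\gamma_0,\dots,\gamma_{t+1})$. All sets involved are unions of the $\alpha_{\xi_j}$, so the argument stays inside your cube. Finally, the proof by contradiction is superfluous: the averaging argument gives the bound directly.
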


\begin{proof}
    Fix $d\in\N$. By Theorem \hyperlink{0.thm:FailureOfPolynomialKhintchine}{A} (and Remark \ref{2.rem:VipDegOFp_d}), we have that $\alpha_d\leq \frac{1}{2^{d+1}}$. Thus, all that remains to be shown is that $\frac{1}{2^{d+1}}\left( \frac{1}{2-1/2^d} \right)\leq \alpha_d$.\\
    To do this, let $\Gamma$ denote the Pontryagin dual of $\FF$, let $(X,\mathcal A,\mu,(T^\xi)_{\xi\in\FF})$ be  a probability preserving system, let $A\in\mathcal A$ be such that $\mu(A)=\frac{1}{2}$, and let $v:\FF\rightarrow \FF$ be a VIP-system of degree $d$. Let 
    $$f_A:=\frac{\mathbbm 1_A-\mu(A)}{\sqrt{\mu(A)}\sqrt{1-\mu(A)}}=2(\mathbbm 1_A-\mu(A)).$$
Note that $\| f_A\|_{L^2(\mu)}=1$. Thus, by Bochner's theorem, there is a unique Borel probability measure $\sigma$ on $\Gamma$ with the property that for each $\xi\in\FF$,
$$\langle T^\xi f_A,f_A\rangle =\int_\Gamma \chi(\xi)\text{d}\sigma(\chi).$$
Consider now any $\xi_0,...,\xi_d\in\FF\setminus\{0_\FF\}$  with the property that the sets $\alpha_{\xi_i}$, $i\in\{0,...,d\}$, are pairwise disjoint.
By the definition of VIP-systems of degree $d$, we have 
\begin{equation}\label{5.VIPIdentity}
0_\FF=\sum_{t=1}^{d+1}\left((-1)^t\sum_{0\leq i_1<\cdots<i_t\leq d}v(\sum_{j=1}^t \xi_{i_j})\right)=\sum_{t=1}^{d+1}\sum_{0\leq i_1<\cdots<i_t\leq d}v(\sum_{j=1}^t \xi_{i_j}).
\end{equation}
Let $\chi\in\Gamma$ and note that for every $\xi\in\FF$, $\chi(\xi)\in\{-1,1\}$. Since  formula \eqref{5.VIPIdentity} implies that 
$$
1=\chi(0_\FF)=\prod_{t=1}^{d+1}\prod_{0\leq i_1<\cdots<i_t\leq d}\chi\left(v(\sum_{j=1}^t \xi_{i_j})\right),
$$
and \eqref{5.VIPIdentity} has an odd number of summands, we obtain that 
$$\chi(v(\sum_{j\in\alpha}\xi_{j}))=1$$
for some non-empty $\alpha\subseteq \{0,...,d\}$. Thus, we can find a non-empty $\alpha\subseteq \{0,...,d\}$ for which 
$$\sigma(\{\chi\in\Gamma\,|\,\chi\left(v(\sum_{j\in\alpha}\xi_j)\right)=1\})\geq \frac{1}{2^{d+1}-1}.
$$
It follows that 
\begingroup
\allowdisplaybreaks
\begin{multline*}
\frac{1}{2^{d+1}}\frac{1}{2-1/2^{d}}=\frac{1}{4}\left(1-\frac{2^{d+1}-3}{2^{d+1}-1} \right)=\frac{1}{4}-\frac{1}{4}\left(\frac{2^{d+1}-2}{2^{d+1}-1}-\frac{1}{2^{d+1}-1} \right)\\
\leq \mu^2(A)-\frac{1}{4}\left(\sigma(\{\chi\in\Gamma\,|\,\chi(v\left(\sum_{j\in\alpha}\xi_j\right))=-1\})- \sigma(\{\chi\in\Gamma\,|\,\chi(v\left(\sum_{j\in\alpha}\xi_j\right))=1\}) \right)\\
=\mu^2(A)+\frac{1}{4}\int_\Gamma \chi\left(v(\sum_{j\in\alpha}\xi_j)\right)\text{d}\sigma(\chi)=\mu(A\cap T^{-v(\sum_{j\in\alpha}\xi_j)}A).
\end{multline*}
\endgroup
We are done. 
\end{proof}
\section{ The proof of Corollary D}\label{6.Sec}
Our proof of Corollary \hyperlink{0.cor:CombinatorialApplication2}{D} makes use of the 
following inverse form  of Furstenberg's correspondence principle.
    \begin{thm}[Theorem 9.1 in \cite{BerZel-NiceRecurrence}]\label{6.Lem:epsilonInverse} 
    Let $(G,+)$ be a countable abelian group, let $(X,\mathcal A,\mu)$  be a standard Lebesgue space with no atoms (i.e. it is measure-theoretically isomorphic to $[0,1]$ equipped with the Lebesgue measure), and let $(S^g)_{g\in G}$ denote a $\mu$-preserving $G$-action. Suppose that $(S^g)_{g\in G}$ is ergodic. For any $A\in\mathcal A$ with $\mu(A)>0$  and any $\delta>0$, there is a set $E\subseteq G$ such that for any $k\in\N$,  $g_1,...,g_k\in G$, there is a non-negative real number $r_{E,g_1,...,g_k}$ with the properties that (a) for any F{\o}lner sequence $(\Phi_N)_{N\in\N}$ in $G$,
    $$r_{E,g_1,...,g_k}=\lim_{N\rightarrow\infty}\frac{|\bigcap_{j=1}^k(E-g_j)\cap \Phi_N|}{|\Phi_N|}$$
    and (b) $\left|r_{E,g_1,...,g_k}-\mu(\bigcap_{j=1}^k S^{-g_j}A)\right|<k\delta$.
\end{thm}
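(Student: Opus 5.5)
The plan is to build $E$ as the set of return times of a single point to a set that is ``nice'' in a uniquely ergodic topological model of $(X,\mathcal A,\mu,(S^g)_{g\in G})$. Uniform convergence of ergodic averages over \emph{all} F{\o}lner sequences is exactly what unique ergodicity provides, and it is needed for (a).

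\textbf{Step 1 (topological model).} The system is ergodic on a non-atomic standard Lebesgue space and $G$ is countable abelian, hence amenable. I would invoke the Jewett--Krieger theorem for amenable group actions (Weiss's and Rosenthal's extensions). It gives a compact metric space $Z$ with a continuous, minimal, uniquely ergodic $G$-action $(\tilde S^g)_{g\in G}$ and unique invariant measure $m$, such that $(Z,m,\tilde S)$ is measure-theoretically isomorphic to $(X,\mu,S)$. From now on I work in $Z$ and let $\tilde A$ denote the image of $A$.

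\textbf{Step 2 (approximating $A$ by a set with null boundary).} Next I choose $A'\subseteq Z$ with $m(\partial A')=0$ and $m(\tilde A\triangle A')<\delta$. By regularity there is a compact $K\subseteq \tilde A$ and an open $U\supseteq \tilde A$ with $m(U\setminus K)<\delta$. For all but countably many $r>0$, the set $A'=\{z:\operatorname{dist}(z,K)<r\}$ has null boundary, since the level sets $\{\operatorname{dist}(\cdot,K)=r\}$ are disjoint. For small $r$ such a set also lies inside $U$.

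Since each $\tilde S^{g}$ is a homeomorphism preserving $m$, every set $B=\bigcap_{j=1}^k \tilde S^{-g_j}A'$ also satisfies $m(\partial B)=0$, because $\partial B\subseteq\bigcup_j \tilde S^{-g_j}\partial A'$. Moreover,
$$\Bigl|m(B)-\mu\bigl(\textstyle\bigcap_{j} S^{-g_j}A\bigr)\Bigr|\le\sum_{j=1}^k m\bigl(\tilde S^{-g_j}(A'\triangle\tilde A)\bigr)<k\delta .$$
Setting $r_{E,g_1,\dots,g_k}:=m(B)$, this inequality gives property (b).

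\textbf{Step 3 (defining $E$ and verifying (a)).} Fix any $z_0\in Z$ and let $E=\{g\in G:\tilde S^g z_0\in A'\}$. Then $g\in\bigcap_j(E-g_j)$ if and only if $\tilde S^{g}z_0\in B$, so the density in (a) equals $\frac1{|\Phi_N|}\sum_{g\in\Phi_N}\mathbbm 1_B(\tilde S^g z_0)$.

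Unique ergodicity implies that for every F{\o}lner sequence $(\Phi_N)$ the empirical measures $\frac1{|\Phi_N|}\sum_{g\in\Phi_N}\delta_{\tilde S^g z_0}$ converge weak$^*$ to $m$. Indeed, every weak$^*$ limit point is $\tilde S$-invariant by the F{\o}lner property, so it must equal $m$. By the portmanteau theorem, since $m(\partial B)=0$, the averages of $\mathbbm 1_B$ converge to $m(B)$. This holds for every F{\o}lner sequence and every starting point $z_0$, which proves (a).

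\textbf{Main obstacle.} The main obstacle is Step 1: one needs a strictly ergodic model for a general countable abelian (amenable) group action, which is a deep theorem that I would cite rather than reprove. The remaining work is the routine null-boundary approximation and the portmanteau argument. If one prefers to avoid Jewett--Krieger, a fallback is to use a finite partition refining $\{A,A^c\}$ up to $\delta$ and construct a uniquely ergodic symbolic coding by a Rokhlin-tower and quasi-tiling argument (Ornstein--Weiss). That route, however, essentially reproves the model theorem.
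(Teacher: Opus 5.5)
The paper does not prove this statement. It quotes it from \cite{BerZel-NiceRecurrence} and uses it as a black box in the proof of Corollary D, so there is no in-paper argument to compare your route with.

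On its own terms, your argument is correct:
\begin{itemize}
\item The approximation $K\subseteq\tilde A\subseteq U$, $A'=\{\mathrm{dist}(\cdot,K)<r\}$ yields a set with null boundary.
\item The inclusion $\partial B\subseteq\bigcup_j\tilde S^{-g_j}\partial A'$, together with invariance of $m$, gives $m(\partial B)=0$.
\item The estimate $|m(B)-\mu(\bigcap_j S^{-g_j}A)|\le k\,m(A'\triangle\tilde A)<k\delta$ gives (b) with the required strict inequality.
\item The identity $\bigcap_j(E-g_j)=\{g:\tilde S^gz_0\in B\}$ is right.
\item In a uniquely ergodic continuous $G$-action every orbit equidistributes along every F{\o}lner sequence, since weak$^*$ limit points are invariant because $|\Phi_N\triangle(\Phi_N+h)|=o(|\Phi_N|)$. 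The portmanteau theorem then gives (a) with $r_{E,g_1,\dots,g_k}=m(B)$, independent of the F{\o}lner sequence.
\end{itemize}
The form $k\delta$ of the error in (b) is exactly what one approximation $A\approx A'$, carried through $k$ translates, produces.

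The one thing to tighten is the citation in Step 1. The amenable-group versions of the Jewett--Krieger theorem (Weiss, Rosenthal) are formulated for essentially free ergodic actions. The statement only assumes ergodicity, so you need a sentence explaining why freeness is harmless. For abelian $G$ this is a short remark:
\begin{itemize}
\item Since $\mathrm{Stab}(S^hx)=\mathrm{Stab}(x)$, ergodicity forces $\mathrm{Stab}(x)=H$ for a.e.\ $x$ and a fixed subgroup $H$. So the action factors through an essentially free ergodic action of $G/H$.
\item $G/H$ is infinite, because a finite group cannot act ergodically on a non-atomic space. To see this, saturate a partition into sets of measure $<1/|G/H|$: each saturation has measure $<1$, hence $0$.
\item Composing a strictly ergodic model of the $G/H$-action with $G\to G/H$ gives a continuous $G$-action whose $G$-invariant measures are exactly the $G/H$-invariant ones, so it is uniquely ergodic.
\item Your Step 3 only uses $G$-invariance of weak$^*$ limit points, so nothing else changes.
\end{itemize}

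Alternatively, for any countable amenable $G$, pass to the product with a Bernoulli shift $(\{0,1\}^G,\nu,\sigma)$. The product is free, and it is ergodic because Bernoulli shifts are weakly mixing. It also satisfies $(\mu\otimes\nu)\big(\bigcap_j (S\times\sigma)^{-g_j}(A\times\{0,1\}^G)\big)=\mu(\bigcap_j S^{-g_j}A)$. Running your argument on $A\times\{0,1\}^G$ then gives the theorem for $A$.

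Finally, minimality of the model is never used; unique ergodicity is all you need.
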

\begin{proof}[Proof of Corollary \hyperlink{0.cor:CombinatorialApplication2}{D}]
Fix $d\geq 2$, $\epsilon=(1/64)^2$ and let $(X,\mathcal A,\mu, (T^{(\xi,\xi')})_{(\xi,\xi')\in\FF\times\FF})$, $N:=N_\epsilon\in\N$, and $A_{1/2}$ be as in the proof of Theorem \hyperlink{0.thm:FailureOfPolynomialKhintchine}{A}. Identifying $\FF\times\FF$ with $\mathcal P_d\times \FF$, we define for every $i_1,...,i_d\in\N$ and every $\xi'\in\FF$, the $\mu$-preserving involution $S^{(x_{i_1}^{(1)}\cdots x_{i_d}^{(d)},\xi')}$ by
$$
S^{(x_{i_1}^{(1)}\cdots x_{i_d}^{(d)},\xi')}:=T^{(0,\xi')}\prod_{s_1,...,s_d=0}^{N-1}T^{(x_{(Ni_1-s_1)}^{(1)}\cdots x_{(Ni_d-s_d)}^{(d)},0)}.
$$
Since the transformations on the right-hand side commute and are involutions, one obtains that $(S^{(\xi,\xi')})_{(\xi,\xi')\in\mathcal P_d\times\FF}$ is a $\mu$-preserving $\mathcal P_d\times\FF$-action.\\
It follows that $(X,\mathcal A,\mu, (S^{(\xi,\xi')})_{(\xi,\xi')\in\mathcal P_d\times\FF})$ is a weakly mixing, invertible probability preserving system and that for any $\xi\in\FF$,
$$
\mu(A_{1/2}\cap T^{-p_d(\Phi_N(\xi))}A_{1/2})=\mu(A_{1/2}\cap S^{-p_d(\xi)}A_{1/2}), 
$$
where, as before, we view $p_d$ as a map from $\FF$ to $\mathcal P_d\times\FF$ given by $p_d(\xi)=(q_d(\xi),0_\FF)$. Indeed, the definition of $(S^{(\xi,\xi')})_{(\xi,\xi')\in\mathcal P_d\times\FF}$ was chosen precisely so that $S^{(q_d(\xi),0_\FF)}=T^{(q_d(\Phi_N(\xi)),0_\FF)}$. That 
$(S^{(\xi,\xi')})_{(\xi,\xi')\in\mathcal P_d\times\FF}$ is weakly mixing follows from the equality $T^{(0_{\mathcal P_d},\xi')}=S^{(0_{\mathcal P_d},\xi')}$.\\ 
Thus, by our choice of $N$ and $\epsilon$, we have that for every $\xi\neq 0_\FF$, 
\begin{equation}\label{6.eq:KeyInequality}
\mu(A_{1/2}\cap S^{-p_d(\xi)}A_{1/2})\leq \frac{1}{8}+\epsilon< \frac{1}{8}+\frac{1}{32}=\frac{5}{32}.
\end{equation}
Note that, by construction, $X$ is measure-theoretically isomorphic to a compact metric space, $\mathcal A=\text{Borel}(X)$, and $\mu$ has no atoms. Thus, $(X,\mathcal A,\mu)$ is an atomless, standard Lebesgue space (see \cite[Theorem 2.1]{waltersIntroduction}, for example). It follows that  there is a set $E\subseteq \mathcal P_d\times\FF$ for which properties (a) and (b) in  Theorem \ref{6.Lem:epsilonInverse} 
hold with $A=A_{1/2}$ and $\delta=\min\{1/64,1/2^{d+3}\}$. Identifying $\mathcal P_d\times \FF$ with $\FF$, we can assume without loss of generality that $E\subseteq \FF$.
By (b) and our choice of $\delta$ it follows that 
$$
d^*(E)> \mu(A_{1/2})-\delta\geq \max\{1/2-1/64, 1/2-1/2^{d+3}\}\geq\max\{1/2-1/64, 1/2-1/32\}.
$$
Also, by (b) and \eqref{6.eq:KeyInequality}, for every non-zero $\xi\in\FF$,
$$
d^*(E\cap (E-p_d(\xi)))<\mu(A_{1/2}\cap S^{-p_d(\xi)}A_{1/2})+2\delta\leq \frac{5}{32}+\frac{1}{32}=\frac{6}{32}<\frac{1}{4}-\frac{1}{32}.
$$
Thus, for every non-zero $\xi\in\FF$,
$$
d^*(E\cap (E-p_d(\xi)))<\frac{1}{4}-\frac{1}{32}<\left(\frac{1}{2}-\frac{1}{32}\right)^2\leq (d^*(E))^2-\epsilon.
$$
We are done. 
\end{proof}
It is worth noting that the argument used here to prove Corollary \hyperlink{0.cor:CombinatorialApplication2}{D} reveals an alternative form of  Theorem \hyperlink{0.thm:FailureOfPolynomialKhintchine}{A} in which instead of varying $p_{d,\epsilon}$ depending on $\epsilon$ while keeping the probability preserving system $(X,\mathcal A,\mu,(T^\xi)_{\xi\in\FF})$ fixed, one keeps $p_d$ fixed while varying $(T^\xi)_{\xi\in\FF}$ depending on $\epsilon$. 
\begin{namedthm*}{Theorem A$^\prime$}
Fix $d\in\N$ and let $p_d$, $(X,\mathcal A,\mu)$, and $A_\delta\in\mathcal A$, $\delta\in (0,1/2]$, be as in the proof of Theorem \hyperlink{0.thm:FailureOfPolynomialKhintchine}{A}. For every $\epsilon\in(0,1/2)$, there is an invertible, weakly mixing $\mu$-preserving $\FF$-action $(T^\xi)_{\xi\in\FF}$ with the property   that 
\begin{equation}\label{6.eq:SmallIntersections}
\{\xi\in \mathbb F_2^\omega\,|\,|\mu(A_\delta\cap T^{-p_{d}(\xi)}A_\delta)-\frac{\delta}{2^{d}}|\geq \epsilon\cdot\delta\}=\{0_\FF\}
\end{equation} 
for each $\delta\in (0,1/2]$. 
\end{namedthm*}
\appendix

\counterwithin{thm}{section}
\renewcommand{\thethm}{\Alph{section}\arabic{thm}}
\renewcommand{\thesubsection}{\Alph{section}.\arabic{subsection}}
\renewcommand{\thesection}{\Alph{section}}
\section{Proof of a (highly) restricted case of the density polynomial Hales-Jewett conjecture}\label{A.Sec}
In this appendix we prove the following special case of the density polynomial Hales-Jewett conjecture. We formulate this special case in a way which will facilitate the proof. \\
For any $d,N\in\N$, let $[N]:=\{1,...,N\}$ and let  $F_N^{(d)}:=\{-1,1\}^{[N]^d}$. For any $\Gamma\subseteq [N]^d$, set 
$$F_N^{(d)}(\Gamma):=\{f\in F_N^{(d)}\,|\,\forall x\in [N]^d\setminus \Gamma,\,f(x)=1\}.$$
Note that for any $f,g\in F_N^{(d)}(\Gamma)$, the pointwise product of $f$ and $g$, which we denote by $fg$, also belongs to 
$F_N^{(d)}(\Gamma)$ and, so, $F_N^{(d)}(\Gamma)$ is a subgroup of the multiplicative abelian group $F_N^{(d)}$.  Note that for any $f\in F_N^{(d)}$ there is a unique $A\subseteq [N]^d$ such that 
$$f=1-2\mathbbm 1_A.$$
\begin{thm}[Cf. Theorem 1.3 in \cite{MR5029922}]\label{A.thm:EpsilonPDHJ}
    For every $d\in\N$ there is an $\epsilon>0$  and an $N:=N_d\in\N$ with the property that for any $\mathcal S\subseteq F_N^{(d)}$ with $|\mathcal S|>(\frac{1}{2}-\epsilon)|F_N^{(d)}|$ one can find $f,g\in\mathcal S$ and a non-empty $\alpha\subseteq [N]$ such that 
    $$f|_{\alpha^d}= 1\text{ and }(1-2\mathbbm 1_{\alpha^d})f=g.$$ 
\end{thm}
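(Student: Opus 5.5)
The plan is to reduce the statement to a purely finite graph-theoretic fact: the existence of a single odd cycle in an auxiliary graph supported on a bounded box. I then transfer the resulting local density deficit to all of $F_N^{(d)}$ by fibring over the coordinates outside that box.

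\textbf{Step 1 (reformulation).} For $f\in F_N^{(d)}$ write $f=1-2\mathbbm 1_{A_f}$. The required configuration is a pair $f,g\in\mathcal S$ with $A_g=A_f\sqcup\alpha^d$ for some non-empty $\alpha\subseteq[N]$, the union being disjoint. Let $\mathcal G_N$ be the graph on $F_N^{(d)}$ whose edges join $f$ and $g$ whenever $A_g=A_f\sqcup\alpha^d$ or $A_f=A_g\sqcup\alpha^d$ for some non-empty $\alpha\subseteq[N]$. The theorem asserts that every set of density $>\frac12-\epsilon$ contains an edge of $\mathcal G_N$. Equivalently, the independence ratio of $\mathcal G_N$ is at most $\frac12-\epsilon$.

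\textbf{Step 2 (odd cycle on a bounded box).} I would fix $M=M(d)$ (I expect $M=d+1$ to suffice) and look for an odd cycle
$$B_0\to B_1\to\cdots\to B_{L-1}\to B_0,\qquad B_j\subseteq[M]^d,\ L \text{ odd},$$
in which every step either adds a disjoint cube $\alpha^d$ with $\alpha\subseteq[M]$ or removes a cube $\alpha^d$ contained in the current set. This is the step I expect to be the main obstacle.

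The parity heuristic comes from the proof of Corollary C in Section \ref{5.Sec}. Each $x\in[d+1]^d$ has support $s(x)$ with $|s(x)|\le d$, and $x\in\alpha^d$ if and only if $s(x)\subseteq\alpha$. Hence $x$ lies in exactly $2^{d+1-|s(x)|}$ of the cubes $\alpha^d$, which is even. So the symmetric difference of all $2^{d+1}-1$ cubes $\alpha^d$, $\emptyset\neq\alpha\subseteq[d+1]$, is empty, and toggling all of them gives an \emph{odd} closed walk in the Cayley graph of $\{-1,1\}^{[M]^d}$.

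The difficulty is that $\mathcal G_N$ only allows a toggle when it is a genuine disjoint addition or a contained removal. The first thing I would try is to order the $\alpha$'s by a Gray-code-type or inclusion-respecting schedule, possibly using each cube several times and letting $L$ be any odd number, so that every toggle is legal. Small cases show such cycles exist. For $d=1$ the walk $\emptyset\to\{1\}\to\{1,2\}\to\emptyset$ uses the cube $\{1,2\}^1$. For $d\ge2$ I would build the cycle by induction on $d$, taking a product-type extension of a $(d-1)$-dimensional odd cycle along an added coordinate. If needed I would enlarge $M$ to create room for the auxiliary cubes.

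\textbf{Step 3 (transfer to $F_N^{(d)}$).} Take $N=M$, or any $N\ge M$, with $R=[M]^d$. Every move of the cycle changes only coordinates in $R$. So for each $h\in\{-1,1\}^{[N]^d\setminus R}$, the $L$ functions agreeing with $h$ off $R$ and equal to $1-2\mathbbm 1_{B_j}$ on $R$ form an odd cycle of length $L$ in $\mathcal G_N$. Cycles attached to different $h$ are disjoint.

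An edgeless set contains at most $(L-1)/2$ vertices from each such cycle and, trivially, at most half of the uncovered vertices. The covered vertices make up a fraction $L/2^{|R|}$ of $F_N^{(d)}$. Therefore any edgeless $\mathcal S$ satisfies
$$|\mathcal S|\le\Big(\frac12-\frac{1}{2^{|R|+1}}\Big)|F_N^{(d)}|.$$
The "trivially half" bound on uncovered vertices needs its own justification. I would obtain it by pairing each uncovered $f$ with $f\cdot(1-2\mathbbm 1_{\{1\}^d})$; this pairing is an edge of $\mathcal G_N$ and preserves coveredness, since it acts within $R$ and the set of cycle vertices can be chosen closed under it, or else handled by a direct count.

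With these bounds, $\epsilon=2^{-|R|-1}$ and $N_d=M$ give the theorem.
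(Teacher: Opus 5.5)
\textbf{Step 2 is not a lemma on the way to the theorem; it is equivalent to it.} A bipartite graph has an independent set containing at least half its vertices. So the theorem itself (with $N=N_d$) already forces $\mathcal G_{N_d}$ to contain an odd cycle inside the box $[N_d]^d$, and your Step 3 turns any such cycle back into the theorem. As written, you have reformulated the statement and left the reformulation open. ``Small cases'' and an unspecified product-type induction on $d$ are not an argument. The parity heuristic only gives an odd closed walk in the unconstrained Cayley graph, and legality is the whole difficulty.

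Legality really bites here. On $[d+1]^d$, the only non-trivial $\mathbb F_2$-dependency among the indicators of the cubes $\alpha^d$, $\emptyset\neq\alpha\subseteq[d+1]$, is the family of all $2^{d+1}-1$ of them. Hence any legal odd closed walk there must toggle every cube an odd number of times. In particular it must use the edge joining $\emptyset$ and $[d+1]^d$, and you need an actual schedule making all the other toggles legal.

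The missing idea is an induction on support size rather than on $d$. For $\emptyset\neq\beta\subseteq[d+1]$ with $|\beta|\le d$, put $\phi(\beta)=\{x\in[d+1]^d:\{x_1,\dots,x_d\}=\beta\}$, so that $\alpha^d$ is the disjoint union of the $\phi(\gamma)$ with $\emptyset\neq\gamma\subseteq\alpha$. Show that from any union of such blocks, the toggle of $\phi(\beta)$ can be realized by an odd number of legal moves:
\begin{enumerate}[(1)]
\item toggle lower blocks $\phi(\gamma)$, $\emptyset\neq\gamma\subsetneq\beta$, until $\beta^d$ is empty (if $\phi(\beta)$ is absent) or full (if present);
\item toggle $\beta^d$;
\item toggle the remaining lower blocks.
\end{enumerate}
This uses $2^{|\beta|}-2$ walks of odd length plus one move, so its length is odd. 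Now add $[d+1]^d$ to $\emptyset$ and delete its $2^{d+1}-2$ blocks one at a time; this closes an odd walk.

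This walk is the discrete skeleton of the paper's Lemma \ref{A.lem:RecurrenceLem}. Its chain of triangle inequalities, with $c_k$ tracking the number of moves, is a robust version of the same walk. The paper then needs the density increment of Lemma \ref{A.lem:HomogeneousCondition} to turn a small symmetric difference into an actual pair; your fibring count would make that step unnecessary.

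\textbf{Step 3 also has a hole as written.} The pairing $f\mapsto f\cdot(1-2\mathbbm 1_{\{1\}^d})$ changes $f|_R$. Nothing forces $\{B_0,\dots,B_{L-1}\}$ to be closed under $B\mapsto B\triangle\{(1,\dots,1)\}$, and ``a direct count'' is not specified. An odd cycle plus a perfect matching does not by itself push the independence ratio below $1/2$: a triangle with a pendant edge at each vertex has ratio exactly $1/2$.

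The fix is to take $N=M+1$ and pair via the point $(M+1,\dots,M+1)\notin R$ instead. That toggle is always legal and fixes $f|_R$, so the uncovered vertices split into matched pairs. Your count then gives $|\mathcal S|\le(\frac12-2^{-|R|-1})|F_N^{(d)}|$ for every edgeless $\mathcal S$. With both repairs you get $\epsilon=2^{-(d+1)^d-1}$ and $N=d+2$, by a route more elementary than the paper's.
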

The proof of \cref{A.thm:EpsilonPDHJ} will make use of the following two lemmas and some additional notation.
Fix $d\in\N$ and let $D=d+1$. For $N\in\N$ and  $t\in\N\cap(0,N/D]$, we define the set 
$$A_t:=\{(t-1)D+1,...,tD\}^d$$
and set $A_0=\emptyset$. For any $\mathcal Q\subseteq F_N^{(d)}$, we define the function $f_\mathcal Q:F_N^{(d)}\rightarrow \{-1,1\}$ by 
$$f_\mathcal Q(x)=1-2\mathbbm 1_\mathcal Q(x).$$
It follows that for any $\mathcal P,\mathcal Q,\mathcal R\subseteq F_N^{(d)}$ with $\mathcal P\neq\emptyset$,  
$$
\sum_{x\in \mathcal P}|f_\mathcal Q(x)-f_\mathcal R(x)|=2|(\mathcal Q\cap \mathcal P)\triangle(\mathcal R\cap \mathcal P)|$$
 and
 $$
 \sum_{x\in \mathcal P}|f_\mathcal Q(x)+f_\mathcal R(x)|=2(|(\mathcal Q\cap \mathcal P)\cap(\mathcal R\cap \mathcal P)|+|(\mathcal Q^c\cap \mathcal P)\cap(\mathcal R^c\cap \mathcal P)|).
 $$
For any $\mathcal Q\subseteq F_N^{(d)}$ and any $A\subseteq [N]^d$, we will let 
$$\sigma_A (f_\mathcal Q)=f_{(1-2\mathbbm 1_A)\cdot\mathcal Q}.$$
Let $(c_k)_{k=0}^\infty$ denote the sequence defined recursively by the rules (1) $c_0=2$ and (2) for every $k\in\N$, 
\begin{equation}\label{eq:Defn(c_k)}
c_k=\sum_{r=0}^{k-1}\binom{k}{r}c_r.
\end{equation}
\begin{lem}\label{A.lem:HomogeneousCondition}
Let $\epsilon=\frac{1}{4(c_D+1)}$, let 
$\tau\in\N$ be  the least natural number $n$ such that  
\begin{equation}\label{eq:DefnTau}
   (1+\frac{1}{2^{D^d}}\frac{2\epsilon}{1-2\epsilon})^n >\frac{2}{1-2\epsilon},
\end{equation}
and let $N:=\tau D$. For any $\mathcal S\subseteq F_N^{(d)}$ with $|\mathcal S|>(\frac{1}{2}-\epsilon)|F_N^{(d)}|$, there is a $t\in\{1,...,\tau\}$ with the property that for some $\xi\in F_N^{(d)}(\bigcup_{j=0}^{t-1}A_j)$ and  every $\gamma\in F_N^{(d)}(A_t)$,
\begin{equation}\label{A.eq:HomogeneousCondition}
    |(\gamma\xi\cdot\mathcal S)\cap F_N^{(d)}(\left(\bigcup_{j=0}^t A_j\right)^c)|>(\frac{1}{2}-2\epsilon)|F_N^{(d)}(\left(\bigcup_{j=0}^t A_j\right)^c)|.
\end{equation}
\end{lem}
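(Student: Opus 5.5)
The plan is a density-increment argument along the diagonal blocks $A_1,\dots,A_\tau$. These blocks are pairwise disjoint, so for each $t$ we have the product decomposition
$$F_N^{(d)}\Big(\big(\textstyle\bigcup_{j=0}^{t-1}A_j\big)^c\Big)=F_N^{(d)}(A_t)\cdot F_N^{(d)}\Big(\big(\textstyle\bigcup_{j=0}^{t}A_j\big)^c\Big).$$
Thus every coset of the smaller group is a fiber indexed by some $\gamma\in F_N^{(d)}(A_t)$. There are $M:=2^{D^d}$ such fibers.

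The fiber of $\xi\cdot\mathcal S$ over $\gamma$ is, after multiplying by $\gamma$, exactly
$$(\gamma\xi\cdot\mathcal S)\cap F_N^{(d)}\Big(\big(\textstyle\bigcup_{j=0}^t A_j\big)^c\Big),$$
because everything is an involution. Hence \eqref{A.eq:HomogeneousCondition} says precisely that every fiber over $A_t$ of the coset $\xi\cdot F_N^{(d)}((\bigcup_{j<t}A_j)^c)$ has relative density of $\mathcal S$ greater than $\frac12-2\epsilon$.

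I will construct inductively $\xi_0=1$ and $\xi_t\in F_N^{(d)}(\bigcup_{j=0}^{t}A_j)$. Write $\delta_t$ for the relative density of $\xi_t\cdot\mathcal S$ in $F_N^{(d)}((\bigcup_{j=0}^t A_j)^c)$. The invariant I maintain is
$$\delta_t\ge \Big(\tfrac12-\epsilon\Big)\Big(1+\tfrac{1}{M}\tfrac{2\epsilon}{1-2\epsilon}\Big)^t .$$
It holds at $t=0$ by hypothesis.

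At stage $t\le\tau$, I look at the $M$ fibers over $A_t$ of the coset determined by $\xi_{t-1}$. Their relative densities average exactly $\delta_{t-1}$. If all of them exceed $\frac12-2\epsilon$, the lemma holds with this $t$ and $\xi=\xi_{t-1}$. Otherwise some fiber has density at most $\frac12-2\epsilon$. Then the remaining $M-1$ fibers carry total mass at least $M\delta_{t-1}-(\frac12-2\epsilon)$, so some fiber $\gamma$ has density at least
$$\delta_{t-1}+\frac{\delta_{t-1}-\frac12+2\epsilon}{M-1}.$$
Since $\delta_{t-1}\ge\frac12-\epsilon$, the numerator is at least $\epsilon$. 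I then set $\xi_t=\gamma\xi_{t-1}$ and check that the gain is at least the multiplicative factor $1+\frac{1}{M}\frac{2\epsilon}{1-2\epsilon}$.

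If the process never stops, then after $\tau$ steps we get
$$\delta_\tau>\Big(\tfrac12-\epsilon\Big)\cdot\frac{2}{1-2\epsilon}=1$$
by the choice \eqref{eq:DefnTau} of $\tau$, which is impossible. So some $t\le\tau$ works. The requirement $N=\tau D$ is exactly what guarantees that all blocks $A_1,\dots,A_\tau$ fit inside $[N]^d$.

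The main obstacle I expect is the bookkeeping of the increment constant, so that it matches the factor $\frac{1}{2^{D^d}}\frac{2\epsilon}{1-2\epsilon}$ appearing in \eqref{eq:DefnTau}. The crude bound gives an additive gain of $\epsilon/(M-1)$, i.e. a multiplicative gain $1+\epsilon/((M-1)\delta_{t-1})$. If this is not directly comparable to the stated factor (for instance when $\delta_{t-1}$ is near $1$), I would instead run the increment on the complementary density $1-\delta_t$ or on the ratio $\delta_t/(\frac12-\epsilon)$. I would adjust whichever normalization makes the factor $\frac{2\epsilon}{1-2\epsilon}$ come out, since $\frac12-\epsilon=\frac{1-2\epsilon}{2}$ is the natural denominator.

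Two checks on the induction remain. First, $\delta_t$ stays at least $\frac12-\epsilon$, which is automatic because it is increasing. Second, the chosen $\xi_t$ indeed lies in $F_N^{(d)}(\bigcup_{j\le t}A_j)$.
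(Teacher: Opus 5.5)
Your argument is the paper's density-increment scheme along the diagonal blocks; the paper only phrases the dichotomy contrapositively. With $\theta=\frac{1}{M}\frac{2\epsilon}{1-2\epsilon}$ and $M=2^{D^d}$, the paper's two cases are these. Either every fiber of $\xi\cdot\mathcal S$ over $A_t$ has density below $(1+\theta)^t(\frac12-\epsilon)$, and averaging against a parent density $\ge(1+\theta)^{t-1}(\frac12-\epsilon)$ forces every fiber above $\frac12-2\epsilon$. Or some fiber reaches $(1+\theta)^t(\frac12-\epsilon)$, and one passes to it.

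The step you left unchecked is not actually an obstacle, and no renormalization is needed. The worry about $\delta_{t-1}$ near $1$ comes only from replacing the numerator $\delta_{t-1}-(\frac12-2\epsilon)$ by $\epsilon$. If you keep it, your averaging bound gives the relative gain
\[
\frac{\delta_t}{\delta_{t-1}}-1\ \ge\ \frac{1}{M-1}\Bigl(1-\frac{1/2-2\epsilon}{\delta_{t-1}}\Bigr),
\]
which is increasing in $\delta_{t-1}$. Since $\delta_{t-1}\ge\frac12-\epsilon$, it is at least its value at $\frac12-\epsilon$, namely $\frac{1}{M-1}\frac{2\epsilon}{1-2\epsilon}>\theta$.

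Hence $\delta_t\ge(1+\theta)\delta_{t-1}$ at every increment step. Your invariant propagates, and the termination via the choice of $\tau$ goes through exactly as you wrote it. This is the same inequality the paper verifies at the end of its reduction, in the form $(\frac12-\epsilon)(1-(M-1)\theta)=\frac12-2\epsilon+\frac{\epsilon}{M}>\frac12-2\epsilon$. Your version has the minor advantage of giving the step-wise bound $\delta_t\ge(1+\theta)\delta_{t-1}$ directly.
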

\begin{lem}\label{A.lem:RecurrenceLem}
    Let $t,N\in\N$ be such that $Dt\leq N$, set $X=F_N^{(d)}(\left(\bigcup_{j=0}^tA_j\right)^c)$, and let $\epsilon=\frac{1}{4(c_D+1)}$. 
    For any $\mathcal Q\subseteq F_N^{(d)}$
there is a non-empty $\alpha\subseteq \{(t-1)D+1,...,tD\}$ and a $\xi\in F_N^{(d)}(A_t\setminus \alpha^d)$ such that 
\begin{equation}\label{A.eq:SmallSymmetricDiff}
\sum_{x\in X}|f_{\xi\cdot \mathcal Q}(x)-\sigma_{\alpha^d}f_{\xi\cdot\mathcal Q}(x)|\leq 2(1-4\epsilon)|X|.
\end{equation}
\end{lem}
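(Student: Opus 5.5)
The plan is to argue by contradiction with a Gram-matrix (odd cycle) argument, in the spirit of the Fourier proof of the corollary in Section \ref{5.Sec}. Write $B=\{(t-1)D+1,\dots,tD\}$, so that $|B|=D=d+1$ and $A_t=B^d$. For each $\eta\in F_N^{(d)}(A_t)$ consider the fibre function $g_\eta:X\to\{-1,1\}$, $g_\eta(x)=f_{\mathcal Q}(\eta x)$. Since $X$ consists of functions supported off $\bigcup_{j\le t}A_j$ while $\eta$ is supported in $A_t$, the map $(\eta,x)\mapsto \eta x$ is injective. Unwinding $\sigma_{\alpha^d}$, the left-hand side of \eqref{A.eq:SmallSymmetricDiff} becomes $\sum_{x\in X}|g_\xi(x)-g_{\xi\gamma_\alpha}(x)|$, where $\gamma_\alpha=1-2\mathbbm 1_{\alpha^d}$ (up to the harmless relabelling $\xi\leftrightarrow\xi^{-1}=\xi$). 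Because the $g$'s are $\pm1$-valued, this sum equals $|X|(1-\langle g_\xi,g_{\xi\gamma_\alpha}\rangle)$, with the normalized inner product $\langle u,v\rangle=|X|^{-1}\sum_X uv$. The lemma is therefore equivalent to the following: there exist a non-empty $\alpha\subseteq B$ and a $\xi$ supported in $B^d\setminus\alpha^d$ with $\langle g_\xi,g_{\xi\gamma_\alpha}\rangle\ge -1+8\epsilon$.

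Assume the contrary, so that every admissible pair is almost antipodal: $\langle g_\xi,g_{\xi\gamma_\alpha}\rangle<-1+8\epsilon$. I will then build a finite family of sign patterns $\eta_1,\dots,\eta_m\in F_N^{(d)}(A_t)$, with $m$ controlled by $c_D$, together with a graph on them. Each edge $\{\eta_i,\eta_j\}$ should satisfy $\eta_i\eta_j=\gamma_\alpha$ for some non-empty $\alpha\subseteq B$, with one endpoint supported off $\alpha^d$, so that each edge is an admissible pair. The graph should contain an odd closed walk. The algebraic input guaranteeing an odd closed walk is the mod-$2$ identity already used in Lemma \ref{2.p_dIsVIPofDegreeD} and in Section \ref{5.Sec}: for the $D=d+1$ singletons $\{b_0\},\dots,\{b_d\}$ of $B$,
\[\prod_{\emptyset\neq S\subseteq\{0,\dots,d\}}\gamma_{\{b_j:j\in S\}}=1,\]
a product of $2^{D}-1$ (an odd number of) cube patterns. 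Given an odd closed walk $\eta_{i_0},\dots,\eta_{i_{\ell}}=\eta_{i_0}$ of length $\ell\le c_D+1$, I would use the triangle inequality along the walk. Almost antipodality gives $\|g_{\eta_{i_k}}+g_{\eta_{i_{k+1}}}\|_2^2<16\epsilon$, so $\|g_{\eta_{i_k}}+g_{\eta_{i_{k+1}}}\|_2<4\sqrt{\epsilon}$. Chaining these with alternating signs along the odd walk yields $2\|g_{\eta_{i_0}}\|_2\le \ell\cdot 4\sqrt{\epsilon}$. (I expect a sharper additive version, applied directly to symmetric differences of the sets, to produce exactly the threshold $\epsilon=\frac{1}{4(c_D+1)}$.) Since $\|g\|_2=1$, this contradicts the choice of $\epsilon$.

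The main obstacle is the ordering constraint that $\xi$ be supported off $\alpha^d$. This prevents simply multiplying the cubes $\gamma_{\{b_j:j\in S\}}$ in an arbitrary order, because a partial product may already be non-trivial on the next cube. My first attempt is a recursive construction matching the recursion \eqref{eq:Defn(c_k)}. To realize the cube on a $k$-subset $\beta\subseteq B$ as a step from a base point, I first pass through the patterns associated with each proper subset $\beta'\subsetneq\beta$ (there are $\binom{k}{r}$ of size $r$), handled recursively with cost $c_r$. This arranges that at the moment $\gamma_{\beta^d}$ is applied, the current pattern vanishes on $\beta^d$, or can be replaced by one that does via the already-built sub-walks. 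Counting edges then gives a walk length bounded by $c_k=\sum_{r<k}\binom{k}{r}c_r$, starting from $c_0=2$. The parity of the final walk is odd precisely because of the odd number $2^{D}-1$ of terms in the identity above. Verifying that this recursion keeps every step admissible, and that parity is preserved through the recursion, is the step I expect to require the most care.
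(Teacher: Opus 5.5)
Your reformulation is correct. Your architecture is also the paper's: assume every admissible pair $(\xi,\alpha)$ is nearly antipodal, then chain near-antipodality around an odd closed walk of admissible moves whose length is governed by the recursion for $c_k$. But the proposal stops exactly where the content of the lemma lies. The admissible odd walk is never constructed, and you do not state the inductive claim your recursion is meant to prove.

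The identity $\prod_{\emptyset\neq S\subseteq \Lambda}\gamma_S=1$ (with $\Lambda$ your $B$) only explains the parity. It says nothing about how to order, and possibly repeat, the cube moves so that each move has an endpoint equal to $1$ on the relevant cube.

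The paper does this as follows.
\begin{itemize}
\item It decomposes cubes into surjectivity layers $\phi(\beta)=\{a\in\beta^d:\ \text{every } b\in\beta\text{ occurs in } a\}$. Then $\beta^d=\bigsqcup_{\emptyset\neq\beta'\subseteq\beta}\phi(\beta')$, and since $\phi(\Lambda)=\emptyset$, $\Lambda^d$ is the disjoint union of the $2^D-2$ layers with $1\le|\beta|\le d$.
\item It proves by induction on $|\beta|$ the stronger statement \eqref{A.eq:partialBound}: for \emph{every} union $\Gamma'$ of layers of subsets other than $\beta$, $\sigma_{\Gamma'}f_{\mathcal Q}$ and $\sigma_{\Gamma'\cup\phi(\beta)}f_{\mathcal Q}$ are nearly antipodal, with error $4c_{|\beta|}\epsilon|X|$.
\item In the inductive step, let $\Gamma$ be the part of $\Gamma'$ outside $\beta^d$. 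Peel the proper-subset layers of $\Gamma'$ off down to $\Gamma$. Make the single cube move $\Gamma\to\Gamma\cup\beta^d$, which is admissible because $\Gamma\cap\beta^d=\emptyset$. Then peel off from $\Gamma\cup\beta^d$ the proper-subset layers not in $\Gamma'$.
\item This gives $2^{|\beta|}-2$ layer moves plus one cube move. They are combined by $+$ or $-$ triangle inequalities according to the parity of the number of layer moves on each side.
\item Quantifying over arbitrary $\Gamma'$ is what makes the recursion close.
\item The contradiction comes from the even chain of $2^D-2$ layer moves from $\emptyset$ to $\Lambda^d$. This forces $f_{\mathcal Q}\approx\sigma_{\Lambda^d}f_{\mathcal Q}$, against the admissible pair $\xi=1$, $\alpha=\Lambda$.
\end{itemize}

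Second, the quantitative step as written does not close. Passing to $\|\cdot\|_2$ loses a square root. With $\epsilon=\frac{1}{4(c_D+1)}$ your chain gives only $2<4\ell\sqrt{\epsilon}=2\ell/\sqrt{c_D+1}$, which is no contradiction once $\ell\ge\sqrt{c_D+1}$. Already for $d=1$ every odd closed walk has $\ell\ge 3$, while $\sqrt{c_2+1}=\sqrt{7}<3$.

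Since the functions are $\pm1$-valued, use instead $\frac{1}{|X|}\sum_{x\in X}|u(x)+v(x)|=1+\langle u,v\rangle<8\epsilon$ for each admissible pair, together with the pointwise triangle inequality, as the paper does. An odd closed walk of $\ell$ admissible moves then gives $2<8\ell\epsilon$. This contradicts the choice of $\epsilon$ whenever $\ell\le c_D+1$; the walk implicit in the paper has $\ell=c_D/2$.
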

\begin{proof}[ Proof of \cref{A.thm:EpsilonPDHJ}]
    Fix $d\in\N$, let $\epsilon=\frac{1}{4(c_D+1)}$, and let $\tau,N$ be as in the statement of Lemma \ref{A.lem:HomogeneousCondition}. Let $\mathcal S\subseteq F_N^{(d)}$ be such that $|\mathcal S|>(\frac{1}{2}-\epsilon)|F_N^{(d)}|$. By  Lemma \ref{A.lem:HomogeneousCondition}, we can find a $t\in\{1,...,\tau\}$ and a
 $\xi_1\in F_N^{(d)}(\bigcup_{j=0}^{t-1}A_j)$ with the property that for   every $\gamma\in F_N^{(d)}(A_t)$, formula \eqref{A.eq:HomogeneousCondition} holds. Set $\mathcal Q=(\xi_1\cdot\mathcal S)\cap F_N^{(d)}(\left(\bigcup_{j=0}^{t-1}A_j\right)^c) $ and let 
 $$
X=F_N^{(d)}(\left(\bigcup_{j=0}^tA_j\right)^c).
 $$
 By Lemma \ref{A.lem:RecurrenceLem}, there is a non-empty $\alpha\subseteq \{(t-1)D+1,...,tD\}$ and a $\xi_2\in F_N^{(d)}(A_t\setminus \alpha^d)$ such that 
 $$
| \Big[(\xi_2\cdot\mathcal Q)\triangle 
 ((1-2\mathbbm 1_{\alpha^d})\xi_2\cdot\mathcal Q)\Big]\cap X|\leq (1-4\epsilon)|X|.
 $$
 Because Lemma \ref{A.lem:HomogeneousCondition} implies that 
$$
|(\xi_2\xi_1\cdot\mathcal S)\cap X|,|((1-2\mathbbm 1_{\alpha^d})\xi_2\xi_1\cdot\mathcal S)\cap X|>(\frac{1}{2}-2\epsilon)|X|,
$$
we can find an $f\in (\xi_2\xi_1\cdot\mathcal S)\cap X$ such that $(1-2\mathbbm 1_{\alpha^d})f\in  (\xi_2\xi_1\cdot\mathcal S)$. It follows that 
$\xi_1f\in (\xi_2\cdot\mathcal S)\cap F_N^{(d)}(A_t^c)$
and, so, that 
$$
\xi_2\xi_1f\in \mathcal S\cap F_N^{(d)}((\alpha^d)^c)\text{ and }(1-2\mathbbm 1_{\alpha^d})\xi_2\xi_1f\in\mathcal S. 
$$
We are done. 
\end{proof}
\subsection{The proof of Lemma \ref{A.lem:HomogeneousCondition}}
Set $\delta=\frac{1}{2^{D^d}}\frac{2\epsilon}{1-2\epsilon}$. We claim that in order to prove \eqref{A.eq:HomogeneousCondition}, it suffices to show that for some $t\in\{1,...,\tau\}$ and some $\xi\in F_N^{(d)}(\bigcup_{j=0}^{t-1} A_j)$, one has that  for every $\gamma\in F_N^{(d)}(A_t)$,
\begin{equation}\label{A.eq:UniformUpperBound}
    |(\gamma\xi\cdot\mathcal S)\cap F_N^{(d)}(\left(\bigcup_{j=0}^t A_j\right)^c)|<(1+\delta)^t(\frac{1}{2}-\epsilon)|F_N^{(d)}(\left(\bigcup_{j=0}^t A_j\right)^c)|
\end{equation}
and that 
\begin{equation}\label{A.eq:LowerBoundForGoodCilinder}
    (1+\delta)^{t-1}(\frac{1}{2}-\epsilon)|F_N^{(d)}(\left(\bigcup_{j=0}^{t-1}A_j\right)^c)|\leq |(\xi\cdot \mathcal S)\cap F_N^{(d)}(\left(\bigcup_{j=0}^{t-1}A_j\right)^c)|
\end{equation}
Indeed, fix $\gamma\in F_N^{(d)}(A_t)$. Note that 
\begingroup
\allowdisplaybreaks
\begin{multline}
|(\xi\cdot \mathcal S)\cap F_N^{(d)}(\left(\bigcup_{j=0}^{t-1}A_j\right)^c)|\\
=\sum_{\eta\in F_N^{(d)}(A_t)}|(\xi\cdot \mathcal S)\cap[\eta\cdot F_N^{(d)}(\left(\bigcup_{j=0}^{t}A_j\right)^c)]|
=\sum_{\eta\in F_N^{(d)}(A_t)}|(\eta\xi\cdot \mathcal S)\cap F_N^{(d)}(\left(\bigcup_{j=0}^{t}A_j\right)^c)|\\
<|(\gamma\xi\cdot\mathcal S)\cap F_N^{(d)}(\left(\bigcup_{j=0}^t A_j\right)^c)|+(2^{D^d}-1)(1+\delta)^t(\frac{1}{2}-\epsilon)|F_N^{(d)}(\left(\bigcup_{j=0}^t A_j\right)^c)|.
\end{multline}
\endgroup
So, because $1+(1-2^{D^d})\delta>0$,
\begingroup
\allowdisplaybreaks
\begin{multline}
(\frac{1}{2}-\epsilon)(1+(1-2^{D^d})\delta)|F_N^{(d)}(\left(\bigcup_{j=0}^{t}A_j\right)^c)|\\
=(\frac{1}{2}-\epsilon)|F_N^{(d)}(\left(\bigcup_{j=0}^{t-1}A_j\right)^c)|-(2^{D^d}-1)(1+\delta)(\frac{1}{2}-\epsilon)|F_N^{(d)}(\left(\bigcup_{j=0}^t A_j\right)^c)|\\
   \leq  (1+\delta)^{t-1}(\frac{1}{2}-\epsilon)|F_N^{(d)}(\left(\bigcup_{j=0}^{t-1}A_j\right)^c)|-(2^{D^d}-1)(1+\delta)^t(\frac{1}{2}-\epsilon)|F_N^{(d)}(\left(\bigcup_{j=0}^t A_j\right)^c)|\\
   <|(\gamma\xi\cdot\mathcal S)\cap F_N^{(d)}(\left(\bigcup_{j=0}^t A_j\right)^c)|.
\end{multline}
\endgroup
The result now follows by noting that 
\begin{multline*}
(\frac{1}{2}-\epsilon)\left(1+(1-2^{D^d})\delta\right)
    = (\frac{1}{2}-\epsilon)\left( 1+\left(\frac{1}{2^{D^d}}-1\right)\frac{2\epsilon}{1-2\epsilon}\right)\\
    =\frac{1-2\epsilon}{2}(\frac{1-4\epsilon}{1-2\epsilon}+\frac{1}{2^{D^d}}\frac{2\epsilon}{1-2\epsilon})>\frac{1}{2}-2\epsilon.
\end{multline*}
We now prove that \eqref{A.eq:UniformUpperBound} and \eqref{A.eq:LowerBoundForGoodCilinder} hold. To do this, suppose for contradiction that there is no $t\in\{1,...,\tau\}$ and no $\xi\in F_N^{(d)}(\bigcup_{j=0}^{t-1}A_j)$ for which both \eqref{A.eq:UniformUpperBound} and \eqref{A.eq:LowerBoundForGoodCilinder} hold simultaneously. We claim that there are functions $\gamma_0,\gamma_1,...,\gamma_\tau\in F_N^{(d)}$ with $\gamma_j\in F_N^{(d)}(A_j)$ for each $j\in\{0,...,\tau\}$ (so, in particular, $\gamma_0=\mathbbm 1_{[N]^d}$) and the property that for each $r\in\{0,...,\tau\}$,
\begin{equation}\label{A.eq:TooBig}
|\left(\prod_{j=0}^r \gamma_j\cdot \mathcal S\right)\cap F_N^{(d)}(\left(\bigcup_{j=0}^r A_j\right)^c)|\geq (1+\delta)^r(\frac{1}{2}-\epsilon)|F_N^{(d)}(\left(\bigcup_{j=0}^r A_j\right)^c)|
\end{equation}
Indeed, when $r=0$, \eqref{A.eq:TooBig} becomes the assumption of the Lemma. Furthermore, if \eqref{A.eq:TooBig} holds for some $r\in\{0,...,\tau-1\}$, then \eqref{A.eq:LowerBoundForGoodCilinder} holds with $r=t-1$ and
$\xi=\prod_{j=0}^r\gamma_j$. So, by our assumption, we can find $\gamma_{r+1}\in F_N^{(d)}(A_{r+1})$ for which  \eqref{A.eq:UniformUpperBound} does not hold. It follows that the functions $\gamma_1,...,\gamma_{r+1}$ satisfy \eqref{A.eq:TooBig}. Starting  with $r=0$, we can now apply this algorithm to find $\gamma_1,...,\gamma_\tau$ for which \eqref{A.eq:TooBig} holds.
Taking $r=\tau$ in \eqref{A.eq:TooBig}, we see that \eqref{eq:DefnTau} implies that
$$
|\left(\prod_{j=0}^\tau \gamma_j\cdot \mathcal S\right)\cap F_N^{(d)}(\left(\bigcup_{j=0}^\tau A_j\right)^c)|> |F_N^{(d)}(\left(\bigcup_{j=0}^\tau A_j\right)^c)|
$$
reaching the desired contradiction.\hfill \qedsymbol 
\subsection{The proof of Lemma \ref{A.lem:RecurrenceLem}}
Let $\Lambda=\{D(t-1)+1,...,Dt\}$.
Suppose for contradiction that \eqref{A.eq:SmallSymmetricDiff} does not hold and for every non-empty $\beta\subseteq \Lambda$ with $|\beta|\leq d,$ set 
$$\phi(\beta):=\{(a_1,...,a_d)\in \beta^d\,|\,\forall b\in\beta\exists i\in\{1,...,d\},\,a_i=b\}.$$
We will also put $\phi(\emptyset)=\emptyset$.\\
First we show that for any collection of non-empty and distinct sets $\beta_1,...,\beta_M\subseteq \Lambda$ with $$\max_{1\leq j\leq M}|\beta_j|\leq d,$$  one has that
\begin{equation}\label{A.eq:partialBound}
\sum_{x\in X} |\sigma_{\bigcup_{j=0}^{M-1} \phi(\beta_j)}f_\mathcal Q(x)+\sigma_{\bigcup_{j=0}^M \phi(\beta_j)}f_\mathcal Q(x)|<4c_{|\beta_M|}\epsilon|X|,
\end{equation}
where $\beta_0=\emptyset$.\\
To do this, set $\Gamma=\bigcup_{j=0}^M\phi(\beta_j)\setminus \beta_M^d$ and let $\alpha_0,...,\alpha_n$ be an enumeration of the set $\{\beta_j\,|\,\beta_j\subseteq \beta_M\}$ with $\alpha_0=\emptyset$ and $\alpha_n=\beta_M$. When $|\alpha_n|=1$, we have that $n=1$ and $\alpha_n=\{a\}$ for some $a\in \Lambda$. Thus, since
$$
\sum_{x\in X}|\sigma_\Gamma f_\mathcal Q(x)+\sigma_{\phi(\alpha_1)\cup\Gamma}f_\mathcal Q(x)|+\sum_{x\in X}|\sigma_\Gamma f_\mathcal Q(x)-\sigma_{\phi(\alpha_1)\cup\Gamma}f_\mathcal Q(x)|=2|X|,
$$
we obtain by our assumption about \eqref{A.eq:SmallSymmetricDiff} that 
$$
\sum_{x\in X}|\sigma_\Gamma f_\mathcal Q(x)+\sigma_{\phi(\beta_M)\cup\Gamma}f_\mathcal Q(x)|<8\epsilon|X|=4c_1\epsilon|X|.
$$
If $d=1$, we have nothing else to show. Thus, assume $d>1$ and take $k\in\{1,...,d-1\}$. Suppose that \eqref{A.eq:partialBound} holds whenever $|\beta_M|\leq k$ and consider now the case when $|\beta_M|=k+1$. Let $\gamma_0,...,\gamma_{2^{k+1}-1}$ be an enumeration of all the subsets of $\beta_M$ with the additional property that for each $j\in\{0,...,n\}$, $\gamma_j=\alpha_j$. Note that $\bigcup_{j=1}^{2^{k+1}-1}\phi(\gamma_j)=\beta_M^d$ and that for each $r\in\{1,...,k+1\}$, 
$$
|\{\gamma_j\,|\,|\gamma_j|=r\}|=\binom{k+1}{r}.
$$
In the case that $n$ is even these observations yield that
\begingroup
\allowdisplaybreaks
\begin{multline*}
\sum_{x\in X}|\sigma_{\Gamma\cup\bigcup_{j=0}^{n-1}\phi(\gamma_j)} f_\mathcal Q(x)+\sigma_{\Gamma\cup\bigcup_{j=0}^{n}\phi(\gamma_j)}f_\mathcal Q(x)|
\leq \sum_{x\in X}|\sigma_{\Gamma\cup\bigcup_{j=0}^{n-1}\phi(\gamma_j)} f_\mathcal Q(x)+\sigma_{\Gamma}f_\mathcal Q(x)|\\
+\sum_{x\in X}|\sigma_{\Gamma} f_\mathcal Q(x)+\sigma_{\Gamma\cup\beta_M^d}f_\mathcal Q(x)|+\sum_{x\in X}|\sigma_{\Gamma\cup\beta_M^d} f_\mathcal Q(x)
+\sigma_{\Gamma\cup\bigcup_{j=0}^{n}\phi(\gamma_j)}f_\mathcal Q(x)|\\
<8\epsilon|X|+\sum_{x\in X}|\sigma_{\Gamma\cup\bigcup_{j=0}^{n-1}\phi(\gamma_j)} f_\mathcal Q(x)+\sigma_{\Gamma}f_\mathcal Q(x)|+\sum_{x\in X}|\sigma_{\Gamma\cup\beta_M^d} f_\mathcal Q(x)
+\sigma_{\Gamma\cup\bigcup_{j=0}^{n}\phi(\gamma_j)}f_\mathcal Q(x)|\\
\leq 8\epsilon|X|+\sum_{t=0,\,t\neq n-1}^{2^{k+1}-2}\left(\sum_{x\in X}|\sigma_{\Gamma\cup\bigcup_{j=0}^t\phi(\gamma_j)} f_\mathcal Q(x)
+\sigma_{\Gamma\cup\bigcup_{j=0}^{t+1}\phi(\gamma_j)}f_\mathcal Q(x)|\right)\\
=4c_0\epsilon|X|+\sum_{t=0,\,t\neq n-1}^{2^{k+1}-2}\left(\sum_{x\in X}|\sigma_{\Gamma\cup\bigcup_{j=0}^t\phi(\gamma_j)} f_\mathcal Q(x)+\sigma_{\Gamma\cup\bigcup_{j=0}^{t+1}\phi(\gamma_j)}f_\mathcal Q(x)|\right)\\
<(4c_0+4\sum_{r=1}^k \binom{k+1}{r}c_r)\epsilon|X|=4c_{k+1}\epsilon|X|.
\end{multline*}
\endgroup
When $n$ is odd, a similar argument can be used to show that 
\begin{multline*}
\sum_{x\in X}|\sigma_{\Gamma\cup\bigcup_{j=0}^{n-1}\phi(\gamma_j)} f_\mathcal Q(x)+\sigma_{\Gamma\cup\bigcup_{j=0}^{n}\phi(\gamma_j)}f_\mathcal Q(x)|
\leq \sum_{x\in X}|\sigma_{\Gamma\cup\bigcup_{j=0}^{n-1}\phi(\gamma_j)} f_\mathcal Q(x)-\sigma_{\Gamma}f_\mathcal Q(x)|\\
+\sum_{x\in X}|\sigma_{\Gamma} f_\mathcal Q(x)+\sigma_{\Gamma\cup\beta_M^d}f_\mathcal Q(x)|+\sum_{x\in X}|\sigma_{\Gamma\cup\beta_M^d} f_\mathcal Q(x)
-\sigma_{\Gamma\cup\bigcup_{j=0}^{n}\phi(\gamma_j)}f_\mathcal Q(x)|<4c_{k+1}\epsilon|X|,
\end{multline*}
which proves that \eqref{A.eq:partialBound} holds.\\
To reach the desired contradiction, notice that $\Lambda$ has only $2^D-2$ different non-empty subsets with no more than $d$ elements. Let $\beta_1,...,\beta_{2^D-2}$ be a list of these subsets. Because $\Lambda^d=\bigcup_{j=1}^{2^D-2}\phi(\beta_j)$, we have 
\begin{multline}
    \sum_{x\in X}|f_\mathcal Q(x)-\sigma_{\Lambda^d}f_\mathcal Q(x)|
    \leq \sum_{t=1}^{2^{D}-2}\sum_{x\in X}|\sigma_{\bigcup_{j=0}^{t-1}\phi(\beta_j)}f_{\mathcal Q}(x)+\sigma_{\bigcup_{j=0}^{t}\phi(\beta_j)}f_{\mathcal Q}(x)|\\
    <4\sum_{r=1}^d\binom{D}{r}c_r\epsilon|X|<4c_D\epsilon|X|=(1-4\epsilon)|X|.
\end{multline}
Thus, by our assumption about \eqref{A.eq:SmallSymmetricDiff},
$$2(1-4\epsilon)\leq \frac{1}{|X|} \sum_{x\in X}|f_\mathcal Q(x)-\sigma_{\Lambda^d}f_\mathcal Q(x)|<(1-4\epsilon), $$
a contradiction. \hfill \qedsymbol
\section{Connection between positive definite sequences and measurable recurrence}\label{B.Sec}
In this appendix we show that Conjecture \ref{0.Conjecture2} (=\cite[Conjecture 2]{berMcCuIPPolySzemeredi}) is equivalent to Conjecture \ref{0.Conjecture A} and prove Proposition \hyperlink{0.prop:UnitaryLowerBound}{E}. As we will see, both of these results are obtained with the help of the following lemma. Recall that given a countable, discrete (potentially finite) abelian group $(G,+)$, a map $\varphi:G\rightarrow \mathbb C$ is called a \textit{positive definite sequence} if for any $k\in\N$, any complex numbers $c_1,...,c_k\in\mathbb C$, and any $g_1,...,g_k\in G$,
$$
\sum_{i,j=1}^k c_j\overline{c_i}\varphi(g_j-g_i)\geq 0.
$$
\begin{lem}\label{B.lem:PositiveDefinite}
    Let $(G,+)$ be a countable, discrete abelian group and let $(\delta_g)_{g\in G}$ be a $[-1,1]$-valued  sequence satisfying $\delta_{0_G}=1$. If $(\delta_g)_{g\in G}$ is positive definite, then for every $\lambda\in (0,1]$   
    there exists an invertible probability preserving system $(X_\lambda,\mathcal{A}_\lambda,\mu_\lambda, (T_\lambda^g)_{g\in G})$ and a set $A_\lambda\in\mathcal{A}_\lambda$ with $\mu_\lambda(A_\lambda)=1/2$ such that 
    \begin{equation}\label{B.eq:MeasureCorrelation}
\frac{\sin^{-1}(\lambda\delta_g)}{2\pi}+\mathbbm 1_{\{0_G\}}(g)\frac{\pi-2\sin^{-1}(\lambda)}{4\pi}=\mu_\lambda(A_\lambda\cap T_\lambda^{-g}A_\lambda)-\mu_\lambda^2(A_\lambda)
    \end{equation}
for every $g\in G$. Furthermore,  the sequence $(\delta_g)_{g\in G}$ is positive definite if and only if  for every $\lambda\in (0,1]$ the sequence     
$$
\Phi_g^{(\lambda)}:=\frac{\sin^{-1}(\lambda\delta_g)}{2\pi},\,g\in G,
$$
is positive definite. 
 \end{lem}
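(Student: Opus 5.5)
The plan is to realize the required systems as \emph{Gaussian systems} and to compute the correlations with Sheppard's arcsine formula. For the equivalence at the end, I would use the Taylor expansion of $\sin^{-1}$ together with the Schur product theorem.

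\textbf{Construction.} Fix $\lambda\in(0,1]$ and set
$$
c_g:=\lambda\delta_g+(1-\lambda)\mathbbm 1_{\{0_G\}}(g),\qquad g\in G.
$$
This is a nonnegative combination of two positive definite sequences ($\delta$ and $\mathbbm 1_{\{0_G\}}$), so it is positive definite. It is real-valued, with $c_{0_G}=1$, and $c_{-g}=c_g$, since a real positive definite sequence is symmetric. By Kolmogorov's extension theorem there is a centered stationary Gaussian process $(Z_g)_{g\in G}$ on $X_\lambda=\R^G$ with covariance $\mathbb E[Z_hZ_{h+g}]=c_g$. Let $\mu_\lambda$ be its law and $\mathcal A_\lambda$ the product $\sigma$-algebra. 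Stationarity means the shifts $(T_\lambda^g\omega)(h)=\omega(h+g)$ preserve $\mu_\lambda$; they form an invertible $G$-action. Put $A_\lambda=\{\omega:\omega(0_G)>0\}$. By symmetry of the centered Gaussian, $\mu_\lambda(A_\lambda)=1/2$.

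\textbf{Correlation formula.} The pair $(Z_0,Z_g)$ is a standard bivariate Gaussian with correlation $c_g$. Sheppard's formula gives
$$
\mathbb P(Z_0>0,Z_g>0)=\frac14+\frac{\sin^{-1}(c_g)}{2\pi},
$$
which I would verify via the rotation/angle argument, including the degenerate case $c_g=\pm1$. For $g\neq 0_G$ we have $c_g=\lambda\delta_g$, which is exactly \eqref{B.eq:MeasureCorrelation}. For $g=0_G$ the right-hand side of \eqref{B.eq:MeasureCorrelation} is $\tfrac12-\tfrac14=\tfrac14$. The left-hand side is $\frac{\sin^{-1}\lambda}{2\pi}+\frac{\pi-2\sin^{-1}\lambda}{4\pi}=\frac14$, so the formula holds there too.

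\textbf{Equivalence.} For "$\Rightarrow$", write
$$
\sin^{-1}x=\sum_{k\ge0}a_kx^{2k+1},\qquad a_k\ge0 .
$$
The series converges absolutely on $[-1,1]$, and $|\lambda\delta_g|\le1$. By the Schur product theorem, each pointwise power $(\lambda\delta_g)^{2k+1}$ is positive definite. Nonnegative combinations and pointwise limits of positive definite sequences remain positive definite, so $\Phi^{(\lambda)}$ is positive definite. For "$\Leftarrow$", note that $2\pi\Phi^{(\lambda)}_g/\lambda\to\delta_g$ pointwise as $\lambda\to0^+$. Each $2\pi\Phi^{(\lambda)}/\lambda$ is positive definite, and the defining inequalities pass to pointwise limits, so $\delta$ is positive definite.

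\textbf{Main obstacle.} I expect the only delicate point to be the measure-theoretic setup. One must check that the Gaussian process exists on a standard space with an invertible measure-preserving $G$-action, and that Sheppard's formula holds, including the boundary cases $|c_g|=1$. The rest is bookkeeping.
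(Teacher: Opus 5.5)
Your proposal is correct and follows the paper's proof essentially step for step. The paper also takes the Gaussian system of $\lambda\delta_g+(1-\lambda)\mathbbm 1_{\{0_G\}}(g)$ with $A_\lambda=X_{0_G}^{-1}[0,\infty)$ and applies the arcsine (Sheppard) formula, which it cites from \cite[Lemma 2.2]{ZelIP0Khintchine2023} rather than re-deriving; it then proves the equivalence exactly as you do, via the nonnegative Taylor coefficients of $\sin^{-1}$ with Schur's product theorem and the limit $\sin^{-1}(\lambda\delta_g)/\lambda\to\delta_g$ as $\lambda\to0^+$.
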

 The importance of  Lemma \ref{B.lem:PositiveDefinite} for the results presented in this appendix is that it establishes that for a set $R\subseteq G$ the following three properties are equivalent:
 \begin{enumerate}[(a)]
 \item $R$ is a \textit{set of nice recurrence}:  For any invertible probability measure preserving system $(X,\mathcal A,\mu, (T^g)_{g\in G})$, any $A\in\mathcal A$, and any $\epsilon>0$, there are infinitely many $g\in R$ with  $\mu(A\cap T^{-g}A)>\mu^2(A)-\epsilon$.
     \item $R$ is a \textit{ set of $\frac{1}{2}$-nice recurrence}: For any invertible probability measure preserving system $(X,\mathcal A,\mu, (T^g)_{g\in G})$, any $A\in\mathcal A$ with $\mu(A)=1/2$, and any $\epsilon>0$, there are infinitely many $g\in R$ with  $\mu(A\cap T^{-g}A)>\mu^2(A)-\epsilon$.
     \item For any unitary action $(U^g)_{g\in G}$ of $G$ on a Hilbert space $\mathcal H$,  any $f\in\mathcal H$, and any $\epsilon>0$, there are infinitely many $g\in R$ such that $\Re(\langle f,U^g f\rangle)>-\epsilon$. 
 \end{enumerate}
 In Subsection B.1 we will show that Conjecture \ref{0.Conjecture2} is equivalent to Conjecture \ref{0.Conjecture A}. In Subsection B.2 we prove Proposition \hyperlink{0.prop:UnitaryLowerBound}{E}. In Subsection B.3 we prove Lemma \ref{B.lem:PositiveDefinite}.
 \subsection{The equivalent forms of  \cite[Conjecture 2]{berMcCuIPPolySzemeredi}}\label{B.Sec:Conjecture2}
 The following lemma is instrumental in proving the equivalence of  Conjectures \ref{0.Conjecture A} and \ref{0.Conjecture2}.
 \begin{lem}\label{B.lem:NiceInjectiveRestrictionForVIP}
     Let $\varphi:\FF\rightarrow\FF$ be an injective homomorphism. There is an injective homomorphism $\psi:\FF\rightarrow \FF$ with the property that for any non-zero $\xi,\eta\in\FF$, if $\max \alpha_\xi<\min \alpha_\eta$, then $\max \alpha_{\psi(\xi)}<\min \alpha_{\psi(\eta)}$ and  $\max \alpha_{\varphi(\psi(\xi))}<\min \alpha_{\varphi(\psi(\eta))}$.
 \end{lem}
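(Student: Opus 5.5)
We construct $\psi$ on the standard basis $e_1,e_2,\dots$ of $\FF$ (where $\alpha_{e_k}=\{k\}$) and extend by linearity. More precisely, we choose nonzero vectors $\zeta_1,\zeta_2,\dots\in\FF$ recursively with
\[
\max\alpha_{\zeta_k}<\min\alpha_{\zeta_{k+1}}
\quad\text{and}\quad
\max\alpha_{\varphi(\zeta_k)}<\min\alpha_{\varphi(\zeta_{k+1})}
\qquad\text{for all } k\in\N .
\]
We then set $\psi(\xi)=\sum_{k\in\alpha_\xi}\zeta_k$. If the $\alpha_{\zeta_k}$ are pairwise disjoint and nonempty, $\psi$ is an injective homomorphism, since distinct finite sums of disjoint nonempty sets are distinct. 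Moreover $\alpha_{\psi(\xi)}=\bigcup_{k\in\alpha_\xi}\alpha_{\zeta_k}$, and because $\varphi$ is a homomorphism, $\varphi(\psi(\xi))=\sum_{k\in\alpha_\xi}\varphi(\zeta_k)$ with the $\alpha_{\varphi(\zeta_k)}$ also disjoint, so $\alpha_{\varphi(\psi(\xi))}=\bigcup_{k\in\alpha_\xi}\alpha_{\varphi(\zeta_k)}$. Hence if $\max\alpha_\xi<\min\alpha_\eta$, the blocks used for $\xi$ all precede those used for $\eta$. The ordering of the $\zeta_k$ and of their images then gives $\max\alpha_{\psi(\xi)}<\min\alpha_{\psi(\eta)}$ and $\max\alpha_{\varphi(\psi(\xi))}<\min\alpha_{\varphi(\psi(\eta))}$.

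The recursive step is the crux. Suppose $\zeta_1,\dots,\zeta_k$ have been chosen, and let $M$ exceed both $\max\alpha_{\zeta_k}$ and $\max\alpha_{\varphi(\zeta_k)}$. Let $W_M=\{\xi\,|\,\alpha_\xi\subseteq\{1,\dots,M\}\}$, a subspace of dimension $M$. We need a nonzero $\zeta$ with $\min\alpha_\zeta>M$ and $\min\alpha_{\varphi(\zeta)}>M$, i.e.\ a nonzero element of
\[
K_M=\{\zeta\,|\,\pi_M(\zeta)=0,\ \pi_M(\varphi(\zeta))=0\},
\]
where $\pi_M$ denotes the linear projection onto the coordinates $1,\dots,M$. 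The subspace $K_M$ is the kernel of the linear map $\zeta\mapsto(\pi_M\zeta,\pi_M\varphi(\zeta))$ from $\FF$ into $\mathbb F_2^{2M}$. Since $\FF$ is infinite-dimensional and the target is finite, this kernel has finite codimension and is in particular infinite, so a nonzero $\zeta_{k+1}\in K_M$ exists. Injectivity of $\varphi$ guarantees $\varphi(\zeta_{k+1})\neq0$, so $\alpha_{\varphi(\zeta_{k+1})}$ is nonempty and its minimum is defined. For the base case, take $\zeta_1$ to be any nonzero vector.

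This finite-codimension kernel argument is the only substantive step, and it is routine linear algebra. The remaining work is bookkeeping, namely checking that disjointness of the supports turns sums into unions of supports both before and after applying $\varphi$, which the construction ensures.
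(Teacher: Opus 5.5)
Your proposal is correct and follows essentially the paper's route: choose a sequence $(\zeta_k)$ such that both the $\zeta_k$ and the $\varphi(\zeta_k)$ have supports in increasing block order, and let $\psi$ be the homomorphism sending the $k$-th basis vector to $\zeta_k$. The only difference is the existence step, where the paper uses a pigeonhole argument (among $2^k+1$ nonzero vectors, two have images agreeing on $\{1,\dots,k\}$, so their sum has image supported beyond $k$) and you use the nontrivial kernel of a linear map into $\mathbb F_2^{2M}$; your version also makes explicit that both $\zeta$ and $\varphi(\zeta)$ are pushed past $M$, which the paper's sketch leaves implicit.
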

 \begin{proof}[Proof of Lemma \ref{B.lem:NiceInjectiveRestrictionForVIP}]
     Note that by the pigeonhole principle, for any $(2^{k}+1)$-element subset $\mathcal S$ of $\FF\setminus\{0_\FF\}$, one can find  distinct $\xi,\eta\in \mathcal S$ with $$\alpha_{\varphi(\xi)}\cap \{1,...,k\}=\alpha_{\varphi(\eta)}\cap\{1,...,k\}$$ and, so, $\min (\alpha_{\varphi(\xi+\eta)})>k$. It follows that we can pick a sequence $(\xi_j)_{j\in\N}$  in $\FF\setminus\{0_\FF\}$  with the properties that for each $j\in\N$, $\max \alpha_{\xi_j}<\min \alpha_{\xi_{j+1}}$ and 
   $\max \alpha_{\varphi(\xi_j)}<\min \alpha_{\varphi(\xi_{j+1})}$. The result follows by letting $\psi$ be the only homomorphism satisfying $\psi(\mathbbm 1_{\{j\}})=\xi_j$ for each $j\in\N$. 
 \end{proof}
We now move to proving that  Conjecture \ref{0.Conjecture2} implies Conjecture \ref{0.Conjecture A}. Consider a VIP-system $v:\FF\rightarrow G$, an invertible probability measure preserving system $(X,\mathcal A,\mu,(T^g)_{g\in G})$, and any $A\in\mathcal A$.  For each $\xi\in\FF$, let $V_\xi=T^{v(\xi)}$ and set $f_A=\mathbbm 1_A-\mu(A)$. By Hindman's theorem (see Theorem \ref{0.thm:Hindman} above), we can find an injective homomorphism $\varphi:\FF\rightarrow \FF$ such that for every $f\in L^2(\mu)$, $\mathop{\text{{\rm IP-lim}}}_{\xi\in\FF} V_{\varphi(\xi)}f=Qf$ exists
(here we use the fact that when dealing with a sequence of the form $(\mu(A\cap T^g A))_{g\in G}$, one can always  assume that $L^2(\mu)$ is separable). 
Invoking Lemma \ref{B.lem:NiceInjectiveRestrictionForVIP} and  Remark \ref{0.VIPficationRemark}, we can assume without loss of generality that $(V_{\varphi(\xi)})_{\xi\in\FF}$ is a VIP-system. Thus, by  Conjecture \ref{0.Conjecture2},
 \begin{multline*}
\mathop{\text{{\rm IP-lim}}}_{\xi\in\FF} \mu(A\cap T^{-v(\varphi(\xi))}A)=\mathop{\text{{\rm IP-lim}}}_{\xi\in\FF}\langle (\mu(A)+f_A),T^{v(\varphi(\xi))}(\mu(A)+f_A) \rangle\\
=\mathop{\text{{\rm IP-lim}}}_{\xi\in\FF}\langle \mu(A),T^{v(\varphi(\xi))}\mu(A) \rangle+\mathop{\text{{\rm IP-lim}}}_{\xi\in\FF}\langle f_A,T^{v(\varphi(\xi))}f_A\rangle=\mu^2(A)+\langle f_A,Qf_A \rangle\geq \mu^2(A). 
 \end{multline*}
It follows that for every $\epsilon>0$ there are infinitely many non-zero $\xi\in\FF$ with $\mu(A\cap T^{-v(\xi)}A)>\mu^2(A)-\epsilon$ as guaranteed by Conjecture \ref{0.Conjecture A}.\\
To prove that Conjecture \ref{0.Conjecture A} implies Conjecture \ref{0.Conjecture2}, let $(V_\xi)_{\xi\in\FF}$ be a VIP-system of unitary operators acting on a Hilbert space $\mathcal H$ with the property that for every $f\in\mathcal H$ there is a $Qf\in\mathcal H$ with 
$$
\mathop{\text{{\rm IP-lim}}}_{\xi\in\FF} V_{\xi}f=Qf.
$$
Let $\mathcal V$ be the countable abelian group generated by the commuting family of unitary maps $V_\xi$, $\xi\in\FF$. Take $f\in\mathcal H$ with $\|f\|_\mathcal H=1$ and note that the sequence 
$$\delta_V:=\Re(\langle f, Vf\rangle),\,V\in\mathcal V$$
is a $[-1,1]$-valued positive definite sequence. By Lemma \ref{B.lem:PositiveDefinite}, there is an invertible probability preserving system $(X,\mathcal A,\mu,(T^V)_{V\in\mathcal V})$ and a set $A\in\mathcal A$ with $\mu(A)=1/2$ for which \eqref{B.eq:MeasureCorrelation} holds with $\lambda=1$. Let $k\in\N$.  Applying Conjecture \ref{0.Conjecture A} to the VIP-system $(V_\xi)_{\min \alpha_\xi>k}$, we see that for every $\epsilon\in(0,1/4)$, there is a non-zero $\xi\in\FF$ with $\min \alpha_\xi>k$  such that
$$\frac{1}{4}+\frac{\sin^{-1}(\delta_{V_{\xi}})}{2\pi}=\mu(A\cap T^{-V_{\xi}}A)>\frac{1}{4}-\epsilon,
$$
which implies that $\Re(\langle f,V_{\xi}f\rangle)=\delta_{V_{\xi}}>\sin(-2\pi\epsilon)$ and, so, $\Re(\langle f,Qf\rangle)\geq 0$. We are done. 
 \subsection{The proof of Proposition E}\label{B.sec:prooofOfprop}
 Fix $d\in\N$. Combining \cref{A.thm:EpsilonPDHJ} with  Proposition \ref{0.prop.StrongRec}, we obtain the existence of a constant $c_{d,1/2}>0$ for which formula \eqref{0.eq:DPHJLowerBound} holds.  
 Consider now a VIP-system $(V_\xi)_{\xi\in\FF}$ of degree at most $d$  taking values in an abelian group of unitary operators acting on a Hilbert space $\mathcal H$ and let $f\in\mathcal H$ be such that $\|f\|_\mathcal H=1$. As before, we let $\mathcal V$ denote the countable abelian group generated by the unitary maps $V_\xi$, $\xi\in\FF$, and note that  
$$\delta_V:=\Re(\langle f, Vf\rangle),\,V\in\mathcal V$$ 
is a $[-1,1]$-valued positive definite sequence. By  Lemma \ref{B.lem:PositiveDefinite}, there is an invertible probability preserving system $(X,\mathcal A,\mu,(T^V)_{V\in\mathcal V})$ and a set $A\in\mathcal A$ with $\mu(A)=1/2$ such that for every $\xi\in\FF$, 
$$
\frac{1}{4}+\frac{\sin^{-1}(\delta_{V_\xi})}{2\pi}=\mu(A\cap T^{-V_\xi}A).
$$
It now follows that for some non-zero $\xi\in\FF$, $\mu(A\cap T^{-V_\xi}A)\geq \frac{c_{d,1/2}}{2}$ and, so, 
$$
\Re(\langle f,V_\xi f\rangle)=\delta_{ V_\xi}\geq \sin(-\frac{\pi}{2}+\pi c_{d,1/2})=-\cos(\pi c_{d,1/2}).
$$
Setting $\epsilon_d:=1-\cos(\pi c_{d,1/2})$ and noting that $(V_\xi)_{\xi\in\FF}$ was arbitrary, we see that Remark \ref{0.VIPficationRemark} implies that whenever 
$$
\mathop{\text{{\rm IP$^*$-lim}}}_{\xi\in\FF}\;\langle f,W_{\varphi(\xi)}f\rangle
$$
exists for some vector $f$, some VIP-system $(W_\xi)_{\xi\in\FF}$ of degree at most $d$,  and some injective homomorphism $\varphi:\FF\rightarrow \FF$, one has that   
$$
\mathop{\text{{\rm IP$^*$-lim}}}_{\xi\in\FF}\;\Re(\langle f,W_{\varphi(\xi)}f\rangle)\geq \|f\|_\mathcal H^2(-1+\epsilon_d).
$$
We are done. 
\subsection{The proof of Lemma \ref{B.lem:PositiveDefinite}}
The proof of Lemma \ref{B.lem:PositiveDefinite} makes use of a class of probability measure preserving systems known as Gaussian systems. Given a countable, discrete abelian group $(G,+)$, we will let 
 $\mathcal B$ denote the Borel $\sigma$-algebra on $\R^G$ and for each $g\in G$, we will let $X_g:\R^G\rightarrow \R$ denote the canonical projection onto the $g$-th coordinate (i.e. $X_g(\omega)=\omega(g))$. For any real-valued  positive definite sequence $\delta:G\rightarrow \R$, we define the Gaussian measure associated with $\delta$ to be the unique probability measure $\gamma_\delta$ on $\mathcal B$ defined by the property that for any $N\in\N$, any Borel measurable sets $A_1,...,A_N\subseteq \R$, and any $g_1,...,g_N\in G$, one has that 
$$
\gamma_\delta(\bigcap_{j=1}^N X_{g_j}^{-1}A_j)=\mathbb P((\mathcal Z_1,...,\mathcal Z_N)\in \prod_{j=1}^NA_j),
$$
where the random variables $\mathcal Z_1,...,\mathcal Z_N$  have joint Gaussian distribution with covariance  matrix $\Sigma$ given by 
$$
\Sigma_{i,j}=\delta_{g_i-g_j},\,i,j\in\{1,...,N\}
$$
and for each $j\in\{1,...,N\}$, $\mathbb E(\mathcal Z_j)=0$.
Note that because $\delta$ is real-valued, the matrix $\Sigma$ is always symmetric.\\
Under the additional assumption that $\delta_{0_G}=1$, \cite[Lemma 2.2]{ZelIP0Khintchine2023} implies that for any $g\in G$,
\begin{equation}\label{B.eq:IntegralAtTimeg}
\gamma_\delta(X_{0_G}^{-1}[0,\infty)\cap X_g^{-1}[0,\infty))=\frac{1}{4}+\frac{\sin^{-1}(\delta_g)}{2\pi}.
\end{equation}
For each $g\in G$, denote the map $T^g:\R^G\rightarrow \R^G$ by $[T^g\omega](h)=\omega(h+g)$. The quadruple $(\R^G,\mathcal B,\gamma_\delta, (T^g)_{g\in G})$ forms an invertible probability preserving system known as the Gaussian system associated with $\delta$. See \cite{cornfeld1982ergodic},\cite{kechris2010Global},\cite{zelada2023GaussianETDS}, for example, for further discussion on Gaussian systems.
\begin{proof}[Proof of Lemma \ref{B.lem:PositiveDefinite}]
First assume that $(\delta_g)_{g\in G}$ is positive definite and consider the positive definite sequence $(\mathbbm 1_{\{0_G\}}(g))_{g\in G}$. Observe that for any $\lambda\in (0,1]$, the sequence 
$$\delta^{(\lambda)}_g:=\lambda \delta_g+(1-\lambda)\mathbbm 1_{\{0_G\}}(g),\, g\in G,$$
is positive definite. Let $(X_\lambda,\mathcal A_\lambda,\mu_\lambda, (T_\lambda^g)_{g\in G})$ be the Gaussian system associated with $\delta^{(\lambda)}$. Setting $A_\lambda=X_{0_G}^{-1}[0,\infty)$, we see that formula \eqref{B.eq:IntegralAtTimeg} implies that for any  $g\in G$,
$$
\mu_\lambda(A_\lambda\cap T_\lambda^{-g}A_\lambda)=\frac{1}{4}+\frac{\sin^{-1}(\delta^{(\lambda)}_g)}{2\pi}.
$$
Thus, for any $g\in G$,
\begin{multline*}
\frac{\sin^{-1}(\lambda \delta_g)}{2\pi}=\frac{\sin^{-1}(\delta^{(\lambda)}_g)}{2\pi}-\mathbbm 1_{\{0_G\}}(g)\frac{\pi-2\sin^{-1}(\lambda)}{4\pi}\\
=
\mu_\lambda(A_\lambda\cap T_\lambda^{-g}A_\lambda)-\frac{1}{4}-\mathbbm 1_{\{0_G\}}(g)\frac{\pi-2\sin^{-1}(\lambda)}{4\pi},
\end{multline*}
as claimed.\\
That $\Phi_g^{(\lambda)}=\frac{\sin^{-1}(\lambda\delta_g)}{2\pi}$, $g\in G$, is positive definite follows from the fact that $\sin^{-1}(x)=\sum_{n=1}^\infty a_nx^n$ for some sequence of non-negative numbers $(a_n)_{n\in\N}$ and   
Schur's product theorem (see \cite[Theorem 7.5.3]{HornJohnson-MatrixAnalysis-1985}, for example) which implies that for any positive definite sequence $(c_g)_{g\in G}$ and any $k\in\N$, $(c_g^k)_{g\in G}$ is also positive definite (the latter fact can also be proved by invoking Bochner's theorem and using the fact  that the Fourier transform of a convolution of two finite Borel measures is the product of their corresponding Fourier transforms).  
\\
For the opposite direction, simply note that for every $g\in G$,
$$\delta_g=\lim_{\lambda\rightarrow 0^+}\frac{\sin^{-1}(\lambda \delta_g)}{\lambda}.$$
We are done. 
\end{proof}
\section{ Equivalent forms of the Furstenberg-S{\'a}rk{\"o}zy theorem for $\FF$-valued polynomials}\label{C.Sec}
Our goal in this appendix is to prove the following result which establishes the equivalence of various results of combinatorial and dynamical nature (including the Furstenberg-S{\'a}rk{\"o}zy theorem for $\FF$-valued polynomials). We remark that item (i) in Theorem \ref{C.thm:EquivalentForms} is a weaker form of the Density Polynomial Hales-Jewett conjecture mentioned in \cite{GowersDPHJjBlog}. Notice that the special case of Proposition \ref{0.prop.StrongRec} for $\FF$-valued VIP-systems follows from (i)$\implies$(ii) below.
\begin{thm}\label{C.thm:EquivalentForms}
Let $d\in\N$ and $\delta\in (0,1)$. The following statements are equivalent:
\begin{enumerate}[(i)]
    \item There is an $r:=r_{d,\delta}\in\N$ with the property that for any $N\in\N$ with $N\geq r$ and any $\mathcal S\subseteq \mathcal P(\{1,...,N\}^d)$ with $|\mathcal S|\geq \delta 2^{N^d}$ one can find $E,F\in\mathcal S$ and a non-empty $\gamma\subseteq \{1,...,N\}$ such that $E\triangle F=\gamma^d$.
    \item There is an $r:=r_{d,\delta}\in\N$ with the property that for  any VIP-system $v:\mathcal F\rightarrow \FF$ of degree at most $d$, any invertible probability preserving system $(X,\mathcal A,\mu, (T^\xi)_{\xi\in\FF})$, any $A\in\mathcal A$ with $\mu(A)\geq\delta$,  and any non-empty pairwise disjoint $\alpha_1,...,\alpha_r\in\mathcal F$, one can find $1\leq i_1<\cdots<i_t\leq r$ such that 
    $$
\mu(A\cap T^{-v(\alpha_{i_1}\cup\cdots\cup \alpha_{i_t})}A)>\frac{1}{2^{r^d+r}}\frac{\mu(A)-\delta}{1-\delta}.
    $$
     \item There is  an $r:=r_{d,\delta}\in\N$ with the property that for  any infinite abelian group $G$, any polynomial  $p:G\rightarrow \FF$ satisfying $p(0_G)=0_\FF$ of degree at most $d$, any invertible probability preserving system $(X,\mathcal A,\mu, (T^\xi)_{\xi\in\FF})$, any $A\in\mathcal A$ with $\mu(A)\geq\delta$,  and any $g_1,...,g_r\in G$  one can find $1\leq i_1<\cdots<i_t\leq r$ such that 
    $$
\mu(A\cap T^{-p(g_{i_1}+\cdots+g_{i_t})}A)>\frac{1}{2^{r^d+r}}\frac{\mu(A)-\delta}{1-\delta}.
    $$
    So, in particular, the set $\{g\in G\,|\,\mu(A\cap T^{-p(g)}A)>\frac{1}{2^{r^d+r}}\frac{\mu(A)-\delta}{1-\delta}\}$ is syndetic.
    \item The polynomial $p_d:\FF\rightarrow \FF\cong\mathcal P_d\times \FF$ defined in Section \ref{2.Sec} (see \eqref{2.eq:Defnp_d})
    has the property that for any invertible probability preserving system $(X,\mathcal A,\mu, (T^\xi)_{\xi\in\FF})$ and any $A\in\mathcal A$ with $\mu(A)\geq\delta$, one can find infinitely many $\xi\in \FF$ such that 
    $$
\mu(A\cap T^{-p_d(\xi)}A)>0.
    $$
    \item The polynomial $q_d:\FF\rightarrow \FF\cong\mathcal P_d$ defined in \eqref{2.eq:Defnq_d}
    has the property that for any invertible probability preserving system $(X,\mathcal A,\mu, (T^\xi)_{\xi\in\FF})$ and any $A\in\mathcal A$ with $\mu(A)\geq\delta$, one can find infinitely many $\xi\in \FF$ such that 
    $$
\mu(A\cap T^{-q_d(\xi)}A)>0.
    $$
\end{enumerate}
\end{thm}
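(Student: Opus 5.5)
I would prove the cycle (i)$\Rightarrow$(ii)$\Rightarrow$(iii)$\Rightarrow$(iv)$\Rightarrow$(v)$\Rightarrow$(i).

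\emph{(i)$\Rightarrow$(ii).} Given pairwise disjoint $\alpha_1,\dots,\alpha_r$, write $\Sigma(\beta)=\bigcup_{i\in\beta}\alpha_i$ for $\beta\subseteq[r]$. Since $v$ is a VIP-system of degree at most $d$ with values in $\FF$ (characteristic $2$), $v(\Sigma(\beta))$ is determined by its degree-$\le d$ "Möbius coefficients" $w(\tau):=\sum_{\emptyset\neq\tau'\subseteq\tau}v(\Sigma(\tau'))$, $1\le|\tau|\le d$. This gives $v(\Sigma(\beta))=\sum_{\tau\subseteq\beta,1\le|\tau|\le d}w(\tau)$. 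Each $w(\tau)$ can be spread over the "ordered" cells $\phi(\tau)\subseteq[r]^d$, as in Appendix A. Define a homomorphism $\Psi:\mathcal P([r]^d)\to\FF$ by choosing, for each $\tau$, one point $x_\tau\in\phi(\tau)$ with $\Psi(\{x_\tau\})=w(\tau)$, and $\Psi=0$ on the remaining points. Then $\Psi(\beta^d)=v(\Sigma(\beta))$ for every $\beta$, because $\beta^d$ contains $x_\tau$ exactly when $\tau\subseteq\beta$.

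Now run the standard correspondence. Average over $E\in\mathcal P([r]^d)$ the set $T^{-\Psi(E)}A$. The family $\mathcal S_x=\{E: x\in T^{-\Psi(E)}A\}$ has expected size $\mu(A)2^{r^d}$. A Markov/averaging argument shows that the set of $x$ with $|\mathcal S_x|\ge\delta2^{r^d}$ has measure at least $\frac{\mu(A)-\delta}{1-\delta}$. For each such $x$, (i) yields $E,F\in\mathcal S_x$ and $\gamma$ with $E\triangle F=\gamma^d$, so $x\in T^{-\Psi(E)}A\cap T^{-\Psi(E)-v(\Sigma(\gamma))}A$. There are at most $2^{r^d}\cdot 2^r$ choices of $(E,\gamma)$, so pigeonholing over them and using measure preservation gives the bound $2^{-(r^d+r)}\frac{\mu(A)-\delta}{1-\delta}$.

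\emph{(ii)$\Rightarrow$(iii).} Reduce the $g_i$ to disjoint supports by setting $v(\alpha)=p(\sum_{i\in\alpha}g_i)$ for $\alpha\subseteq[r]$. By the computation in Subsection 1.5, this is a VIP-system of degree at most $d$, extended arbitrarily to an IP-polynomial. Syndeticity then follows from the remark after Conjecture 1.?: a set meeting every finite-sum set $\{g_{i_1}+\dots+g_{i_t}\}$ of length $r$ is syndetic.

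\emph{(iii)$\Rightarrow$(iv).} Apply (iii) to $p_d$ along $g_i$ with growing supports. This gives infinitely many distinct nonzero returns, since $p_d$ is injective on nonzero elements, which follows from inspecting the $x^{(1)}_i\cdots x^{(d)}_i$ monomials. Finitely many exceptions cannot cover the returns, because returns can be forced inside the subgroup of elements supported beyond any $k$.

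\emph{(iv)$\Rightarrow$(v).} Given a $\mathcal P_d$-action $T$, extend it to $\mathcal P_d\times\FF$ by letting the second factor act trivially. Then $T^{p_d(\xi)}=T^{q_d(\xi)}$.

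\emph{(v)$\Rightarrow$(i)} is the main obstacle; I would argue by contradiction and compactness. Suppose that for infinitely many $N$ there is $\mathcal S_N\subseteq\mathcal P([N]^d)$ of density $\ge\delta$ with no $E\triangle F=\gamma^d$. Define the system to be the Bernoulli-like shift on $\{0,1\}^{\mathcal P_d}$, where $\mathcal P_d\cong\mathcal P(\N^d)$ via monomials $x^{(1)}_{i_1}\cdots x^{(d)}_{i_d}\leftrightarrow(i_1,\dots,i_d)$. Note that $q_d(\xi)$ corresponds to $\alpha_\xi^d$. Build, by a Furstenberg correspondence/weak-$*$ limit of uniform measures on translates of $\mathcal S_N$ (or via a product over the coordinates $[N]^d$ embedded blockwise as in Section 2), an invariant measure and set $A$ of measure $\ge\delta$ such that $\mu(A\cap T^{-\gamma^d}A)=0$ for every $\gamma\subseteq[N]$ and every large $N$.

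The delicate point is to make the limit $\mathcal P_d$-invariant while retaining density, for all nonempty $\gamma$. I would first use a Hindman/diagonalization step, or simply restrict to "shifted" copies of $[N]$, so that only finitely many $\xi$ can give positive recurrence. To handle the requirement of infinitely many $\xi$, I would use the fact that (v) applied to the subgroups $\varphi(\FF)$ still holds, since $q_d\circ\varphi$ behaves similarly on block homomorphisms.
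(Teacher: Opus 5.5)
Your implications (i)$\Rightarrow$(ii)$\Rightarrow$(iii)$\Rightarrow$(iv)$\Rightarrow$(v) are sound and essentially those of the paper. There is one nice deviation. In (i)$\Rightarrow$(ii) you avoid the Bergelson--Leibman Lemma~\ref{C.lem:ShiftToSetPolynomial} by building $\Psi$ on $\mathcal P([r]^d)$ explicitly, placing the M\"obius coefficient $w(\tau)$ at a single point $x_\tau\in\phi(\tau)$. Since the coordinate set of $x_\tau$ is exactly $\tau$, you have $x_\tau\in\beta^d$ iff $\tau\subseteq\beta$, so $\Psi(\beta^d)=v(\bigcup_{i\in\beta}\alpha_i)$, which is all the averaging argument needs. (Injectivity of $p_d$ plays no role in (iii)$\Rightarrow$(iv), since (iv) counts $\xi$'s, not values.)

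The gap is in (v)$\Rightarrow$(i). You state the property the limiting system should have, call the key point ``delicate'', and leave it unresolved. The real difficulty is finitization, and your remedies do not reach it. (v) yields returns at some $\xi$, but these may all satisfy $\alpha_\xi\cap[N]=\emptyset$; then $q_d(\xi)$ acts trivially on $\mathcal P([N]^d)$ and says nothing about $\mathcal S$. Hindman's theorem is irrelevant here. Passing to $q_d\circ\varphi$ for block homomorphisms $\varphi$ is legitimate (pull back the action as in Section~\ref{6.Sec}), but it only relocates the returns to another infinite subgroup; it never confines them to a fixed window $[N]$.

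The paper gets the needed compactness from Forrest's Lemma~\ref{lem:forrest-ackelsberg}. Some finite set $\{q_d(\xi):\alpha_\xi\subseteq[M]\}$ is already a set of $\delta$-recurrence, and one applies it to the finite system $\{-1,1\}^{[N]^d}$, $N\ge M$, where $\Gamma$ acts by multiplication by $1-2\mathbbm 1_{\Gamma\cap[N]^d}$.

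Your correspondence route can also be completed, but the missing observations must be made. For a bad $\mathcal S_N$, let $\nu_N$ be the law of $\omega_E(\Gamma)=\mathbbm 1_{\mathcal S_N}(E\triangle(\Gamma\cap[N]^d))$, $\Gamma\in\mathcal P_d$, with $E$ uniform in $\mathcal P([N]^d)$. Because $\Gamma\mapsto\Gamma\cap[N]^d$ is a homomorphism, $\nu_N$ is exactly invariant under the whole $\mathcal P_d$-shift, so no invariance issue arises. For $A=\{\omega:\omega(\emptyset)=1\}$ one has $\nu_N(A)\ge\delta$. Since $\gamma^d\cap[N]^d=(\gamma\cap[N])^d$, also $\nu_N(A\cap T^{-\gamma^d}A)=0$ as soon as $\gamma\cap[N]\neq\emptyset$. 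These sets are clopen cylinders, so any weak-$*$ limit $\nu$ along the bad $N$'s satisfies $\nu(A)\ge\delta$ and $\nu(A\cap T^{-q_d(\xi)}A)=0$ for every $\xi\neq 0_\FF$, contradicting (v). Without this argument, or Forrest's lemma, the main implication is not proved.
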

The implication (iii)$\implies$(iv) is immediate. That (ii)$\implies$(iii) can be proved by choosing  a sequence $(h_k)_{k\in\N}$ in $G$ with the property that $h_j=g_j$ for each $j\in\{1,...,r\}$ and then defining the IP-polynomial $\alpha\mapsto p(h_\alpha)$, where $h_\alpha:=\sum_{j\in\alpha}h_j$ (the syndeticity of the set 
$$\{g\in G\,|\,\mu(A\cap T^{-p(g)}A)>\frac{1}{2^{r^d+r}}\frac{\mu(A)-\delta}{1-\delta}\}$$
follows from the fact that it has a non-trivial intersection with every set of the form $\{h_\alpha\,|\,\alpha\in\mathcal F\}$).
That (iv)$\implies$(v) follows by noting that any $\mu$-preserving  $\mathcal P_d$-action $(T^{\ \xi})_{\xi\in\mathcal P_d}$ can be  viewed as an action of $\mathcal P_d\times \FF$ by letting $S^{(\xi,\eta)}:=T^\xi$ for every $(\xi,\eta)\in \mathcal P_d\times \FF$. Thus, all that we need to show is that (i)$\implies$(ii) and that (v)$\implies$(i). 
\subsection{The proof of (i)$\implies$(ii)}
Our proof of (i)$\implies$(ii) is an easy modification of the proof of Theorem 6.15 in \cite{berMcCuIPPolySzemeredi}. We will need the following lemma which was originally utilized in \cite{BerLeibPolyHJ} (see \cite[Lemma A.2]{BerZel-NiceRecurrence} for a proof). For each $D\in\N$, $\mathcal F_\emptyset(\N^D)$ denotes the family of all  finite subsets of $\N^D$.
\begin{lem}\label{C.lem:ShiftToSetPolynomial}
    Let  $D\in\N$ and let $(G,+)$ be an abelian group. For any VIP-system $(g_\alpha)_{\alpha\in\mathcal F}$ in $G$ of degree at most $D$ there is a function $\eta:\mathcal F_\emptyset (\N^D)\rightarrow G$ with the following two properties:
    \begin{enumerate}
       \item [(P.1)] For any $\alpha\in\mathcal F$,    $$\eta(\alpha^D)=\eta(\underbrace{\alpha\times\cdots\times\alpha}_{D\text{ times}})=g_\alpha.$$
         \item [(P.2)] For any $E,F\in\mathcal F_\emptyset (\N^D)$ with $E\cap F=\emptyset$, $\eta(E\cup F)=\eta(E)+\eta(F)$.
    \end{enumerate}
\end{lem}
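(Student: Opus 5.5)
The plan is to build $\eta$ from a Möbius-type decomposition of the VIP-system into ``elementary pieces'' indexed by small sets, and then to place each piece on a single canonical point of $\N^D$. Extending $\eta$ additively from points to finite sets gives (P.2) automatically, so the real content is (P.1).

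\emph{Step 1 (Möbius inversion).} Extend $g$ by $g_\emptyset=0_G$ and, for every finite $\beta\subseteq\N$, define
\[
u(\beta):=\sum_{\gamma\subseteq\beta}(-1)^{|\beta|-|\gamma|}g_\gamma .
\]
Inclusion--exclusion on the Boolean lattice gives $g_\alpha=\sum_{\beta\subseteq\alpha}u(\beta)$ for every $\alpha\in\mathcal F$, and $u(\emptyset)=0_G$.

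\emph{Step 2 ($u$ vanishes on large sets).} I claim $u(\beta)=0_G$ whenever $|\beta|\geq D+1$, and I would prove it by induction on $|\beta|$. Fix such a $\beta$ and write it as a disjoint union $\alpha_0\cup\cdots\cup\alpha_D$ of non-empty sets; for instance, take $D$ singletons and let $\alpha_D$ be the remainder. Substitute $g_\gamma=\sum_{\rho\subseteq\gamma}u(\rho)$ into the defining identity \eqref{0.eq:DefnVIP}, which holds because $\deg\le D$. The coefficient of a given $u(\rho)$ is, up to a global sign, $\sum_{\emptyset\neq S\supseteq\{j:\rho\cap\alpha_j\neq\emptyset\}}(-1)^{|S|}$. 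This vanishes unless $\rho$ meets every $\alpha_j$ (for $\rho=\emptyset$ the sum over all non-empty $S\subseteq\{0,\dots,D\}$ is $-1$, but $u(\emptyset)=0_G$). So \eqref{0.eq:DefnVIP} becomes
\[
\pm\sum_{\substack{\rho\subseteq\beta\\ \rho\cap\alpha_j\neq\emptyset\ \forall j}}u(\rho)=0_G .
\]
Every such $\rho$ has $|\rho|\geq D+1$. Those with $\rho\subsetneq\beta$ vanish by the induction hypothesis, and the base case $|\beta|=D+1$ has no such proper subsets. Hence $u(\beta)=0_G$. Consequently
\[
g_\alpha=\sum_{\beta\subseteq\alpha,\ 1\le|\beta|\le D}u(\beta).
\]
This bookkeeping of the alternating sum is the only step needing care, and I expect it to be the main (mild) obstacle.

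\emph{Step 3 (placing the pieces).} For each non-empty $\beta=\{b_1<\cdots<b_k\}$ with $k\le D$, let
\[
c(\beta):=(b_1,\dots,b_k,b_k,\dots,b_k)\in\N^D,
\]
padding with the largest element. The map $\beta\mapsto c(\beta)$ is injective, since the set of coordinates of $c(\beta)$ is $\beta$. Define $\eta(\{c(\beta)\})=u(\beta)$, set $\eta(\{x\})=0_G$ for points $x$ not of this form, and put $\eta(E)=\sum_{x\in E}\eta(\{x\})$ for finite $E\subseteq\N^D$. Then (P.2) is immediate.

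\emph{Step 4 (checking (P.1)).} For $\alpha\in\mathcal F$, a point $c(\beta)$ lies in $\alpha^D$ iff all its coordinates lie in $\alpha$, i.e.\ iff $\beta\subseteq\alpha$. Hence
\[
\eta(\alpha^D)=\sum_{\beta\subseteq\alpha,\ 1\le|\beta|\le D}u(\beta)=g_\alpha
\]
by Step 2, which is (P.1).
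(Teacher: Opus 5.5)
Your proof is correct and complete. The coefficient computation in Step 2 is right: for $\rho\neq\emptyset$, $\sum_{S\supseteq J(\rho)}(-1)^{|S|}$ vanishes unless $J(\rho)=\{0,\dots,D\}$, and $u(\emptyset)=0_G$ absorbs the $\rho=\emptyset$ term. Strong induction on $|\beta|$ then gives $u(\beta)=0_G$ for $|\beta|\geq D+1$. Since the coordinate set of $c(\beta)$ is exactly $\beta$, the map $c$ is injective and $c(\beta)\in\alpha^D$ iff $\beta\subseteq\alpha$; this yields (P.1), and (P.2) holds by construction.

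The paper does not prove this lemma itself; it only points to \cite[Lemma A.2]{BerZel-NiceRecurrence} and \cite{BerLeibPolyHJ}. So there is no internal proof to compare against, and your argument is a self-contained version of what is presumably the standard route. That route first writes a VIP-system of degree at most $D$ as $g_\alpha=\sum_{\beta\subseteq\alpha,\,1\leq|\beta|\leq D}u(\beta)$. It then places each $u(\beta)$ at a single point of $\beta^D$ that uses every element of $\beta$.

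The only point left implicit is that \eqref{0.eq:DefnVIP} is postulated for $t=\deg v$, not for $t=D$. You can close this in either of two ways. One is to observe that the identity for $t$ implies it for $t+1$. The other is to run Step 2 with $t=\deg v$ in place of $D$, which gives the stronger vanishing $u(\beta)=0_G$ for $|\beta|>\deg v$.
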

\begin{proof}[Proof of (i)$\implies$(ii) in Theorem \ref{C.thm:EquivalentForms}]
Let $r:=r_{d,\delta}$ be as in the statement of (i).
Fix a VIP-system $v:\mathcal F\rightarrow\FF$ of degree at most $d$, let $\alpha_1,...,\alpha_r$ be non-empty, finite pairwise disjoint subsets of $\N$, let $(X,\mathcal A,\mu,(T^\xi)_{\xi\in\FF})$ be an invertible probability preserving system, and let $A\in\mathcal A$ be such that $\mu(A)\geq \delta$. If $\mu(A)=1$, then there is nothing to prove. Thus assume $\mu(A)\in (0,1)$.\\
By \cref{C.lem:ShiftToSetPolynomial}, there is a map $\eta:\mathcal F_\emptyset (\N^d)\rightarrow \FF$ with the properties that for every $\alpha\in\mathcal F$, $v(\alpha)=\eta(\alpha^d)$ and for any disjoint $E,F\in \mathcal F_\emptyset(\N^d)$, $\eta(E\cup F)=\eta(E)+\eta(F)$. For any $\beta\subseteq \{1,...,r\}^d$, let 
$$
\varphi(\beta)=\prod_{(j_1,...,j_d)\in\beta}T^{-\eta(\alpha_{j_1}\times\cdots\times \alpha_{j_d})}.
$$
Let 
$$\mathcal M=\{\beta\,|\,\beta\subseteq \{1,...,r\}^d\}.$$
We define the function $f:X\rightarrow [0,1]$ by
$$f(x)=\frac{1}{|\mathcal M|}\sum_{\beta\in\mathcal M} \mathbbm 1_{\varphi(\beta)A}(x).$$
For each $x\in X$, we set 
$$\mathcal S_x=\{\beta\in\mathcal M\,|\,\mathbbm 1_{\varphi(\beta)A}(x)=1\}.$$
Note that $|\mathcal S_x|=|\mathcal M|f(x)$. Let $Y:=\{x\in X\,|\,f(x)\geq\delta\}$. For any $x\in Y$,  $\frac{|\mathcal S_x|}{|\mathcal M|}\geq \delta$ and, so, by (i), there exist $E_{x},F_x\in\mathcal S_x$
and a non-empty $\gamma_x\subseteq \{1,...,r\}$ such that $E_x\triangle F_x=\gamma_x^d$. It follows that 
\begin{equation}\label{C.eq:UpperBoundForY}
Y\subseteq\bigcup_{E\in \mathcal M}\bigcup_{\gamma\subseteq \{1,...,r\},\,\gamma\neq\emptyset}\bigcup_{F\in\mathcal M,\,E\triangle F=\gamma^d}\{x\in X\,|\,\mathbbm 1_{\varphi(E)A}(x)\mathbbm 1_{\varphi(F)A}(x)=1\}.
\end{equation}
Noting that in the right-hand side of \eqref{C.eq:UpperBoundForY} there are no more than $2^r$ choices for $\gamma$ and, once $\gamma$ is selected, there are no more than  $2^{r^d}$ choices for the pair $(E,F)$, we see that for some
 $E_0,F_0\in \mathcal M$ and a non-empty $\gamma_0\subseteq \{1,...,r\}$ with $E_0\triangle F_0=\gamma_0^d$, the set 
 $$Z=\{x\in X\,|\,\mathbbm 1_{\varphi(E_0)A}(x)\mathbbm 1_{\varphi(F_0)A}(x)=1\}$$
 satisfies $\frac{\mu(Y)}{2^{r^d+r}}\leq \mu(Z)$.\\ 
To conclude the proof, note that since $\int_Xf(x)\text{d}
\mu(x)=\mu(A)$ and $0\leq f(x)\leq 1$ for each $x\in X$, we must have
$$\delta\leq\mu(A)=\int_X f(x)\text{d}\mu(x)<\mu(Y)+\delta(1-\mu(Y))$$
which yields 
$$
\mu(Y)> \frac{\mu(A)- \delta}{1-\delta}.
$$
 Thus, 
 \begin{multline*}
     \mu(A\cap T^{-v(\bigcup_{j\in\gamma_0}\alpha_j)}A)
     =\mu(A\cap T^{-\eta((\bigcup_{j\in\gamma_0}\alpha_j)^d)}A)
=\mu(\varphi(E_0\setminus F_0)A\cap \varphi(F_0\setminus E_0)A)\\
     =\mu(\varphi(E_0\cap F_0)\Big(\varphi(E_0\setminus F_0)A\cap \varphi(F_0\setminus E_0)A\Big))=\mu(\varphi(E_0)A\cap \varphi(F_0)A)\\
     \geq \mu(Z)> \frac{1}{2^{ r^d+r}}\frac{
     \mu(A)-\delta}{1-\delta}.
 \end{multline*} 
 We are done. 
\end{proof}
\subsection{The proof of (v)$\implies$(i)}
Our proof of the implication (v)$\implies$(i) in Theorem \ref{C.thm:EquivalentForms} makes use of the following variant of a result due to Forrest \cite{forrest1990recurrence}. For any countable abelian group $G$ and any $\delta\in (0,1)$, we say that a set $R\subseteq G$ is a set of $\delta$-recurrence if for any invertible probability measure preserving system $(X,\mathcal A,\mu,(T^g)_{g\in G})$ and any $A\in\mathcal A$ with $\mu(A)\geq \delta$, one has that there is a non-zero $g\in R$ with 
$$\mu(A\cap T^{-g}A)>0.$$
\begin{lem}[Cf. Lemma 6.4 in \cite{forrest1990recurrence}] \label{lem:forrest-ackelsberg}
Let $(G,+)$ be a countable abelian group, let $R\subseteq G$, and let $\delta\in(0,1)$. If $R$ is a set of $\delta$-recurrence, then there is a finite subset $R_\delta\subseteq R$ which is a set of $\delta$-recurrence.
\end{lem}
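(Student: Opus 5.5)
We argue by contradiction and compactness: if no finite subset of $R$ is a set of $\delta$-recurrence, then gluing together counterexamples for each finite subset should produce a counterexample for all of $R$. Enumerate $R\setminus\{0_G\}=\{r_1,r_2,\dots\}$ and put $R_n=\{r_1,\dots,r_n\}$. If the conclusion fails, then for every $n$ there are an invertible system $(X_n,\mathcal A_n,\mu_n,(T_n^g)_{g\in G})$ and a set $A_n$ with $\mu_n(A_n)\geq\delta$ such that $\mu_n(A_n\cap T_n^{-g}A_n)=0$ for every $g\in R_n$.

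We encode each such pair by a point of the compact space $\{0,1\}^G$ and pass to a limit. Consider the shift $(S^g\omega)(h)=\omega(h+g)$ on $\Omega=\{0,1\}^G$ and the coding map $\pi_n:X_n\to\Omega$, $\pi_n(x)(h)=\mathbbm 1_{A_n}(T_n^hx)$. This map is equivariant, so $\nu_n:=(\pi_n)_*\mu_n$ is an $S$-invariant Borel probability measure on $\Omega$. Let $C=\{\omega\,|\,\omega(0_G)=1\}$, which is clopen. By construction $\nu_n(C)=\mu_n(A_n)\geq\delta$, and for every $g\in R_n$,
$$\nu_n(C\cap S^{-g}C)=\mu_n(A_n\cap T_n^{-g}A_n)=0.$$
Since $\Omega$ is compact metrizable, a subsequence converges weak$^*$ to some $\nu$, and $\nu$ is again $S$-invariant. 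Because $C$ and every $C\cap S^{-g}C$ are clopen, their indicator functions are continuous, so the weak$^*$ limit preserves their measures. Hence $\nu(C)\geq\delta$, and for each fixed $g=r_k$ we have $\nu_n(C\cap S^{-g}C)=0$ for all $n\geq k$, so $\nu(C\cap S^{-g}C)=0$.

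The limit system $(\Omega,\mathcal B,\nu,(S^g)_{g\in G})$ is invertible, since each $S^g$ is a homeomorphism. The set $C$ then has $\nu(C)\geq\delta$ and satisfies $\nu(C\cap S^{-g}C)=0$ for every non-zero $g\in R$. This contradicts the assumption that $R$ is a set of $\delta$-recurrence, so some $R_n$, a finite subset of $R$, is already a set of $\delta$-recurrence.

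The only delicate point is that the measures are passed to the limit on clopen sets, so weak$^*$ convergence gives equality of their measures rather than merely the one-sided inequalities coming from the portmanteau theorem; this is what the coding into $\{0,1\}^G$ secures. The remaining steps are routine.
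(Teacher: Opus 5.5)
Your proof is correct. The paper gives no argument of its own here: it only asserts that the proof of Forrest's Lemma 6.4 (stated for ordinary sets of recurrence) still works when $\delta$ is fixed, and leaves the check to the reader. Your symbolic coding into $\{0,1\}^G$ followed by a weak$^*$ limit is a self-contained proof. It also shows why fixing $\delta$ causes no trouble.
\begin{itemize}
\item $\delta$ enters only through $\nu_n(C)\geq\delta$. This survives the limit because $C$ is closed.
\item The vanishing $\nu(C\cap S^{-g}C)=0$ survives because $C\cap S^{-g}C$ is open.
\end{itemize}
So clopen cylinders are exactly what is needed.

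A bonus of your route: rerun it with $\max_{g\in R_n}\mu_n(A_n\cap T_n^{-g}A_n)<1/n$ in place of $=0$. This gives a stronger, uniform version: there are a finite $R_\delta\subseteq R$ and a $c_\delta>0$ with $\max_{g\in R_\delta}\mu(A\cap T^{-g}A)\geq c_\delta$ whenever $\mu(A)\geq\delta$.

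Two small points are worth stating explicitly.
\begin{itemize}
\item If $R\setminus\{0_G\}$ is finite, the statement is trivial.
\item Countability of $G$ makes $\{0,1\}^G$ compact metrizable, so sequential compactness of the space of Borel probability measures is available.
\end{itemize}
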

\begin{proof}[Proof of Lemma \ref{lem:forrest-ackelsberg}]
 By analyzing the  proof of  \cite[Lemma 6.4]{forrest1990recurrence}, one sees that for every $\delta\in (0,1)$, the conclusion of  \cite[Lemma 6.4]{forrest1990recurrence} still holds when one assumes only that $R$ is a set of $\delta$-recurrence as defined in this paper. Thus,  Lemma \ref{lem:forrest-ackelsberg} is an immediate consequence of the proof of \cite[Lemma 6.4]{forrest1990recurrence}.
\end{proof}
\begin{proof}[Proof of (v)$\implies$(i) in Theorem \ref{C.thm:EquivalentForms}]
For each $N\in\N$, we identify the set $\mathcal P(\{1,...,N\}^d)$ with the set of functions $X_N:=\{-1,1\}^{N^d}$ via the map $A\in \mathcal P(\{1,...,N\}^d)\mapsto (1-2\mathbbm 1_A)\in X_N$ and let $\mu_N$ denote the normalized counting measure on $X_N$. We also identify $\mathcal P_d$ with the class of all finite subsets of $\N^d$ (which we denote by $\mathcal F_\emptyset(\N^d)$) and for each $N\in\N$ define the $\mu_N$-preserving $\FF$-action $(T^{\Gamma})_{\Gamma\in \mathcal F_\emptyset(\N^d)}$ by 
$$T^\Gamma (1-2\mathbbm 1_A)=(1-2\mathbbm 1_{\Gamma\cap \{1,...,N\}^d})(1-2\mathbbm 1_A).$$
Observe that under the identification $\mathcal P_d\cong \mathcal F_\emptyset(\N^d)$ one has that for any $\xi\in\FF$, $q_d(\xi)=\alpha_\xi^d$. Thus, since  $q_d(\FF)$  is a set of $\delta$-recurrence, Lemma \ref{lem:forrest-ackelsberg} implies that for some $M\in\N$, the set $\{q_d(\xi)\,|\,\alpha_\xi\subseteq \{1,...,M\}\}$ is a set of $\delta$-recurrence. 
Thus, for $N\geq M$ and $\mathcal S\subseteq X_N$ with $\mu_N(\mathcal S)\geq \delta$, one has that there is a non-zero $\xi\in\FF$ with $\alpha_\xi\subseteq \{1,...,M\}$ and $\mu_N(\mathcal S\cap T^{-q_d(\xi)}\mathcal S)>0$. Thus, for some $E,F\in \mathcal S$, 
$$(1-2\mathbbm 1_{\alpha_\xi^d})(1-2\mathbbm 1_E)=(1-2\mathbbm 1_F),$$ 
which yields 
$$(1-2\mathbbm 1_{\alpha_\xi^d})=(1-2\mathbbm 1_E)(1-2\mathbbm 1_F).$$
 We are done. 
\end{proof}
 \bibliography{Bib.bib}

@article {ABBLargeIntersection2021,
    AUTHOR = {Ackelsberg, E. and Bergelson, V. and Best, A.},
     TITLE = {Multiple recurrence and large intersections for abelian group
              actions},
  JOURNAL = {Discrete Analysis},
      YEAR = {2021},
}

@article{AckBer2025RingsOfIntegers,
  title={Polynomial actions of rings of integers of global fields and quasirandomness of {P}aley-type graphs},
  author={Ackelsberg, E. and Bergelson, V.},
  journal={arXiv preprint arXiv:2509.17868},
  year={2025}
}

@inproceedings{ERTaU,
  title={Ergodic {R}amsey theory---{A}n {U}pdate},
  author={Bergelson, V.},
  booktitle={Ergodic {T}heory of $\mathbb{Z}^d$-{A}ctions},
   pages={1--61},
  organization={London Mathematical Society Lecture Note Series}, 
  volume={228},
  publisher={Cambridge University Press},
  address={Warwick, 1993-1994},
  year={1996}
}

@article{MultifariousPoincare,
  title={The Multifarious {P}oincar{\'e} Recurrence Theorem},
  author={Bergelson, V.},
  journal={Descriptive Set Theory and Dynamical Systems},
  pages={31--57},
  year={2000},
  publisher={Cambridge University Press, New York}
}

@article{BFM,
  title={{IP}\text{-}Sets and Polynomial Recurrence},
  author={Bergelson, V. and Furstenberg, H. and McCutcheon, R.},
  journal={Ergodic Theory and Dynamical Systems},
  volume={16},
  number={5},
  pages={963--974},
  year={1996},
  publisher={Cambridge University Press}
}

@article {BHKNilSystems2005,
    AUTHOR = {Bergelson, V. and Host, B. and Kra, B.},
     TITLE = {Multiple recurrence and nilsequences},
      NOTE = {With an appendix by Imre Ruzsa},
  JOURNAL = {Inventiones Mathematicae},
    VOLUME = {160},
      YEAR = {2005},
    NUMBER = {2},
     PAGES = {261--303}
}

@article{BerLeibPolyHJ,
    AUTHOR = {Bergelson, V. and Leibman, A.},
     TITLE = {Set-polynomials and polynomial extension of the
              {H}ales-{J}ewett theorem},
  JOURNAL = {Annals of Mathematics. Second Series},
    VOLUME = {150},
      YEAR = {1999},
    NUMBER = {1},
     PAGES = {33--75},
}

@article{BHM,
  title={{I}{P}\text{-}Systems, Generalized Polynomials and Recurrence},
  author={Bergelson, V. and Knutson H{\aa}land, I. and McCutcheon, R.},
  journal={Ergodic Theory and Dynamical Systems},
  volume={26},
  number={4},
  pages={999--1019},
  year={2006},
  publisher={Cambridge University Press},
}

@article{berMcCuIPPolySzemeredi,
  title={An ergodic {I}{P} polynomial {S}zemer{\'e}di theorem},
  author={Bergelson, V. and McCutcheon, R.},
  journal={Memoirs of the American Mathematical Society},
  volume={146},
  number={695},
  year={2000},
  note={106 pp.},
  publisher={American Mathematical Society}
}

@article{BDonaldRobertsonIP_r,
  title={Polynomial recurrence with large intersection over countable fields},
  author={Bergelson, V. and Robertson, D.},
  journal={Israel Journal of Mathematics},
  volume={214},
  number={1},
  pages={109--120},
  year={2016},
 publisher={Springer},
}

@article {BerRos1988-MixingForGroups,
    AUTHOR = {Bergelson, V. and Rosenblatt, J.},
     TITLE = {Mixing actions of groups},
  JOURNAL = {Illinois Journal of Mathematics},
    VOLUME = {32},
      YEAR = {1988},
    NUMBER = {1},
     PAGES = {65--80},
}

@article{BerGor2005-WeakMixing,
  title={Weakly mixing group actions: a brief survey and an example},
  author={Bergelson, V. and Gorodnik, A.},
  journal={arXiv preprint math/0505025},
  year={2005}
}

@book{cornfeld1982ergodic,
  title={Ergodic theory},
  author={Cornfeld, I. P.  and Fomin, S. V. and Sinai, Ya. G.},
  series={{G}rundlehren der {M}athematischen {W}issenschaften}, 
  volume={245},
  year={1982},
  publisher={Springer}
}

@book{forrest1990recurrence,
  title={Recurrence in dynamical systems: a combinatorial approach},
  author={Forrest, A. H.},
  year={1990},
  publisher={The Ohio State University}
}

@book{FBook,
   title={Recurrence in ergodic theory and combinatorial number theory},
  author={Furstenberg, H.},
  year={1981},
  publisher={Princeton University Press},
  pages={xi+203}
}

@article{FKIPSzemerediLong,
  title={An ergodic {S}zemer{\'e}di theorem for {I}{P}-systems and combinatorial theory},
  author={Furstenberg, H. and Katznelson, Y.},
  journal={Journal d'Analyse Math{\'e}matique},
  volume={45},
  pages={117--168},
  year={1985},
  publisher={Springer}
}

@article {FurKatzDHJ,
    AUTHOR = {Furstenberg, H. and Katznelson, Y.},
     TITLE = {A density version of the {H}ales-{J}ewett theorem},
  JOURNAL = {Journal d'Analyse Math\'ematique},
    VOLUME = {57},
      YEAR = {1991},
     PAGES = {64--119},
}

@misc{GowersDPHJjBlog,
    author={Gowers, T.},
  title = {The first unknown case of polynomial {D}{H}{J}},
  howpublished = {\url{https://gowers.wordpress.com/2009/11/14/the-first-unknown-case-of-polynomial-dhj/}},
  note = {Accessed: 2023-10-15}
}

@article {HalesJewett1963,
    AUTHOR = {Hales, A. W. and Jewett, R. I.},
     TITLE = {Regularity and positional games},
  JOURNAL = {Transactions of the American Mathematical Society},
    VOLUME = {106},
      YEAR = {1963},
     PAGES = {222--229},
}

@article{HIPPartitionRegular,
  title={Finite sums from sequences within cells of a partition of $\mathbb{N}$},
  author={Hindman, N.},
  journal={Journal of Combinatorial Theory, Series A},
  volume={17},
  pages={1--11},
  year={1974},
  publisher={Elsevier}
}

@book{HornJohnson-MatrixAnalysis-1985, 
title={Matrix Analysis}, 
publisher={Cambridge University Press}, 
author={Horn, R. A. and Johnson, C. R.}, 
year={1985},
}

@article {KamaeFranceNiceRec1978,
    AUTHOR = {Kamae, T. and Mend{\`e}s France, M.},
     TITLE = {Van der {C}orput's difference theorem},
  JOURNAL = {Israel Journal of Mathematics},
    VOLUME = {31},
      YEAR = {1978},
    NUMBER = {3-4},
     PAGES = {335--342},
}

@article {MR5029922,
    AUTHOR = {Karam, T.},
     TITLE = {Unions of intervals in codes based on powers of sets},
  JOURNAL = {European Journal of Combinatorics},
    VOLUME = {134},
      YEAR = {2026},
     PAGES = {Paper No. 104350, 10},
}

@book{kechris2010Global,
  title={Global aspects of ergodic group actions},
  author={Kechris, A. S.},
  year={2010},
  publisher={American Mathematical Society}
}

@article{leibman1998polynomial,
  title={Polynomial sequences in groups},
  author={Leibman, A.},
  journal={Journal of Algebra},
  volume={201},
  number={1},
  pages={189--206},
  year={1998},
}

@article{mccutcheon2005fvip,
  title={{F}{V}{I}{P} systems and multiple recurrence},
  author={McCutcheon, R.},
  journal={Israel Journal of Mathematics},
  volume={146},
  pages={157--188},
  year={2005},
  publisher={Springer}
}

@article{McCutWSarkozy2014,
  title={{D} sets and a {S}{\'a}rk{\"o}zy theorem for countable fields},
  author={McCutcheon, R. and Windsor, A.},
  journal={Israel Journal of Mathematics},
  volume={201},
  number={1},
  pages={123--146},
  year={2014},
  publisher={Springer}
}

@article{sarkozy1978difference,
  title={On difference sets of sequences of integers. {I}{I}{I}},
  author={S{\'a}rk{\" o}zy, A.},
  journal={Acta Mathematica Hungarica},
  volume={31},
  number={1-2},
  pages={125--149},
  year={1978},
  publisher={Akad{\'e}miai Kiad{\'o}, co-published with Springer Science+ Business Media BV~…}
}

@book{waltersIntroduction,
  title={An introduction to ergodic theory},
  author={Walters, P.},
  Series={Graduate Texts in Mathematics},
  volume={79},
  year={1982},
  publisher={Springer}
}

@article{BerZel-NiceRecurrence,
  title={Sets of large values of polynomial multi-correlation functions},
  author={Bergelson, V. and Zelada, R.},
  journal={arXiv:2605.23050},
  year={2026}
}

@article{zelada2023GaussianETDS,
    title={Mixing and rigidity along asymptotically linearly independent sequences},
  author={Zelada, R.},
  journal={Ergodic Theory and Dynamical Systems},
  volume={43},
  number={10},
  pages={3506--3537},
  year={2023},
}

@article{ZelIP0Khintchine2023,
  title={Failure of {K}hintchine-type results along the polynomial image of {I}{P}$_0$ sets},
  author={Zelada, R.},
   journal={Discrete and Continuous Dynamical Systems},
  volume={44},
  number={5},
  pages={1475--1494},
  year={2024},
}

@article {ZorinIPNilpotentSz,
    AUTHOR = {Zorin-Kranich, P.},
     TITLE = {A nilpotent {IP} polynomial multiple recurrence theorem},
   JOURNAL = {Journal d'Analyse Math{\'e}matique},
    VOLUME = {123},
      YEAR = {2014},
     PAGES = {183--225},
}
\bibliographystyle{amsplain}
\end{document}